\newtheorem{theorem}{Theorem}[section]
\newtheorem{lemma}[theorem]{Lemma}
\newtheorem{proposition}[theorem]{Proposition}
\newtheorem{corollary}[theorem]{Corollary}
\newtheorem*{theorem*}{Theorem}
\newtheorem{remark}[theorem]{Remark}
\newtheorem{definition}[theorem]{Definition}
\newtheorem{example}[theorem]{Example}
\def\as{\hbox{\rm a.s.{ }}}
\numberwithin{equation}{section}
\newcommand{\E}{\mathbb{E}}
\newcommand{\R}{\mathbb{R}}
\newcommand{\Ff}{\mathbb{F}}
\newcommand{\U}{\mathcal{U}}
\newcommand{\V}{\mathcal{V}}
\newcommand{\C}{\mathcal{C}}
\newcommand{\D}{\mathcal{D}}
\newcommand{\F}{\mathcal{F}}
\newcommand{\Om}{\Omega}
\newcommand{\Prob}{\mathbb{\Prob}}
\newcommand{\mytilde}{\raise.17ex\hbox{$\scriptstyle\mathtt{\sim}$}}
\def\b{\beta}                                    
\def\g{\gamma}                                   
\def\d{\delta}             
\def\ep{\varepsilon}       
\def\s{\sigma}                    
\def\t{\theta}                    
\def\r{\varrho}                                   
    \def\z{\zeta}                  
\def\l{\lambda}
\def\e{\eta}
\def\wt{\widetilde}    \def\wh{\widehat}
\def\ms{\medskip} 
\def\no{\noindent}
\begin{document}

\title[Mean-field risk sensitive control and zero-sum games]{ Mean-field risk sensitive control and zero-sum games for Markov chains}

\author{Salah Eddine Choutri and Boualem Djehiche}

\address{Department of Mathematics \\ KTH Royal Institute of Technology \\ 100 44, Stockholm \\ Sweden}
\email{choutri@kth.se, boualem@math.kth.se}


\date{ This version \today}

\subjclass[2010]{60H10, 60H07, 49N90}

\keywords{mean-field, nonlinear Markov chain, backward SDE, entropy, optimal control, risk sensitive, zero-sum game,  saddle-point}

\begin{abstract}
We establish existence of controlled Markov chain of mean-field type
with unbounded jump intensities by means of a fixed point argument
using the Wasserstein distance. Using a Markov chain entropic backward SDE approach, we further
suggest conditions for existence of an optimal control and a saddle-point
for respectively a control problem and a zero-sum differential game associated
with risk sensitive payoff functionals of mean-field type.
\end{abstract}

\maketitle

\tableofcontents
\section{Introduction}

In this paper we study existence of optimal controls and saddle-points of zero-sum games associated with Markov chains of mean-field type (a.k.a.\@ nonlinear Markov chains). These are  pure jump processes with a discrete state space whose jump intensities further depend on the marginal law of the process. The modeling power of the nonlinear Markov chain in biology, chemistry, economics, physics and communication networks etc.\@ is well documented in the literature, see e.g. \cite{Sch, NP, Chen, Kolo, Oel, DZ, FZ, Fe, Leo1, Leo2, DK, DS}, due to the fact that it is the limit of a system of pure jump processes with mean-field interaction, when the system's size tends to infinity. Its marginal law which satisfies a 'nonlinear' Fokker-Planck equation called the {\it McKean-Vlasov equation} represents the law of a typical trajectory in the underlying collection of interacting jump processes. Optimal controls and games based on the nonlinear Markov chain should give an insight into the effect of the design of control and game strategies for large systems of interacting jump processes.

In this paper, we derived conditions for existence of an optimal control and a saddle-point for respectively a control problem and a zero-sum differential game for nonlinear Markov chains associated with  performance functionals of risk sensitive type. These payoff functionals are obtained by exponentiating the stage-additive performance functional before expectation. Jacobson \cite{Jacobson} was first to show that the risk sensitive  payoff functional is a plausible way to capture  risk-averse and risk-seeking behaviors, that cannot be captured by the risk-neutral performance functional.  
 
 Given a control process $u$ from a suitable class $\U$ of admissible controls, with values in some compact metric space $(U,\delta)$, we consider a controlled probability measure $P^u$ under which $x$ is a pure jump process whose jump intensity from state $i$ to state $j$ at time $t$ is of the functional and mean-field type form $\l_{ij}(t,x,P^u\circ x_t^{-1},u_t)$, where by functional we mean its  dependence on the whole path $x$ and by mean-field type its dependence on  $P^u\circ x_t^{-1}$, the marginal probability distribution of $x_t$ under the probability measure $P^u$, provided it is predictable.  The risk sensitive payoff functional  $J(u),\,\, u\in\U$, associated with the controlled nonlinear Markov chain is 
\begin{equation*}
J(u):=E^u\left[\exp{\left(\int_0^T f(t,x,P^u\circ x_t^{-1},u_t)dt+ h(x_T,P^u\circ x_T^{-1})\right)}\right],
\end{equation*}
where $E^u$ denotes the expectation w.r.t. $P^u$. Any admissible control $u^*$ satisfying
\begin{equation}\label{opt-J}
J(u^*)=\min_{u\in\U}J(u)
\end{equation}
is called optimal control. We want to show existence of such an optimal control. We also consider a mean-field  risk-sensitive zero-sum game between two players.

In \cite{choutri1} a solution to this type of control and zero-sum game problems associated with risk-neutral mean-field payoff functionals was derived, where existence and uniqueness of the underlying mean-field chain were established using a fixed-point argument based on the Girsanov transform and the Csisz{\'a}r-Kullback-Pinsker inequality between the total variation (TV) distance and the entropy (Hellinger) distance, which required that the jump intensities are bounded from below by a strict positive constant. Since TV does not guarantee existence of finite moments, mean-field couplings of the type $E^u\left[X_t^u\right]$ or $E^u\left[\varphi(X_t^u)\right]$ where $\varphi$ is a Lipschitz function, were excluded. To consider this type of couplings, the Wasserstein metric turns out more appropriate as it is designed to guarantee finite moments. But, then we can no longer use the approach of \cite{choutri1}, based on the Girsanov transform because, in general, there is no relation between the Wasserstein metric and the Hellinger (Entropy) distance, unless the nonlinear Markov chain satisfies a log-Sobolev inequality, see for instance \cite{BGL} and \cite{OV} for further details. Such a log-Sobolev inequality will not be studied in this paper.

Under mild integrability and growth conditions on the unbounded jump intensities, using the Wasserstein metric,  we show that by applying the Skorohod selection (or embedding) theorem and  $L^2$-estimates, a fixed-point argument is still valid to derive existence and uniqueness of $P^u$. This turns out possible, thanks to Ekeland's distance on the set of admissible controls which makes it complete (or Polish space). Existence of an optimal control and a saddle-point of the game are derived using techniques involving Markov chain entropic backward stochastic differential equations (BSDE) which boils down to finding a minimizer and a min-max of an underlying Hamiltonian $H$. As documented in \cite{choutri1}, since the mean-field coupling through the marginal law of the controlled chain makes the Hamiltonian $H$, evaluated at time $t$, depend on the whole path of the control process over the time interval $[0,t]$, we cannot follow the frequently used procedure in standard optimal control and perform a {\it deterministic} minimization of  $H$ over the set of actions $U$ and then apply a Bene{\v{s}-type progressively measurable selection theorem to produce an optimal control. We should rather take  the essential infimum of $H$ over the set $\U$ of progressively measurable controls. This nonlocal feature of the dependence of $H$ on the control does not seem covered by the existing powerful  measurable selection theorem. Therefore, our main results are  formulated {\it by assuming} existence of an essential minimum $u^*\in \U$ of $H$ and use suitable comparison results of Markov chain BSDEs to show that $u^*$ is in fact an optimal control, simply because we don't  know of any suitable measurable selection theorem that would guarantee  existence of an essential minimizer of $H$. One should solve this problem on a case-by-case basis. \cite{choutri1} discusses conditions for existence of a nearly-optimal control and  examples where an optimal control exists e.g. provided  the set of Girsanov densities, indexed by admissible controls, is weakly sequentially compact. These cases  are still valid for the risk sensitive case, but we do not repeat them here. 

The structure of the paper is as follows. After a section of preliminaries, we introduce in Section 3 the class of Markov chains of mean-field type under study and prove its existence and uniqueness under rather weak conditions on the underlying unbounded jump intensities.  In Section 4, we consider the  control problem and provide conditions for existence of an optimal control. Finally, in Section 5, we consider a related zero-sum game and derive conditions for existence of a saddle-point under the so-called Isaacs' condition.

\section{Preliminaries}
Let $I=\{0,1,2,\ldots\}$ equipped with its discrete topology and $\s$-field and let $\Om:=\D([0,T],I)$ be the space of functions from $[0,T]$ to $I$ that are right continuous with left limits at each $t\in [0,T)$ and are left continuous at time $T$ endowed with the Skorohod metric $d_0$ which makes $(\Om,d_0)$ a complete separable metric (i.e. Polish) space.  Given $t\in [0,T]$ and $\omega\in\Om$, we put  $x(t,\omega)\equiv\omega(t)$ and denote by $\F^0_t:=\sigma(x(s),\,\, s\le t),\, 0\le t\le T,$ the filtration generated by $x$. The Borel $\sigma$-field $\F$  over $\Om$ coincides with $\sigma( x(s),\,\, 0\le s\le T)$. We also set $|x|_t:=\underset{0\le s\le t}\sup|x(s)|,\,\,\, 0\le t\le T$. 

To $x$ we associate the indicator process $I_i(t)=\mathbf{1}_{\{x(t)=i\}}$ whose value is $1$ if the chain is in state $i$ at time $t$ and $0$ otherwise and the counting processes $N_{ij}(t),\,\,i\ne j$, such that 
$$
N_{ij}(t)=\#\{\tau\in(0,t]:x(\tau^-)=i, x(\tau)=j\},\quad N_{ij}(0)=0,
$$
which count the number of jumps from state $i$ into state $j$ during the time interval $(0,t]$. Obviously, since $x$ is right continuous with left limits, it holds that both $I_i$ and $N_{ij}$ are right continuous  with left limits. Moreover, by the relationship
\begin{equation}\label{x-rep-1}
x(t)=\sum_i iI_i(t),\quad I_i(t)=I_i(0)+\underset{j:\, j\neq i}\sum\left(N_{ji}(t)-N_{ij}(t)\right),
\end{equation}
the state process, the indicator processes, and the counting processes carry the same information which is represented by the natural filtration  $\Ff^0:=(\F^0_t,\, 0\le t\le T)$ of $x$.

Note that \eqref{x-rep-1} is equivalent to the following useful representation
\begin{equation}\label{x-rep-2}
x(t)=x(0)+\sum_{i,j: \,i\neq j} (j-i) N_{ij}(t).
\end{equation}

\ms\no Below,  $C$  denotes a  generic positive constant which may change from line to line.

\subsection{Markov chains} Let $G(t)=(g_{ij}(t),\,i,j\in I),\,0\le t\le T,$ be the predictable $Q$-matrix, i.e.
$g$ is an $\Ff$-predictable process, satisfying 
\begin{equation}\label{G}
g_{ii}(t)=-\underset{j:\, j\neq i}\sum g_{ij}(t),\quad g_{ij}(t)\ge c_1> 0, \quad E\left[\underset{i,j: \, j\neq i}\sum \int_{(0,T]}g_{ij}(t)\, dt\right]<+\infty.
\end{equation}
The assumption that $g_{ij}$ is lower bounded away from zero is imposed to eliminate zero off-diagonal entries of $G$.
In view of e.g. Theorem 4.7.3 in \cite{EK} or Theorem 20.6 in \cite{RW} (for the finite state-space and time independent case with deterministic $Q$-matrix), given the $Q$-matrix $G$ and a probability measure $\xi$ over $I$,  there exists a unique probability measure $P$ on $(\Om,\F)$ under which the coordinate process $x$ is a time-inhomogeneous pure jump process (or chain) with intensity matrix $G$ and starting distribution $\xi$  i.e. such that $P\circ x^{-1}(0)=\xi$. Equivalently, $P$ solves the martingale problem for $G$ with initial probability distribution $\xi$ meaning that,   
for every $f$ on $I$, the process defined by
\begin{equation}\label{f-mart-1}
M^f(t):=f(x(t))-f(x(0))-\int_{(0,t]}(G(s)f)(x(s))\,ds
\end{equation}
is a local martingale relative to $(\Om,\F,\Ff^0)$, where 
$$
G(s)f(i):=\sum_j g_{ij}(s)f(j)=\sum_{j: \,j\neq i}g_{ij}(s)(f(j)-f(i)),\,\,\, i\in I,
$$
and
\begin{equation}\label{G-f}
G(s)f(x(s))=\sum_{i,j: \,j\neq i}I_i(s)g_{ij}(s)(f(j)-f(i)).
\end{equation}
If the $Q$-matrix $G$ is deterministic, $x$ become a time-inhomogeneous Markov chain.

By Lemma 21.13 in \cite{RW}, the compensated processes associated with the counting processes $N_{ij}$ defined by
 \begin{equation}\label{mart-1}
M_{ij}(t)=N_{ij}(t)-\int_{(0,t]} I_i(s^-)g_{ij}(s)\, ds,\quad M_{ij}(0)=0,
\end{equation}
are zero mean, square integrable and mutually orthogonal $P$-martingales  whose  predictable quadratic variations are
\begin{equation}\label{mart-2}
\langle M_{ij}\rangle_t=\int_{(0,t]} I_i(s^-)g_{ij}(s)\, ds.
\end{equation} 
Moreover, at jump times $t$ we have
\begin{equation}\label{mart-3}
\Delta M_{ij}(t)=\Delta N_{ij}(t)=I_i(t^-)I_j(t).
\end{equation}
Thus, the optional variation of $M$ 
$$
[M]_t=\sum_{0<s\le t}|\Delta M(s)|^2=\underset{0<s\le t}\sum\,\underset{i,j:\, j\neq i}\sum|\Delta M_{ij}(s)|^2=\underset{0<s\le t}\sum\,\Delta M_{x(s^-),x(s)}(s)
$$
is
\begin{equation}\label{optional-M}
[M]_t=\underset{0<s\le t}\sum\,\underset{i,j:\, j\neq i}\sum I_i(s^-)I_j(s)=\underset{i,j:\, j\neq i}\sum N_{ij}(t).
\end{equation}
Moreover, in view of \eqref{G},
\begin{equation}\label{exp-optional-M}
E\left[[M]_t\right]\le E\left[\underset{i,j:\, j\neq i}\sum \int_{(0,t]}g_{ij}(s)ds\right]<+\infty.
\end{equation}
We call  $M:=\{M_{ij},\,\, i\neq j\}$ the accompanying martingale of the counting process $N:=\{N_{ij},\,\, i\neq j\}$ or of the chain $x$. 

We denote by $\Ff:=(\F_t)_{0\le t\le T}$ the completion of $(\F^0_t)_{t\le T}$ with the $P$-null sets of $\Omega$. For simplicity, in Sections 4 and 5 below, we will eventually assume that $\F_0$ is trivial. Hereafter, a process from $[0,T]\times\Om$ into a measurable space is said predictable (resp. progressively measurable) if it is predictable (resp. progressively measurable) w.r.t. the predictable $\sigma$-field on $[0,T]\times \Om$ (resp. $\Ff$).

\ms\no For a real-valued matrix $m:=(m_{ij},\, i,j \in I)$ indexed by $I\times I$, we let 
\begin{equation}\label{g-t}
\|m\|_g^2(t):=\underset{i,j:\, i\neq j}\sum |m_{ij}|^2g_{ij}(t)\mathbf{1}_{\{w(t^-)=i\}}<\infty.
\end{equation}
If $m$ is time-dependent, we simply write $\|m(t)\|_g^2$.

\ms\no Let $(Z_{ij}, \, i\neq j)$ be a family of predictable processes and set 
\begin{equation}\label{norm}
\|Z(t)\|^2_{g}:=\sum_{i,j:\, i\neq j}Z^2_{ij}(t)g_{ij}(t)I_{i}(t^-),\quad 0< t\le T,
\end{equation}
\begin{equation}\label{quadratic}
\sum_{0<s\le t}Z(s)\Delta M(s):=\sum_{0<s\le t}\, \underset{i,j: \,i\neq j}\sum\,Z_{ij}(s)\Delta M_{ij}(s)=\sum_{0<s\le t}Z_{x(s^-),x(s)}(s). 
\end{equation}
Consider the local martingale
\begin{equation}\label{stoch}
W(t)=\int_{(0,t]}Z(s)dM(s):=\sum_{i,j: \, i\neq j}\int_{(0,t]} Z_{ij}(s)dM_{ij}(s). 
\end{equation}
Then, the optional  quadratic  variation of the local martingale $W$ 
\begin{equation*}
[W]_t=\sum_{0<s\le t}|Z(s)\Delta M(s)|^2=\sum_{0<s\le t}\,\sum_{i,j: \,i\neq j}Z^2_{ij}(s)|\Delta M_{ij}(s)|^2
\end{equation*}
is
\begin{equation}\label{optional}
[W]_t=\sum_{0<s\le t}\,Z^2_{x(s^-),x(s)}(s)\Delta M_{x(s^-),x(s)}(s)
\end{equation}
and its compensator is
\begin{equation}\label{compensator}
\langle W\rangle_t=\int_{(0,t]}\|Z(s)\|^2_{g}ds.
\end{equation}
Provided that
\begin{equation}\label{Z-g-int}
E\left[\int_{(0,T]} \|Z(s)\|^2_{g}ds\right]<\infty,
\end{equation}
$W$ is a square-integrable martingale if and only if $E\left[[W]_T\right]<+\infty$ if and only if $E[\langle W\rangle_T]<+\infty$. In this case, we have 
\begin{equation}\label{isometry}
E[W^2(t)]=E\left[[W]_t\right]=E[\langle W\rangle_t], \quad 0\le t\le T.
\end{equation} 
Moreover, the following Doob's inequality holds:
\begin{equation}\label{Doob}
E\left[\sup_{0\le t\le T}\left|\int_{(0,t]}Z(s)dM(s)\right|^2\right]\le 4 E\left[\int_{(0,T]} \|Z(s)\|^2_{g}ds\right].
\end{equation}
If $\wt Z$ is another predictable process that satisfies \eqref{Z-g-int}, setting
\begin{equation}
\langle Z(t),\wt Z(t)\rangle_{g}:=\underset{i,j: \,i\neq j}\sum\,Z_{ij}(t)\wt Z_{ij}(t)I_{i}(t^-)g_{ij}(t),\quad 0\le t\le T,
\end{equation}
and considering the martingale
$$
\wt W(t)=\int_{(0,t]}\wt Z(s)dM(s):=\sum_{i,j: \, i\neq j}\int_{(0,t]}\wt Z_{ij}(s)dM_{ij}(s),
$$
it is easy to see that
\begin{equation}\label{covariation}
E\left[ [W,\wt W]_t\right]=E\left[\int_{(0,t]} \langle Z(s),\wt Z(s)\rangle_{g}ds\right].
\end{equation} 


\subsection{Markov chain BSDEs}
 Our approach to show existence of an optimal control and a value of the zero-sum game is based on solutions $(Y,Z)$ of Markov chain backward stochastic differential equations (BSDEs) with data $(\phi,\zeta)$ defined on $(\Omega, \mathcal{F},\mathbb{F},P)$ by
\begin{equation}\label{Y-bsde0}
-dY(t)=\phi(t,\omega,Y(t^-),Z(t))dt-Z(t)dM(t),\quad Y(T)=\zeta.
\end{equation}
\begin{definition}\label{bsde-sol} A solution $(Y,Z)$ of the BSDE \eqref{Y-bsde0} consists of an adapted process $Y$ which is right-continuous with left limits and a predictable process $Z$ which satisfy  
\begin{equation*}
E\left[|Y|_T^2+\int_{(0,T]} \|Z(s)\|^2_{g}ds\right]<\infty.
\end{equation*}  
Uniqueness of this solution occurs $P$-$\as$ for $Y$ and equality  $dP\times g_{ij}(s^-)I_i(s^-)ds$-almost everywhere for $Z$. 
\end{definition}
If $(Y,Z)$ solves \eqref{Y-bsde0} then by taking conditional expectation w.r.t. $\F_t$, we obtain the following representation:
\begin{equation*}
Y(t)=E\Big[\zeta+\int_{(t,T]} \phi(s,\omega,Y(s^-), Z(s))\,ds\Big|\mathcal{F}_t\Big], \quad t\in[0,T].
\end{equation*}
Moreover, $t\mapsto Y(t)$ is right-continuous with left limits. Therefore,  $Y(t^-)=Y(t)\, dt$-a.e. Hence, we may write
\begin{equation*}
Y(t)=E\Big[\zeta+\int_{(t,T]} \phi(s,\omega,Y(s), Z(s))\,ds\Big|\mathcal{F}_t\Big], \quad t\in[0,T].
\end{equation*}
For existence and uniqueness results of solutions of Markov chain BDSEs \eqref{Y-bsde0} based on the martingale representation theorem ($L^2$-theory) we refer to the  series of papers by Cohen and Elliott (see e.g. \cite{Cohen2012} and \cite{Cohen2015} and the references therein). 

Below, we establish existence of an optimal control and a saddle-point for the zero-sum game using some properties of a class of linear  BSDEs for which $\zeta$ is a bounded random variable and the driver $\phi$ is of the form 
\begin{equation}\label{driver}
\phi(t,x,y,z):=\kappa (t,x)y+\g(t,x,z)\quad \text{with}\,\,\,\g(t,x,0)=0,
\end{equation}
where
\begin{enumerate}
\item[(A1)] $\kappa$ is a bounded  and progressively measurable process,

\item[(A2)] $P$-a.s., for all $t\in[0,T],~ y\in\R,~ z^1=(z^1_{ij}),z^2 =(z^2_{ij}),\,\, z^1_{ij}, z^2_{ij}\in \R$
\begin{equation}\label{balancing}
\phi(t,x,y,z^1)-\phi(t,x,y,z^2)=\langle \ell(t,x,z^1,z^2),z^1-z^2\rangle_g,
\end{equation}
 for some predictable process $\ell=(\ell_{ij},\, i,j \in I)$  such that for every  $t\in[0,T]$,
$$
  \|\ell(t)\|_g\le a(t),\quad P\text{-}\as,
$$  where $(a(t))_t$ is a non-negative predictable process which belongs to $L^2([0,T]\times\Omega,dt\otimes dP)$. Moreover,
there exists a probability measure $\wt P$ on $(\Om,\F)$ under which the  processes 
\begin{equation}\label{m-m-bsde}
\wt M_{ij}(t)=M_{ij}(t)-\int_{(0,t]} \ell_{ij}(s)I_i(s^-)g_{ij}ds
\end{equation}
are zero mean, square integrable and mutually orthogonal $\wt P$-martingales.
\end{enumerate}

\ms
The relation \eqref{balancing} is called  condition $(A_{\gamma})$ in \cite{royer} and in \cite{Cohen2015} $\phi$ is called 'balanced'. It constitutes the key assumption which makes the following comparison result for solutions of Markov chain BSDEs possible. For a proof see \cite{royer} and \cite{Cohen2015}. 

\begin{proposition}[{\bf Comparison theorem}]\label{comparison} Let $(\zeta, \phi)$ and $(\wt\zeta,\wt\phi)$ be input data for two BSDEs of the form \eqref{Y-bsde0}, with solutions $(Y,Z)$ and $(\wt Y,\wt Z)$ respectively. Suppose
\begin{itemize}
\item [(a)] $\zeta \ge \wt\zeta \quad P\text{-}\as$,
\item [(b)] $\phi(t, x,y, z) \ge \wt\phi(t, x,y, z)\quad dt \otimes dP\text{-}\as$ for all $(y, z)$\\ 
and at least one of  $\phi$ and  $\wt\phi$ satisfies \eqref{balancing}. 
\end{itemize}
Then 
$$
Y\ge \wt Y \quad P\text{-}\as
$$
\end{proposition}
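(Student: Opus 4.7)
My plan is to linearize the BSDE satisfied by $\delta Y:=Y-\wt Y$ and $\delta Z:=Z-\wt Z$, change to the measure $\wt P$ supplied by assumption (A2), and read off nonnegativity of $\delta Y$ from a conditional-expectation representation under $\wt P$.

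Without loss of generality assume that $\phi$ is the driver satisfying \eqref{balancing} (otherwise swap the roles of the two BSDEs). Subtracting the two equations gives
\[
-d\delta Y(t) = \bigl[\phi(t,Y(t^-),Z(t)) - \wt\phi(t,\wt Y(t^-),\wt Z(t))\bigr]\,dt - \delta Z(t)\,dM(t),\quad \delta Y(T)=\zeta-\wt\zeta\ge 0.
\]
I would then decompose the driver difference telescopically as
\[
\bigl[\phi(t,Y,Z)-\phi(t,\wt Y,Z)\bigr] + \bigl[\phi(t,\wt Y,Z)-\phi(t,\wt Y,\wt Z)\bigr] + \bigl[\phi(t,\wt Y,\wt Z)-\wt\phi(t,\wt Y,\wt Z)\bigr],
\]
and identify each piece: by Lipschitz continuity of $\phi$ in $y$ (linear with coefficient $\k$ for the driver form \eqref{driver}), the first equals $\a(t)\delta Y(t^-)$ for a bounded predictable process $\a$; the balancing hypothesis rewrites the second as $\la \ell(t),\delta Z(t)\ra_g$ for a predictable $\ell$ with $\|\ell(t)\|_g\le a(t)$; and the third, which I denote $\Delta\phi(t)$, is nonnegative by hypothesis (b).

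Now the martingales $\wt M_{ij}$ from (A2) satisfy $\delta Z(t)\,dM(t) - \la \ell(t),\delta Z(t)\ra_g\,dt = \delta Z(t)\,d\wt M(t)$, so under $\wt P$ the equation for $\delta Y$ becomes the linear BSDE
\[
-d\delta Y(t) = [\a(t)\delta Y(t^-) + \Delta\phi(t)]\,dt - \delta Z(t)\,d\wt M(t).
\]
With the discount $\Gamma(t):=\exp\bigl(\int_0^t \a(s)\,ds\bigr)$ (bounded since $\a$ is), the product rule absorbs the linear term and yields $d(\Gamma\delta Y) = -\Gamma\Delta\phi\,dt + \Gamma\delta Z\,d\wt M$. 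Integrating from $t$ to $T$ and taking $\wt E[\,\cdot\,|\F_t]$, provided the stochastic integral is a true $\wt P$-martingale, produces
\[
\Gamma(t)\delta Y(t) = \wt E\Bigl[\Gamma(T)(\zeta-\wt\zeta) + \int_t^T \Gamma(s)\Delta\phi(s)\,ds \,\Big|\, \F_t\Bigr] \ge 0,
\]
and since $\Gamma>0$ and $\wt P\sim P$ (by Girsanov construction), we get $\delta Y\ge 0$ $P$-a.s.

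The main technical point I expect is verifying the true-martingale (rather than merely local) property of $\int_0^\cdot \Gamma(s)\delta Z(s)\,d\wt M(s)$ under $\wt P$, because the $L^2$-integrability of $Z,\wt Z$ in Definition \ref{bsde-sol} is stated only under $P$. The standard remedy is to localize along stopping times $\tau_n\uparrow T$ on which the $\wt P$-quadratic variation is controlled, apply the conditional-expectation identity on each $[0,\tau_n]$, and pass to the limit using boundedness of $\Gamma$, the $P$-side $L^2$-control of $\delta Z$, and the explicit stochastic-exponential form of $d\wt P/dP$, as in \cite{royer} and \cite{Cohen2015}.
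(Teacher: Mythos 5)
Your linearization argument (telescoping the driver difference into a $y$-part, a $z$-part handled by the balancing condition via the change of measure to $\wt P$, and a nonnegative residual, then discounting and taking conditional expectation) is correct and is exactly the proof the paper defers to in \cite{royer} and \cite{Cohen2015}; the paper gives no proof of its own. Your two flagged caveats --- that some control of $\phi$ in $y$ is needed (automatic for drivers of the form \eqref{driver} with bounded $\kappa$) and that the local martingale must be upgraded to a true $\wt P$-martingale by localization --- are precisely the points those references handle, so nothing is missing.
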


Let $(Y,Z)$ be a solution of the BSDE \eqref{Y-bsde0} with driver \eqref{driver}. Since $\g(t,x,0)=0$, by \eqref{balancing}, we may write
\begin{equation}\label{driver-linear}
\phi(t,x,y,z)=\kappa (t,x)y+\langle \ell(t,x,z,0),z\rangle_g,
\end{equation}
where $\ell(t,x,Z(t),0)$ is such that \eqref{m-m-bsde} holds. Then $Y$ admits the explicit representation
\begin{equation}\label{Y-rep}
Y(t)=\wt{E}\left[\zeta e^{\int_{(t,T]} k(s,x)ds}|\F_t\right],
\end{equation}
where the conditional expectation is taken w.r.t. $\wt P$.

In the next proposition we summarize existence and uniqueness of solutions of the BSDE \eqref{Y-bsde0} with  driver of the form \eqref{driver}. 

\begin{proposition}\label{bsde-th}
Let  $\phi$ be of the form \eqref{driver} and satisfies the assumptions (A1) and (A2). Moreover, let  $\zeta$ be an $\F_T$-measurable and bounded random variable. Then, the BSDE \eqref{Y-bsde0} associated with $(\phi,\zeta)$ admits a unique solution $(Y,Z)$ for which $Y$ satisfies \eqref{Y-rep} (thus  bounded) and 
\begin{equation*}
E\left[\int_{(0,T]} \|Z(s)\|^2_{g}ds\right]<\infty.
\end{equation*}  
\end{proposition}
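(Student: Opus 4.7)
The plan is to split the argument into three steps: existence and uniqueness of $(Y,Z)$ via the standard $L^2$-theory for Markov chain BSDEs, reformulation of \eqref{Y-bsde0} as a linear equation under $\wt P$, and extraction of the representation \eqref{Y-rep} through an exponential integrating factor.

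First I would verify that the driver $\phi$ fits the stochastic-Lipschitz framework of Cohen--Elliott. Assumption (A1) makes $y\mapsto \kappa(t,x)\,y$ globally Lipschitz with uniformly bounded constant, while combining the balanced condition \eqref{balancing} with Cauchy--Schwarz for the bilinear form $\langle\cdot,\cdot\rangle_g$ gives
\[
|\phi(t,x,y,z^1)-\phi(t,x,y,z^2)|\le a(t)\,\|z^1-z^2\|_g,
\]
so $\phi$ is stochastic-Lipschitz in $z$ with coefficient $a\in L^2([0,T]\times\Om,dt\otimes dP)$. Together with $\phi(t,x,0,0)=\g(t,x,0)=0$ and boundedness (hence square-integrability) of $\zeta$, this places the problem in the scope of the $L^2$-theory \cite{Cohen2012,Cohen2015}, whose contraction argument on an exponentially weighted $L^2$-space produces the unique solution $(Y,Z)$ of Definition~\ref{bsde-sol}, together with the stated estimate on $Z$.

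For the representation, starting from the linearized form \eqref{driver-linear} and using \eqref{m-m-bsde} in the form
\[
Z(t)\,dM(t)=Z(t)\,d\wt M(t)+\langle\ell(t,x,Z(t),0),Z(t)\rangle_g\,dt,
\]
equation \eqref{Y-bsde0} becomes, under $\wt P$, the linear BSDE
\[
-dY(t)=\kappa(t,x)\,Y(t^-)\,dt-Z(t)\,d\wt M(t),\qquad Y(T)=\zeta.
\]
Applying Ito's product rule to $Y(t)\,e^{\int_{(0,t]}\kappa(s,x)\,ds}$, whose integrating factor is absolutely continuous, cancels the $\kappa Y$ drift and yields
\[
d\bigl(Y(t)\,e^{\int_{(0,t]}\kappa\,ds}\bigr)=e^{\int_{(0,t]}\kappa\,ds}\,Z(t)\,d\wt M(t).
\]
Integrating over $(t,T]$, taking $\wt E[\,\cdot\,|\,\F_t]$, and dividing by $e^{\int_{(0,t]}\kappa\,ds}$ produces \eqref{Y-rep} once the right-hand side is identified as a true $\wt P$-martingale; the bound $|Y(t)|\le \|\zeta\|_\infty\,e^{T\|\kappa\|_\infty}$ then follows at once from the representation and the boundedness of $\kappa$ given by (A1).

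The main technical obstacle is precisely this last identification: a priori the process $\int_{(0,\cdot]}e^{\int_{(0,s]}\kappa\,du}\,Z(s)\,d\wt M(s)$ is only a $\wt P$-local martingale, because the $L^2$-bound on $Z$ from Step~1 is established under $P$ and need not transfer to $\wt P$ without further integrability control on the Radon--Nikodym density $d\wt P/dP$. I would handle this by localization: pick stopping times $\tau_n\uparrow T$ that reduce the local martingale, derive \eqref{Y-rep} on $[0,\tau_n]$, and pass to the limit via dominated convergence using an independent $L^\infty$-bound on $Y$. Such a bound can be obtained by constructing the candidate $\bar Y(t):=\wt E[\zeta\,e^{\int_{(t,T]}\kappa\,ds}\mid\F_t]$ together with its integrand $\bar Z$ via the $\wt P$-martingale representation theorem for Markov chains, and then identifying $(\bar Y,\bar Z)$ with the $(Y,Z)$ of Step~1 through the uniqueness statement of the Cohen--Elliott theory.
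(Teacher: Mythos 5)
Your Steps 2--3 (rewriting \eqref{Y-bsde0} under $\wt P$ via \eqref{m-m-bsde}, the integrating factor $e^{\int_0^t\kappa\,ds}$, and the localization needed to upgrade the $\wt P$-local martingale to a true martingale) are sound and are essentially how one extracts \eqref{Y-rep} once a solution is in hand. The problem is Step 1, on which everything else (including your final identification of $(\bar Y,\bar Z)$ with $(Y,Z)$ ``through the uniqueness statement of the Cohen--Elliott theory'') rests. The Cauchy--Schwarz estimate you write, $|\phi(t,x,y,z^1)-\phi(t,x,y,z^2)|\le a(t)\,\|z^1-z^2\|_g$, gives a Lipschitz \emph{coefficient} $a(t)$ that is an unbounded stochastic process known only to lie in $L^2([0,T]\times\Om,dt\otimes dP)$. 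The Cohen--Elliott $L^2$-theory you invoke requires a uniform (deterministic) Lipschitz constant; its contraction argument does not go through with a merely square-integrable stochastic coefficient. The standard repair --- working in a norm weighted by $e^{\beta\int_0^t a^2(s)\,ds}$ as in the stochastic-Lipschitz literature --- needs the data to have finite moments against $e^{\beta\int_0^T a^2(s)\,ds}$, which is not implied by $a\in L^2(dt\otimes dP)$ and is not assumed here. So as written, existence and uniqueness are not established, and the representation argument has nothing to attach to.

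This is precisely why the paper does \emph{not} appeal to the Lipschitz fixed-point theory: it follows Hamad\`ene--Lepeltier (Theorem I-3) and builds the solution by a monotone approximation scheme --- truncate the coefficient to obtain genuinely Lipschitz Markov chain BSDEs solvable by the standard theory, use the comparison theorem (Proposition \ref{comparison}) to get a monotone sequence of solutions, derive uniform bounds from the explicit exponential representation of the approximants (the analogue of \eqref{Y-rep} at each truncation level, which gives an $L^\infty$ bound independent of the level since $\zeta$ and $\kappa$ are bounded), and pass to the limit using the isometry \eqref{isometry} and Doob's inequality \eqref{Doob}. To repair your proof you would need to replace your Step 1 by such an approximation (or by some other device adapted to square-integrable stochastic Lipschitz coefficients); note also that $\wt P$ itself depends on the solution through $\ell(t,x,Z(t),0)$, so one cannot sidestep the issue by constructing $\bar Y$ directly under $\wt P$ --- that construction is circular until $Z$ exists.
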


The proof of the theorem is similar to that of the Brownian motion driven BSDEs derived in \cite{Ham-Lepl95}, Theorem I-3, using an approximation scheme by  monotone sequences of solutions of standard Markov chain BSDEs for which existence, uniqueness and comparison results (see Proposition \eqref{comparison}) are similar to that of the Brownian motion driven BSDEs derived in \cite{pardoux} and \cite{EPQ}, along with the properties \eqref{isometry} and \eqref{Doob} related to the martingale $W$ displayed in \eqref{stoch} together with It\^o's formula for semimartingales driven by jump processes. We omit the details.

\subsection{The Wasserstein distance for Probability measures on $I$}

\medskip
Let $\mathcal{P}(I)$ denote the set of probability measures on $I$. 

For $\mu,\nu\in\mathcal{P}(I)$, the 2-Wasserstein distance is defined by the formula
\begin{equation}\label{W-2}
d(\mu,\nu):=\inf\left\{\left(\int_{I\times I}|x-y|^2 F(dx,dy)\right)^{1/2}\right\}
\end{equation}
over $F\in\mathcal{P}(I\times I)$ with marginals $\mu$ and $\nu$. It has also the following formulation in terms of a coupling between two random variables $X$ and $Y$ defined on the same probability space:
\begin{equation}\label{d-coupling}
d(\mu,\nu)=\inf\left\{\left(\E\left[|X-Y|^2\right]\right)^{1/2},\,\,\text{law}(X)=\mu,\,\text{law}(Y)=\nu \right\}.
\end{equation}

\medskip\noindent The 1-Wasserstein (or Kantorovich-Rubinstein) distance is defined by the formula
\begin{equation}\label{K-R}
d_1(\mu,\nu):=\inf\left\{\int_{I\times I }|x-y|F(dx,dy)\right\}
\end{equation}
over $F\in\mathcal{P}(I\times I)$ with marginals $\mu$ and $\nu$. It has the following dual representation
\begin{equation}\label{K-R-duality}
d_1(\mu,\nu)=\sup_{\|\psi\|_{Lip}\le 1}\left\{\int_{I} \psi\,d\mu-\int_{I} \psi\,d\nu\right\}
\end{equation}
over Lipschitz functions $\psi$ with  Lipschitz constant less or equal to one. This distance is very natural when the jump intensities are e.g. of the type $\lambda_{ij}(t,x,\int y\mu(dy))$. By the  Cauchy-Schwarz inequality we have 
$$
d_1(\mu,\nu)\le d(\mu,\nu).
$$

\medskip\noindent
Similarly, on $(\Om,\Ff)$ we define the 2-Wasserstein  metric between two probability measures $P$ and $Q$ as
\begin{equation}\label{W-filt}
D_t(P,Q):=\inf\left\{\left(\int_{\Om\times\Om}|x-y|_t^2 R(dx,dy)\right)^{1/2}\right\},\quad 0\le t\le T,
\end{equation}
over $R\in\mathcal{P}(\Om\times \Om)$ with marginals $P$ and $Q$.

\smallskip\noindent  We have
\begin{equation}\label{ordering}
D_s(P,Q)\le D_t(P,Q),\quad 0\le s\le t.
\end{equation}
Moreover,  for $P, Q\in \mathcal{P}(\Om)$ with time marginals $P_t:=P\circ x^{-1}(t)$ and $Q_t:=Q\circ x^{-1}(t)$, the  2-Wasserstein  distance between $P_t$ and $Q_t$ satisfies
\begin{equation}\label{margine}
d(P_t,Q_t)\le D_t(P,Q),\quad 0\le t\le T.
\end{equation}

Endowed with the 2-Wasserstein metric $D_T$, $\mathcal{P}_2(\Om)$ is a complete metric space. Moreover, $D_T$ carries out the usual topology of weak convergence.


\section{Existence of controlled mean-field Jump processes}
In this section we  show existence and uniqueness of controlled jump processes of mean-field type using the 2-Wasserstein distance as a carrier of the topology of weak convergence. A construction of such processes using the total variation distance is given in \cite{choutri1}. 

\ms Let $(U, \delta)$ be a compact metric space with its Borel field $\mathcal{B}(U)$ and $\U$ the set of $\Ff$-progressively measurable processes $u=(u(t),\,0\le t\le T)$ with values in $U$. We call $\U$ the set of admissible controls.

\ms\no
We would like to show that, for  each $u\in\U$, there exists a unique probability measure $P^u$ on $(\Om,\F)$ under which the coordinate process $x$ is a jump process with intensities 
\begin{equation}\label{u-lambda}
\l^u_{ij}(t):=\l_{ij}(t,x,P^u\circ x^{-1}(t),u(t)),\,\,\, i,j\in I,\,\, 0\le t\le T.
\end{equation}

We assume the following. 
\begin{itemize}
 \item[(B1)] For any $Q\in \mathcal{P}(\Omega)$, $u\in\U$ and  $i,j\in I$,  the process $((\l_{ij}(t, x,Q\circ x^{-1}(t),u(t)))_t$ is predictable and satisfies, for every $t\in[0,T],\,w\in \Omega,\,\mu\in\mathcal{P}(I)$ and $i\neq j$,
 $$
 \underset{u\in U}{\inf\,}\l_{ij}(t, w,\mu,u)>0.
 $$
 
 \item[(B2)] For $p=1,2$ and for every $t\in[0,T]$, $w\in \Om,\, u\in U$ and  $\mu\in \mathcal{P}_2(I)$, 
 $$
 \underset{i,j: \, j\neq i}\sum |j-i|^p\l_{ij}(t,w,\mu,u)\le C(1+|w|^p_t+\int|y|^p\mu(dy)).
 $$
 
 \item[(B3)]  For $p=1,2$ and for every $t\in[0,T]$, $w, \tilde w\in \Om$ and  $\mu, \nu \in\mathcal{P}(I)$,
  $$
  \underset{i,j: \, j\neq i}\sum |j-i|^p|\l_{ij}(t,w,\mu,u)-\l_{ij}(t,\tilde w,\nu,v)| \le C(|w-\tilde w|^p_t+d^p(\mu,\nu)+\delta^p(u,v)).
  $$
  
  \item [(B4)] The following holds for the $Q$-matrix $(g_{ij})$ and the probability measure $\xi$ on $I$. 
 $$
 \underset{i,j: \, j\neq i}\sum |j-i|^2g_{ij}<\infty,\quad \|\xi\|_2^2:=\int |y|^2\xi(dy)<\infty.
 $$
 \end{itemize}

\begin{remark}\label{Lip-u} 
 \begin{enumerate}
\item The positivity of $\l$ in (B1) is imposed to enable the construction of $P^u$ by  using a Girsanov-type change of probability measure.
\item 
 The Lipschitz continuity condition (B3) of $\l$ w.r.t.\@ the control parameter is used to only show that the map $u\mapsto P^u$ is continuous. It will not be needed to prove the main result, Theorem \eqref{opt-BSDE}. 
 \end{enumerate}
\end{remark}

Examples of intensities satisfying (B2) and (B3) include the following class of mean-field versions of the so-called reaction models of polynomial type (see e.g. \cite{Chen}, pp. 460-463).
 
\begin{example} {\bf Mean-field Schl\"ogl and Autocatalytic models}. In the mean-field  version of the Schl\"ogl model (cf. \cite{NP}, \cite{Chen}, \cite{DZ}, \cite{FZ} and \cite{Fe}) the intensities are of the form: 
\begin{itemize}
\item {\bf Schl\"ogl's first model}
 \begin{equation*}\label{schlogl1}
 \l_{ij}(u,w,\mu):=\left\{\begin{array}{ll} \nu_{ij} & \text{if} \,\, j\neq i+1, i-1,\\
 \b_0+\b_1\int y\mu(dy)  & \text{if} \,\, j= i+1,\\
 \d_1 \int y\mu(dy)+\d_2 \int y(y-1)\mu(dy) & \text{if} \,\, j= i-1,
 \end{array}
 \right.
 \end{equation*}
 where the control parameter is $u:=(\b_0,\b_1,\d_1,\d_2)$. When $\b_0=\d_1=0$ we obtain the Autocatalytic  model. \\
 
\item  {\bf Schl\"ogl's second  model} 
 \begin{equation*}\label{schlogl2}
 \l_{ij}(u,w,\mu):=\left\{\begin{array}{ll} \nu_{ij} & \text{if} \,\, j\neq i+1, i-1,\\
 \b_0+\b_2\int y(y-1)\mu(dy)  & \text{if} \,\, j= i+1,\\
 \d_1 \int y\mu(dy)+\d_3 \int y(y-1)(y-2)\mu(dy) & \text{if} \,\, j= i-1.
 \end{array}
 \right.
 \end{equation*}
 The control parameter is $u:=(\b_0,\b_2,\d_1,\d_3)$. This model requires the use of the 3-Wasserstein metric. \\
 \end{itemize}
 In these examples, the entries of the control parameter are all strictly positive.  Moreover, $(\nu_{ij})_{ij}$ is a deterministic $Q$-matrix for which there exists $N_0\ge 1$ such
that $\nu_{ij}= 0$ for $|j-i|\ge N_0$ and $\nu_{ij}> 0$ for $|j-i|< N_0$. 
\end{example} 

\medskip For $u\in\U$, let $Q^u\in \mathcal{P}_2(\Om)$ and $P^u$ be the probability measure on $(\Om,\F)$ for which the coordinate process $x$ is a jump process with intensity  
\begin{equation}\label{Q-u-lambda}
\l_{ij}(t,x,Q^u\circ x^{-1}(t),u(t)),\,\,\, i,j\in I,\,\, 0\le t\le T.
\end{equation}
Such a probability measure exists because the intensity matrix is standard. \\
\no Existence and uniqueness of a mean-field jump process with intensity \eqref{u-lambda} boils down to showing that $P^u=Q^u$ i.e. $Q^u$ is a fixed point. \\  

We have
\begin{theorem}\label{FP-u} Assume $\lambda$ satisfies the conditions (B1)-(B3). Then, for each admissible control process $u\in\U$, the map 
\begin{equation*}\begin{array}{lll}
\Phi: \mathcal{P}_2(\C)\longrightarrow \mathcal{P}_2(\C) \\ 
\qquad\quad Q^u  \longrightarrow \Phi(Q^u):=P^u
\end{array}
\end{equation*} 
under which the coordinate process $x$ is a jump process with intensity \eqref{Q-u-lambda} and initial distribution $\xi$ having finite second moment, is well defined. Moreover, it admits a unique fixed point.
\end{theorem}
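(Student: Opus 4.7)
The plan is twofold: first show that $\Phi$ maps $\mathcal{P}_2(\Om)$ into itself, then apply a Picard-type iteration in the Wasserstein metric $D_T$ to extract a unique fixed point.

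\textbf{Well-definedness.} Fix $Q^u \in \mathcal{P}_2(\Om)$. Its time marginal satisfies $\int |y|^2 Q^u \circ x^{-1}(t)(dy) \le E^{Q^u}[|x|_t^2] < \infty$. By (B1), the process $(t,\omega) \mapsto \lambda_{ij}(t,\omega,Q^u\circ x^{-1}(t), u(t))$ is predictable, and (B2)-(B4) make its sum over $(i,j)$ locally integrable. The standard construction of pure-jump processes with predictable $Q$-matrix (e.g.\ Theorem~4.7.3 in \cite{EK}) then yields a unique $P^u \in \mathcal{P}(\Om)$ under which $x$ has these intensities and initial law $\xi$. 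To confirm $P^u \in \mathcal{P}_2(\Om)$, I would combine \eqref{x-rep-2}, Doob's inequality \eqref{Doob}, the growth bound (B2), and Gr\"onwall's inequality to derive $\sup_{t \le T} E^{P^u}[|x|_t^2] < \infty$.

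\textbf{Fixed-point argument.} For $Q^1, Q^2 \in \mathcal{P}_2(\Om)$, set $P^i := \Phi(Q^i)$. I would construct a coupling of $P^1$ and $P^2$ on an enlarged probability space by thinning a common Poisson random measure $\mathcal{N}(ds,dj,dz)$ on $[0,T]\times I\times \mathbb{R}_+$ (with intensity the product of Lebesgue, counting on $I$, and Lebesgue) starting from a common $\xi$-distributed initial datum:
\[
x^k(t) = x^k(0) + \int_0^t\!\!\int_I\!\!\int_0^\infty (j-x^k(s^-))\,\mathbf{1}_{\{z \le \lambda_{x^k(s^-),j}(s, x^k, Q^k\circ x^{-1}(s), u(s))\}}\,\mathcal{N}(ds,dj,dz).
\]
Growth (B2) gives unique strong solutions in $L^2$. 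An $L^2$-estimate using the isometry \eqref{isometry}, Doob's inequality \eqref{Doob}, and the Lipschitz bound (B3) then yields
\[
E[|x^1-x^2|_t^2] \le C\int_0^t E[|x^1-x^2|_s^2]\,ds + C\int_0^t d^2(Q^1\circ x^{-1}(s), Q^2\circ x^{-1}(s))\,ds.
\]
Gr\"onwall, combined with \eqref{margine} and the fact that $(x^1,x^2)$ is a coupling of $(P^1,P^2)$, gives $D_t^2(P^1,P^2) \le E[|x^1-x^2|_t^2] \le C\int_0^t D_s^2(Q^1,Q^2)\,ds$. Iterating $n$ times produces
\[
D_T^2(\Phi^n(Q^1), \Phi^n(Q^2)) \le \frac{(CT)^n}{n!}\,D_T^2(Q^1,Q^2),
\]
so $\Phi^n$ is a strict contraction for large $n$. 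Since $(\mathcal{P}_2(\Om), D_T)$ is complete, Banach's fixed-point theorem applied to $\Phi^n$ delivers a unique fixed point of $\Phi$.

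\textbf{Main obstacle.} The delicate step is the coupling estimate. Because jumps have unbounded size (cf.\ the Schl\"ogl models), when one expands $E[|x^1-x^2|_t^2]$ the terms $(j-x^k(s^-))^2$ together with the indicator differences $\mathbf{1}_{\{x^1(s^-)=i\}}-\mathbf{1}_{\{x^2(s^-)=i\}}$ coming from the common thinning must be absorbed into $|x^1-x^2|_s^2$ plus the exogenous Wasserstein gap $d^2(Q^1\circ x^{-1}(s),Q^2\circ x^{-1}(s))$, without producing an unbounded constant. The quadratic weight $|j-i|^2$ in (B2)-(B3) is precisely tuned to make this collapse work, and (B4) is what closes the loop in $\mathcal{P}_2$. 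This is the precise point where, as the authors emphasize in the introduction, the Girsanov/TV approach of \cite{choutri1} breaks down and the Wasserstein/$L^2$ route becomes essential.
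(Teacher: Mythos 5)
Your overall strategy coincides with the paper's: realize $\Phi(Q^1)$ and $\Phi(Q^2)$ as two jump processes driven by a common Poisson noise on one probability space, run an $L^2$-estimate through (B2)--(B3) and Gr\"onwall to obtain $D_t^2(\Phi(Q^1),\Phi(Q^2))\le C\int_0^t D_s^2(Q^1,Q^2)\,ds$, and iterate so that $\Phi^N$ becomes a contraction on the complete space $(\mathcal{P}_2(\Om),D_T)$. The only cosmetic difference is the coupling device: you thin a common Poisson random measure, while the paper uses the random-time-change representation $x(t)=\zeta+\sum_{i\neq j}(j-i)N^0_{ij}\bigl(\int_0^t\lambda_{ij}\,ds\bigr)$ obtained via Skorohod's representation theorem (as in \cite{EK}, Theorem 6.4.1), and estimates $\widehat{E}[(N_{ij}(t)-\widetilde{N}_{ij}(t))^2]$ directly from the first two moments of the unit-rate Poisson processes. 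These are equivalent representations, and either one yields the stated Gr\"onwall inequality and the $(CT)^N/N!$ contraction rate.

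There is, however, one genuine gap. An admissible control $u\in\U$ is an $\Ff$-progressively measurable process on the canonical space, i.e.\ a functional of the path $x$. In your thinning SDE the symbol $u(s)$ appears inside both intensities, but you never specify what object $u$ is on the enlarged space carrying $\mathcal{N}$. If it is interpreted as the path functional evaluated along each solution, the two coupled processes see different control values $u(s,x^1)$ and $u(s,x^2)$, and (B3) then produces an extra term $\delta^2(u(s,x^1),u(s,x^2))$ in the estimate which is not dominated by $|x^1-x^2|_s^2$ (the control is only measurable in the path, not Lipschitz). This is exactly the point the paper spends the first half of its proof on: it equips the set $\widehat{\U}$ of measurable $U$-valued functions on $[0,T]$ with Ekeland's metric \eqref{ekeland}, notes that $(\widehat{\U},\delta_E)$ is Polish, and applies Skorohod's representation theorem to the pair $(x,u)$ so that a single process $\bar u$ with the law of $u$ drives both coupled dynamics, making the $\delta^p(u,v)$ term in (B3) vanish. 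You need an analogous step (or a restriction to open-loop controls) before your coupling estimate closes.
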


\begin{proof} 
The proof uses the Skorohod's representation theorem (see \cite{EK}, Theorem 3.1.8). To this end, the admissible control processes $u\in \U$ should be seen as  random variables taking values in a Polish space. This is possible only if we are able to put a suitable topology on the set of controls. Indeed, denote 
\begin{equation} \label{control-set}
\widehat{\U}:=\{u:[0,T]\longrightarrow U; \,u_t \,\text{measurable} \}.
\end{equation}
By \cite{Ekeland}, Lemma 7.2,  the following metric
\begin{equation}\label{ekeland}
\delta_E(u,v):=\text{meas}(\{t\in[0,T],\,\,\delta(u(t),v(t))>0\})
\end{equation}
is a distance, where '$\text{meas}(A)$' of a subset $A$ of $[0,T]$ denotes its Lebesgue measure. Moreover, the metric space $(\widehat{\U},\delta_E)$ is a separable complete metric (i.e. a Polish) space.  The space $\Omega\times \widehat{\U}$, being Polish, we can apply Skorohod's representation theorem to the pair $(x,u): \Omega\longrightarrow \Omega\times \widehat{\U}$, using the same argument as \cite{EK}, Theorem 6.4.1, as follows. There exists a probability space $(\widehat{\Omega},\widehat{\F},\widehat{P})$ on which are defined a sequence  $N^0_{ij}, \, j\neq i$, of independent Poisson processes with intensity 1, a $\widehat\U$-valued process $\bar u$ and a random variable $\zeta$  such that
\begin{equation}\label{x-Q-u-rep}
x^{Q^{\bar u}}(t)=\zeta+\sum_{i,j: \,i\neq j} (j-i)N^0_{ij}\left(\int_{(0,t]} \l_{ij}(s,x^{Q^{\bar u}},Q^{\bar u}(s),\bar{u}(s) )ds\right)
\end{equation}
and
\begin{equation}
x^{\widetilde{Q}^{\bar u}}(t)=\zeta+\sum_{i,j: \,i\neq j} (j-i)N^0_{ij}\left(\int_{(0,t]}  \l_{ij}(s,x^{\widetilde{Q}^{\bar u}},\widetilde{Q}^{\bar u}(s),\bar{u}(s))ds\right),
\end{equation}
and for which $\bar u$  has the same distribution as $u$,  $x^{Q^{\bar u}}$ (resp. $x^{\widetilde{Q}^{\bar u}}$) has the same distribution as the coordinate process $x$ under $\Phi(Q^u)$ (resp. $\Phi(\widetilde{Q}^u)$), $Q^{\bar u}(t)$ (resp. $\widetilde{Q}^{\bar u}(t)$) is the $t$-marginal distribution of $x^{Q^{\bar u}}$ (resp. $x^{\widetilde{Q}^{\bar u}}$) and $\zeta$ has the same distribution $\xi(dy)$ as $x(0)$ under $\Phi(Q^u)$ and $\Phi(\widetilde{Q}^{u})$. In particular,
\begin{equation}
\|\Phi(Q^u)\|_2^2=\widehat{E}[|x^{Q^{\bar u}}|_T^2].
\end{equation}
Moreover, we have the 'coupling' inequality (cf. \eqref{d-coupling}):
\begin{equation}\label{coupling}
D^2_t(\Phi(Q^u),\Phi(\widetilde{Q}^u))\le \widehat{E}\left[|x^{Q^{\bar u}}-x^{\widetilde{Q}^{\bar u}}|_t^2\right],\quad 0\le t\le T,
\end{equation}
where $\widehat{E}$ denotes the expectation w.r.t. the probability measure $\widehat{P}$ on the new probability space.

Given $Q^u\in \mathcal{P}_2(\Om)$ we first show that $P^u:=\Phi(Q^u) \in \mathcal{P}_2(\Om)$ i.e. $\|P^u\|_2^2=\widehat{E}[|x^{Q^{\bar u}}|_T^2]<\infty$. Since the $N^0_{ij},\,i\neq j$, are mutually independent, we obtain from \eqref{x-Q-u-rep} that
$$
\widehat{E}[|x^{Q^{\bar u}}|_T^2]\le C\left(\|\xi\|_2^2+\sum_{i,j: \,i\neq j} (j-i)^2 \widehat{E}\left[\left(N^0_{ij}\left(\int_{(0,T]}  \l_{ij}(s,x^{Q^{\bar u}},Q^{\bar u}(s),\bar{u}(s))ds\right)\right)^2\right] \right).
$$
But, by the Meyer-Doob decomposition of the time changed processes \\ $N^0_{ij}\left(\int_0^T \l_{ij}(s,x^{Q^{\bar u}}, Q^{\bar u}(s),\bar{u}(s))ds\right)$, we have
$$
\begin{array}{lll}
\underset{i,j: \,i\neq j}{\sum}(j-i)^2 \widehat{E}\left[\left(N^0_{ij}\left(\int_{(0,T]} \l_{ij}(s,x^{Q^{\bar u}},Q^{\bar u}(s),\bar{u}(s))ds\right)\right)^2\right] \\ \,\,\le  \int_{(0,T]}\widehat{E}\left[\underset{i,j: \,i\neq j}{\sum} (j-i)^2 \l_{ij}(s,x^{Q^{\bar u}},Q^{\bar u}(s),\bar{u}(s))  + \left(\underset{i,j: \,i\neq j}{\sum} |j-i| \l_{ij}(s,x^{Q^{\bar u}},Q^{\bar u}(s),\bar{u}(s))\right)^2\right]ds.
\end{array}
$$
Thus, by (B2) we get
\begin{equation}\label{x-u.square}
\widehat{E}[|x^{Q^{\bar u}}|_T^2]\le C\left(1+\|\xi\|_2^2+\|Q^u\|^2_T+\int_{(0,T]} \widehat{E}[|x^{Q^{\bar u}}|_s^2]ds\right)
\end{equation}
and by Grownwall's inequality, we finally have
\begin{equation}\label{x-square}
\|P^u\|_2^2=\widehat{E}[|x^{Q^{\bar u}}|_T^2]\le  Ce^{CT}\left(1+\|\xi\|_2^2+\|Q^u\|^2_T\right),
\end{equation}
which shows that the mapping $\Phi$ is well defined. 

For a positive integer $N$, let $\Phi^N$ denote the $N$-fold composition of the map $\Phi$. If we show that, for $N$ large enough, $\Phi^N$ is a contraction i.e. given $Q:=Q^u$ and $\widetilde{Q}:=\widetilde{Q}^u$ in $\mathcal{P}_2(\C)$,
$$
D^2_T(\Phi^N(Q),\Phi^N(\widetilde{Q}))\le k_N D^2_T(Q,\widetilde{Q}),
$$
for some constant $k_N<1$, then $\Phi$ admits a unique fixed point.\\
\no Indeed, again, since the $N^0_{ij},\,i\neq j$, are mutually independent, we have
$$
\widehat{E}\left[ |x^{Q^{\bar u}}-x^{\widetilde{Q}^{\bar u}}|_t^2\right]=\sum_{i,j: \,i\neq j} (j-i)^2 \widehat{E}\left[\left(N_{ij}(t)-\widetilde{N}_{ij}(t)\right)^2\right],
$$
where, by the stationarity of the Poisson process, the processes 
$$\begin{array}{lcr}
N_{ij}(t):=N^0_{ij}\left(\int_{(0,t]} \l_{ij}(s,x^{{Q}^{\bar u}},Q^{\bar u}(s),\bar{u}(s))ds\right), \\  \widetilde{N}_{ij}(t):=N^0_{ij}\left(\int_{(0,t]} \l_{ij}(s,x^{\widetilde{Q}^{\bar u}},\widetilde{Q}^{\bar u}(s),\bar{u}(s))ds\right)
\end{array}
$$
satisfy
$$
\begin{array}{lll}
\widehat{E}\left[\left(N_{ij}(t)-\widetilde{N}_{ij}(t)\right)^2\right] \\ \,\,=\widehat{E}\left[\left(N^0_{ij}\left(\Big|\int_{(0,t]} \left(\l_{ij}(s,x^{{Q}^{\bar u}},{Q}^{\bar u}(s),\bar{u}(s))-\l_{ij}(s,x^{\widetilde{Q}^{\bar u}},\widetilde{Q}^{\bar u}_s,\bar{u}(s))\right)ds\Big|\right)\right)^2\right]\\ \quad \le \widehat{E}\left[\int_{(0,t]} \Big|\l_{ij}(s,x^{{Q}^{\bar u}},{Q}^{\bar u}(s),\bar{u}(s))-\l_{ij}(s,x^{\widetilde{Q}^{\bar u}},\widetilde{Q}^{\bar u}(s),\bar{u}(s))\Big|ds\right]\\ \quad  +\widehat{E}\left[\int_{(0,t]}\Big|\l_{ij}(s,x^{{Q}^{\bar u}},{Q}^{\bar u}(s),\bar{u}(s))-\l_{ij}(s,x^{\widetilde{Q}^{\bar u}},\widetilde{Q}^{\bar u}(s),\bar{u}(s))\Big|^2\, ds\right].
\end{array}
$$
Therefore, 
$$\begin{array}{lll}
\widehat{E}\left[ |x^{Q^{\bar u}}-x^{\widetilde{Q}^{{\bar u}}}|_t^2\right]\\ \qquad \le \int_{(0,t]} \widehat{E}\left[\underset{i,j: \,i\neq j}{\sum} (j-i)^2 \Big|\l_{ij}(s,x^{{Q}^{\bar u}},{Q}^{\bar u}(s),\bar{u}(s))-\l_{ij}(s,x^{\widetilde{Q}^{\bar u}},\widetilde{Q}^{\bar u}(s),\bar{u}(s))\Big| \right. \\ \quad\quad\qquad \left. + \left(\underset{i,j: \,i\neq j}{\sum} |j-i| \Big|\l_{ij}(s,x^{{Q}^{\bar u}},{Q}^{\bar u}(s),\bar{u}(s))-\l_{ij}(s,x^{\widetilde{Q}^{\bar u}},\widetilde{Q}^{\bar u}(s),\bar{u}(s))\Big| \right)^2\right]ds.
\end{array}
$$
Using (B4), we obtain
\begin{equation}\label{x-estimate}
\widehat{E}\left[ |x^{Q^{\bar u}}-x^{\widetilde{Q}^{\bar u}}|_t^2\right]\le C \int_{(0,t]}\left(\widehat{E}\left[|x^{Q^{\bar u}}-x^{\widetilde{Q}^{\bar u}}|^2_s\right]+d^2(Q^u(s),\widetilde{Q}^u(s))\right)ds.
\end{equation}
Gronwall's inequality yields 
$$
\widehat{E}\left[ |x^{Q^{\bar u}}-x^{\widetilde{Q}^{\bar u}}|_t^2\right]\le C \int_{(0,t]} d^2(Q^u(s),\widetilde{Q}^u(s))ds.
$$
Hence, in view of \eqref{margine} and \eqref{coupling} we have
$$
D^2_t(\Phi(Q),\Phi(\widetilde{Q}))\le C \int_{(0,t]} D_s^2(Q,\widetilde{Q})ds.
$$
Iterating this inequality, we obtain, for every $N>0$,
$$\begin{array}{lll}
D^2_T(\Phi^N(Q),\Phi^N(\widetilde{Q}))\le C^N\int_{(0,T]}\frac{(T-t)^{N-1}}{(N-1)!}D^2_t(Q,\widetilde{Q})dt \le  \frac{C^NT^N}{N!}D^2_T(Q,\widetilde{Q}),
\end{array}
$$
where $\Phi^N$ denotes the $N$-fold composition of the map $\Phi$. Hence, for $N$ large enough, $\Phi^N$ is a contraction which entails that $\Phi$ admits a unique fixed point.  
\end{proof}

In view of (B4), mimicking the proof of \eqref{x-square} we obtain the following 
\begin{corollary} There exists a positive constant $C$, independent of the control $u$, such that
\begin{equation}\label{x-square-g}
\underset{u\in\U}{\sup}\|P^u\|_2^2=\underset{u\in\U}{\sup}E^u[|x|_T^2]\le  Ce^{CT}\left(1+\|\xi\|_2^2\right).
\end{equation}
In particular,
\begin{equation}\label{x-square-g-0}
\|P\|_2^2=E[|x|_T^2]\le  Ce^{CT}\left(1+\|\xi\|_2^2\right).
\end{equation}
\end{corollary}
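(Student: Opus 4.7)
The plan is to adapt the derivation of \eqref{x-square} but keep the mean-field moment under the time integral so that Gronwall's inequality closes once the fixed-point identity $Q^u=P^u$ is used. Under $P^u$, representation \eqref{x-rep-2} reads
\[
x(t)=x(0)+\sum_{i,j:\,i\ne j}(j-i)N_{ij}(t),
\]
where $N_{ij}$ has $P^u$-compensator $\int_0^t I_i(r^-)\l_{ij}^u(r)\,dr$ with $\l_{ij}^u(r):=\l_{ij}(r,x,P^u\!\circ x^{-1}(r),u(r))$. Decomposing $N_{ij}=M_{ij}+\int_0^{\cdot}I_i(r^-)\l_{ij}^u(r)\,dr$ splits the jump sum into a martingale part and a compensator part. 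Doob's inequality together with the isometry \eqref{isometry} bounds the martingale contribution by
\[
E^u\Big[\sup_{s\le t}\Big|\sum_{i,j:\,i\ne j}(j-i)M_{ij}(s)\Big|^2\Big]\le 4\,E^u\Big[\int_0^t\sum_{i,j:\,i\ne j}(j-i)^2 I_i(r^-)\l_{ij}^u(r)\,dr\Big],
\]
while Cauchy--Schwarz in time bounds the compensator contribution by
\[
T\,E^u\Big[\int_0^t\Big(\sum_{i,j:\,i\ne j}|j-i|I_i(r^-)\l_{ij}^u(r)\Big)^2 dr\Big].
\]

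I then apply assumption (B2) with $p=1,2$ and $\mu=P^u\!\circ x^{-1}(r)$. Since the marginal moment satisfies
\[
\int |y|^p\,P^u\!\circ x^{-1}(r)(dy)=E^u[|x(r)|^p]\le E^u[|x|_r^p],
\]
and $(E^u[|x|_r])^2\le E^u[|x|_r^2]$ by Cauchy--Schwarz, both integrands in the previous displays are dominated by $C(1+E^u[|x|_r^2])$ with $C$ independent of $u$. Combining the two bounds yields, for every $t\le T$,
\[
E^u[|x|_t^2]\le C(1+\|\xi\|_2^2)+C(1+T)\int_0^t E^u[|x|_s^2]\,ds,
\]
and Gronwall's inequality produces \eqref{x-square-g}. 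The estimate \eqref{x-square-g-0} follows by the same argument with the intensities replaced by $g_{ij}$: (B4) takes the role of (B2), the mean-field moment is absent, and the Gronwall step is cleaner.

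The main obstacle is the circular dependence at the fixed point: in the proof of \eqref{x-square}, $\|Q^u\|_T^2$ appeared as an exogenous constant on the right-hand side, but substituting $Q^u=P^u$ naively would yield the useless inequality $\|P^u\|_2^2\le\text{const}+\|P^u\|_2^2$. I avoid this by never summoning the global moment $\|P^u\|_T^2$: because the mean-field coupling enters only through the time-$r$ marginal $P^u\!\circ x^{-1}(r)$, its contribution is controlled pointwise by $E^u[|x|_r^2]$ and hence sits naturally inside the Gronwall integrand, yielding a bound uniform in $u\in\U$.
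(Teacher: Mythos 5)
Your proof is correct and follows essentially the same route the paper intends when it says the corollary is obtained by ``mimicking the proof of \eqref{x-square}'': the one genuinely non-trivial point at the fixed point $Q^u=P^u$ is that the marginal second moment must be kept as $\int|y|^2\,P^u\!\circ x^{-1}(r)(dy)=E^u[|x(r)|^2]\le E^u[|x|_r^2]$ \emph{inside} the time integral so that Gronwall closes, and you identify and handle exactly that. The only cosmetic difference is that you run the estimate directly under $P^u$ via the compensator decomposition of $N_{ij}$ rather than on the Skorohod representation space with time-changed Poisson processes, which is legitimate here since $P^u\in\mathcal{P}_2(\Omega)$ is already known from Theorem \ref{FP-u}, so the martingale terms are genuinely square-integrable.
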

Again  mimicking the proof of \eqref{x-square} and using the Lipschitz continuity (B3) of the intensity process w.r.t.\@ $(w, u,\mu)$, we further have the following estimate of the Wasserstein distance between $P^u$ and $P^v$. The estimate below uses the expectation w.r.t. the probability measure $\wh{P}$ and $(\bar u,\bar v)$ constructed as above using the Skorohod's representation theorem.   
\begin{lemma}\label{W-u}
For every $u,v\in \U$, it holds that
\begin{equation}\label{W-uv-1}
D_T^2(P^u,P^v)\le Ce^{CT}\wh{E}[\int_{(0,T]}\delta^2(\bar u(t),\bar v(t))dt],
\end{equation}
In particular, the function $u\mapsto P^u$ from $(U,\delta)$ into $(\mathcal{P}_2(\Om), D_T)$ is Lipschitz continuous: for every $u,v\in U$, 
\begin{equation}\label{W-uv-2}
D_T(P^u,P^v)\le Ce^{CT}\delta(u,v).
\end{equation} 
\end{lemma}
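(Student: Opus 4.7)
The plan is to mimic the contraction computation in the proof of Theorem \ref{FP-u}, but this time with the two already-constructed fixed points $P^u$ and $P^v$ playing the roles of $\Phi(Q)$ and $\Phi(\widetilde Q)$. First I would apply the Skorohod representation theorem exactly as in \eqref{x-Q-u-rep} to realize $(x^u,\bar u)$ and $(x^v,\bar v)$ on a common auxiliary probability space $(\widehat\Omega,\widehat\F,\widehat P)$, driven by the same family of independent unit-rate Poisson processes $N^0_{ij}$, $i\neq j$, with $\bar u\sim u$, $\bar v\sim v$, initial value $\zeta\sim\xi$, and $\widehat P\circ (x^u)^{-1}=P^u$, $\widehat P\circ (x^v)^{-1}=P^v$. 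Since $P^u$ and $P^v$ are the fixed points, the time changes in \eqref{x-Q-u-rep} are driven by the intensities $\l_{ij}(s,x^u,P^u(s),\bar u(s))$ and $\l_{ij}(s,x^v,P^v(s),\bar v(s))$ respectively, and we may write
\begin{equation*}
x^u(t)-x^v(t)=\sum_{i,j:\,i\neq j}(j-i)\bigl[N^0_{ij}(A^u_{ij}(t))-N^0_{ij}(A^v_{ij}(t))\bigr],
\end{equation*}
with $A^u_{ij}(t):=\int_{(0,t]}\l_{ij}(s,x^u,P^u(s),\bar u(s))\,ds$ and similarly for $A^v_{ij}(t)$.

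Second, combining independence and stationarity of the Poisson clocks with the Meyer--Doob decomposition argument carried out just after \eqref{x-u.square}, the second moment of the difference is controlled by
\begin{equation*}
\widehat E\bigl[|x^u-x^v|_t^2\bigr]\le C\int_{(0,t]}\widehat E\!\left[\sum_{i,j:\,i\neq j}(j-i)^2\bigl|\l^u_{ij}-\l^v_{ij}\bigr|+\Bigl(\sum_{i,j:\,i\neq j}|j-i|\bigl|\l^u_{ij}-\l^v_{ij}\bigr|\Bigr)^{\!2}\right]\!ds,
\end{equation*}
with the shorthand $\l^u_{ij}=\l_{ij}(s,x^u,P^u(s),\bar u(s))$ and $\l^v_{ij}=\l_{ij}(s,x^v,P^v(s),\bar v(s))$. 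Applying the Lipschitz estimate (B3) with $p=1$ and $p=2$ collapses the double sum into
\begin{equation*}
\widehat E\bigl[|x^u-x^v|_t^2\bigr]\le C\int_{(0,t]}\widehat E\bigl[|x^u-x^v|_s^2+d^2(P^u(s),P^v(s))+\delta^2(\bar u(s),\bar v(s))\bigr]\,ds.
\end{equation*}

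Third, the mean-field term is absorbed by \eqref{margine} together with the coupling inequality \eqref{coupling}, which here read $d^2(P^u(s),P^v(s))\le D_s^2(P^u,P^v)\le\widehat E[|x^u-x^v|_s^2]$. Gronwall's lemma applied to $t\mapsto\widehat E[|x^u-x^v|_t^2]$ then produces
\begin{equation*}
\widehat E\bigl[|x^u-x^v|_T^2\bigr]\le Ce^{CT}\,\widehat E\!\left[\int_{(0,T]}\delta^2(\bar u(t),\bar v(t))\,dt\right],
\end{equation*}
and one further application of \eqref{coupling} delivers \eqref{W-uv-1}. The estimate \eqref{W-uv-2} is then the specialization to constant actions $u,v\in U$ viewed as elements of $\U$: in that case the Skorohod coupling forces $\bar u=u$ and $\bar v=v$ $\widehat P$-a.s., so the final integral reduces to $T\delta^2(u,v)$.

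The main obstacle is precisely the nonlocal dependence of the intensities on the marginal laws $P^u(s)$ and $P^v(s)$: a pathwise Gronwall argument on $|x^u-x^v|_t^2$ does not close on its own, because $P^u(s)$ and $P^v(s)$ are not functions of any single trajectory. The key is that, since $x^u$ and $x^v$ have been built on the \emph{same} $(\widehat\Omega,\widehat\F,\widehat P)$, the Wasserstein distance $d(P^u(s),P^v(s))$ is dominated by $\widehat E[|x^u-x^v|_s^2]^{1/2}$, which is exactly the left-hand quantity of the Gronwall inequality and closes the loop. Note that assumption (B1) plays no role in this argument; only (B2)--(B3) and the Skorohod construction of Theorem \ref{FP-u} are used.
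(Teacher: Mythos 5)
Your proposal is correct and follows exactly the route the paper intends: the paper omits the details of Lemma \ref{W-u} but explicitly says the estimate is obtained by "mimicking the proof of \eqref{x-square}" via the Skorohod coupling with $(\bar u,\bar v)$, the Meyer--Doob moment bound, (B3), and Gronwall, which is precisely what you carry out. Your closing observation --- that the mean-field term $d^2(P^u(s),P^v(s))$ closes under Gronwall here (unlike in the contraction step of Theorem \ref{FP-u}) because $P^u$ and $P^v$ are the fixed points and hence are themselves the laws of the coupled trajectories --- is the right key point.
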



\medskip
In the rest of the paper, we let $P$ be the probability under which $x$ is a time-homogeneous Markov chain such that $P\circ x^{-1}(0)=\xi$ and with deterministic time-independent $Q$-matrix $(g_{ij})_{ij}$ satisfying \eqref{G}. 
\section{The risk sensitive control problem}
Given an admissible control $u\in\U$, we consider the probability measure $P^u$ on $(\Omega,\F)$ under which the coordinate process $x$ is a pure jump process with intensity $\l^u(t)=(\l_{ij}^u(t))$, where 
\begin{equation}\label{lambda-u}
\l^u(t):=\l(t,x,P^u\circ x^{-1}(t),u(t)),\quad 0\le t\le T.
\end{equation}
The payoff functional $J(u),\,\, u\in\U$, associated with the controlled probability measure $P^u$ is 
\begin{equation}\label{J-u}
J(u):=E^u\left[\exp{\left(\int_0^T f(t,x,P^u\circ x_t^{-1},u_t)dt+ h(x_T,P^u\circ x_T^{-1}\right)}\right],
\end{equation}

We want to find an optimal control $u^*\in\U$ for which
\begin{equation}\label{opt-J}
J(u^*)=\underset{u\in\U}{\min}J(u),
\end{equation}
and characterize the optimal cost functional $J(u^*)$. The corresponding optimal dynamics is given by the probability measure $P^*$ on $(\Om,\F)$ 
under which the coordinate process $x$ is a pure jump process with intensity $\l^{u^*}(t),\,0\le t\le T$. This is achieved by characterizing the risk sensitive payoff $J(u)$ of the control problem given by \eqref{opt-J} in terms of an entropic BSDE and the comparison result for solutions of Markov chain BSDEs (see Proposition \eqref{comparison}). 

We first recall the definition of an entropic Markov chain BSDE and given some further estimates and properties of the controlled intensities $\l^u$ needed below.

\begin{definition}\label{entropic-bsde}
A process $(Y,Z)$ defined on $(\Om,\mathcal{F},\mathbb{F},P)$ is called solution of an entropic Markov chain BSDE with data $(\phi,\zeta)$ if it satisfies
\begin{equation}\label{Y-bsde1}\left\{\begin{array}{lll}
-dY(t)=\{ \phi(t,\omega,Y(t^-),Z(t))+\langle \tau(Z(t)),1\rangle_g\}dt-Z(t)dM(t),\\  Y(T)=\zeta,
\end{array}
\right.
\end{equation}
where $\tau(z):=e^z-z-1$ is the convex conjugate of the function $\tau^*(z):=z\ln{z}-z+1$ which is the entropy associated with the Poisson process with intensity 1.
\end{definition}

Combining \eqref{G}, (B2), \eqref{x-square-g} and \eqref{x-square-g-0}, we obtain the following
\begin{lemma} There exists a positive constant $C_T$ independent of the controls $u$  such that
\begin{equation}\label{lambda-norm}
E\left[\int_{(0,T]}\mathrm{ess}\sup_{u\in \U}\|\l^u(t)\|^2_gdt\right]\le C_T.
\end{equation}
\end{lemma}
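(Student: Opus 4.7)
The plan is to reduce the estimate to a pointwise $\omega$-wise bound of the form $\|\l^u(t)\|_g^2\le C(1+|x|_t^2)$ that is \emph{uniform} in $u\in\U$, and then to integrate using the already-established second-moment bounds on $x$ under $P^u$ and $P$.

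First I would extract a crude but crucial consequence of (B4): the summability $\sum_{i,j:\,j\neq i}|j-i|^2 g_{ij}<\infty$ together with $|j-i|\ge 1$ forces $K:=\sup_{i\neq j}g_{ij}<\infty$, so the $g$-weight can be pulled out of $\|\cdot\|_g^2$ uniformly. Then, since states live in $I=\N$, any two distinct states satisfy $|j-i|\ge 1$, hence the bare row sum is dominated by the first-moment row sum of (B2) with $p=1$, applied to $w=x$, $\mu=P^u\circ x^{-1}(t)$ and $i=x(t^-)$:
\begin{equation*}
\sum_{j:\,j\neq x(t^-)}\l^u_{x(t^-),j}(t)\le\sum_{j:\,j\neq x(t^-)}|j-x(t^-)|\,\l^u_{x(t^-),j}(t)\le C\bigl(1+|x|_t+E^u[|x(t)|]\bigr).
\end{equation*}

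Combining this with the elementary inequality $\sum_j a_j^2\le(\sum_j a_j)^2$ for nonnegative $a_j$ and with Step~1,
\begin{equation*}
\|\l^u(t)\|_g^2=\sum_{j:\,j\neq x(t^-)}|\l^u_{x(t^-),j}(t)|^2 g_{x(t^-),j}\le K\Bigl(\sum_{j:\,j\neq x(t^-)}\l^u_{x(t^-),j}(t)\Bigr)^2\le C\bigl(1+|x|_t^2+E^u[|x(t)|^2]\bigr),
\end{equation*}
after using $(a+b+c)^2\le 3(a^2+b^2+c^2)$. By \eqref{x-square-g}, $\sup_{u\in\U}E^u[|x(t)|^2]\le\sup_{u\in\U}E^u[|x|_T^2]\le Ce^{CT}(1+\|\xi\|_2^2)$ is a \emph{deterministic} finite constant, so the right-hand side above is itself dominated pointwise by $C(1+|x|_t^2)$ \emph{independently of} $u$. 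Hence this majorant dominates the essential supremum:
\begin{equation*}
\mathrm{ess}\sup_{u\in\U}\|\l^u(t)\|_g^2\le C(1+|x|_t^2)\qquad P\text{-}\as
\end{equation*}
Taking expectation under $P$, integrating over $[0,T]$, and invoking \eqref{x-square-g-0} to bound $E[|x|_T^2]$ yields the claim with an explicit $C_T$.

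I expect no serious obstacle. The only point requiring care is that the essential supremum in $u$ passes cleanly inside the expectation, which is immediate here precisely because the dominating bound $C(1+|x|_t^2)$ is $u$-independent; the rest is a routine assembly of (B2), (B4), the integer-lattice property $|j-i|\ge 1$, and the a priori moment estimates \eqref{x-square-g} and \eqref{x-square-g-0}.
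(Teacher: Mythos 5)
Your proof is correct and follows essentially the same route as the paper: pull the (finite, by (B4)) weight $g_{ij}$ out of $\|\cdot\|_g^2$, dominate the squared row sum via (B2) with $p=1$ and $|j-i|\ge 1$, and then absorb the $u$-dependence using the uniform moment bound \eqref{x-square-g} before integrating with \eqref{x-square-g-0}. The only cosmetic difference is that you use $\sup_{i\neq j}g_{ij}$ where the paper uses $\sum_{i\neq j}g_{ij}$; both are finite for the same reason.
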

\begin{proof}
We have
$$
\begin{array}{lll}
E[\int_{(0,T]} \underset{u\in \U}{\mathrm{ess}\sup\,} \|\l^u(t)\|^2_gdt]\le \left(\underset{i,j: i\neq j}\sum g_{ij}\right) E\left[\int_{(0,T]} \underset{u\in \U}{\mathrm{ess}\sup\,}\left(\underset{i,j: i\neq j}\sum\l_{ij}^u(t)\right)^2dt\right] \\ \qquad\qquad\qquad\qquad\le C  \left(\underset{i,j: i\neq j}\sum g_{ij}\right)(1+E[|x|^2_{T}]+\sup_{u\in \U}\|P^u\|^2)\\ \qquad\qquad\qquad\qquad \le  C  \left(\underset{i,j: i\neq j}\sum g_{ij}\right)\left(1+E[|x|^2_{T}]+e^{CT}\left(1+\|\xi\|_2^2\right)\right):=C_T.
\end{array}
$$
\end{proof}



\ms\noindent For $(t,u)\in [0,T]\times \U$, set 
$$
\ell^u(t):=\ell(t,x,P^u\circ x^{-1}(t),u(t)),
$$
where
\begin{equation}\label{ell-u}
\ell^u_{ij}(t):=\left\{\begin{array}{rl} \frac{\l^u_{ij}(t)}{g_{ij}} -1 &\text{if }\,\, i\neq j,\\ 0 & \text{if }\,\, i=j,
\end{array}
\right.
\end{equation}
Using \eqref{lambda-norm}, we readily obtain 
\begin{equation}\label{ell-norm}
E\left[\int_{(0,T]}\mathrm{ess}\sup_{u\in \U}\|\ell^u(t)\|^2_gdt\right]\le C_T,
\end{equation}
where $C_T$ is a positive constant $C_T$ independent of the controls $u$.
 
Consider the Dol{\'e}ans-Dade exponential
\begin{equation}\label{exp-mg}
L^u_t:=\underset{\substack{i,j\\ i\neq j}}\prod \exp{\left\{ \int_{(0,t]}\ln{\frac{ \l^{u}_{ij}(s)}{g_{ij}}}\,dN_{ij}(s)-\int_{(0,t]}( \l^{u}_{ij}(s)-g_{ij})I_i(s)ds  \right\}},
\end{equation}
which is the solution of the following linear stochastic integral equation
\begin{equation}\label{L-sde-1}
L^u_t=1+\int_{(0,t]} L^u_{s^-}\underset{i,j:\, i\neq j}\sum I_i(s^-)\ell^u_{ij}(s)dM_{ij}(s),
\end{equation}
where $(M_{ij})_{ij}$ is the $P$-martingale given in \eqref{mart-1}. We have the following Girsanov-type result.  

\begin{proposition}[{\bf Girsanov density}]\label{L-u-mart} Assume $\l^u$ and $\xi$ satisfy (B1)-(B4). Then, $L^u$ is a $P$-martingale. Furthermore, $dP^u:=L^u(T) dP$.
\end{proposition}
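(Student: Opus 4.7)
The plan is to proceed in two stages: first establish that $L^u$ is a true $P$-martingale, then identify the measure $L^u_T\cdot P$ with $P^u$ via Girsanov's theorem for marked point processes. From \eqref{L-sde-1}, $L^u$ is a stochastic integral against the $P$-martingales $M_{ij}$, hence a $P$-local martingale. Positivity of the intensities from (B1) together with the exponential representation \eqref{exp-mg} ensures $L^u>0$ on $[0,T]$, so $L^u$ is a non-negative local martingale and therefore a $P$-supermartingale with $E[L^u_t]\le 1$. To upgrade this to a true martingale, it suffices to prove $E[L^u_T]=1$. The natural route is to localize by stopping times $\tau_n:=\inf\{t:L^u_t\ge n\}\wedge T$, giving $E[L^u_{\tau_n}]=1$, and then pass to the limit via uniform integrability. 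The $L^2$-bound \eqref{ell-norm} and Doob's inequality \eqref{Doob} yield $L^2$-control on the stochastic integral in \eqref{L-sde-1}, while (B2) and the a priori moment bound \eqref{x-square-g-0} on $E[|x|_T^2]$ guarantee integrability of the compensator $\int_0^T\sum_{i\ne j}\lambda^u_{ij}(s)I_i(s^-)\,ds$. This is precisely the classical sufficient condition (cf.\@ Br\'emaud's criterion, or Jacod--Shiryaev, VIII.3) for the Dol\'eans-Dade exponential of a compensated counting process to be a uniformly integrable $P$-martingale.

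For the identification, set $d\widetilde P^u:=L^u_T\,dP$. Since $L^u$ is a true positive $P$-martingale with $E[L^u_T]=1$, $\widetilde P^u$ is a well-defined probability measure equivalent to $P$. Girsanov's theorem for marked point processes, applied to the exponential density \eqref{exp-mg}, yields that the $\widetilde P^u$-compensator of each $N_{ij}$ is
\begin{equation*}
\int_{(0,t]}(1+\ell^u_{ij}(s))\,I_i(s^-)g_{ij}\,ds=\int_{(0,t]}\lambda^u_{ij}(s)I_i(s^-)\,ds,
\end{equation*}
so under $\widetilde P^u$ the coordinate process $x$ is a pure jump process with intensities $\lambda^u(t)=\lambda(t,x,P^u\circ x^{-1}(t),u(t))$ and initial law $\xi$. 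These are the very intensities that characterize $P^u$ as produced by Theorem \ref{FP-u}, and the uniqueness part of that theorem (equivalently, uniqueness of the martingale problem with these intensities) forces $\widetilde P^u=P^u$; hence $dP^u=L^u_T\,dP$.

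The main obstacle is Stage 1: because $\lambda^u$ is unbounded, $L^u$ is only a priori a local martingale, and positivity alone does not force the martingale property. The integrability inputs supplied by (B2)--(B4) via \eqref{ell-norm} and \eqref{x-square-g-0} are what close the localization-plus-uniform-integrability argument; without the linear-growth control in (B2) one would typically have to replace this by a Novikov-type exponential moment assumption, which is not available in the present setting.
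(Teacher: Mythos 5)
Your Stage 2 (identification of $L^u_T\cdot P$ with $P^u$ via the uniqueness of the law with intensities $\l^u$) is unobjectionable, but it is not where the content of the proposition lies; the paper's proof devotes itself entirely to your Stage 1, and that is precisely where your argument has a gap. The condition you invoke --- integrability of the compensator $\int_0^T\sum_{i\neq j}\l^u_{ij}(s)I_i(s^-)\,ds$ together with the $L^2$ bound \eqref{ell-norm} on $\ell^u$ --- is \emph{not} a classical sufficient condition for the Dol\'eans--Dade exponential of a compensated counting process to be a true martingale. Br\'emaud's Theorem T11 requires the new intensities to be \emph{bounded}, and the general criteria (Novikov/L\'epingle--M\'emin type) require exponential moments; mere $L^1$ or $L^2$ control of the intensity does not rule out strict supermartingales. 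Moreover, the $L^2$ bound \eqref{ell-norm} controls $\ell^u$, not the actual integrand $L^u_{s^-}\ell^u_{ij}(s)$ of \eqref{L-sde-1}, so using Doob's inequality there presupposes a bound on $L^u$ that you do not have --- the argument is circular. Finally, for a nonnegative local martingale the uniform integrability of $(L^u_{\tau_n})_n$ is \emph{equivalent} to $E[L^u_T]=1$, so "pass to the limit via uniform integrability" is an assertion of the conclusion, not a proof of it.

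The paper closes this gap with a different localization that you should compare with yours: it truncates the \emph{intensity}, setting $\l^n_{ij}(t):=\l^u_{ij}(t)\mathbf{1}_{\{|x|_t\le n\}}$, so that by the linear growth (B2) and the moment bound \eqref{x-square-g} each $\l^n$ is bounded and the associated exponential $L^n$ is a genuine martingale by Br\'emaud's theorem, defining probability measures $dP^n=L^n_TdP$. Since $|x|_T<\infty$ a.s., $L^n_T\to L^u_T$ a.s., and the whole point is to prove uniform integrability of $(L^n_T)_n$. The key device --- absent from your proposal --- is to estimate the tail \emph{under the tilted measures}: with $\t_m=\inf\{t:|x|_t\ge m\}$ one has $\int_{\{\t_m\le T\}}L^n_T\,dP=P^n(|x|_T\ge m)\le E^n[|x|_T]/m\le C/m$ with $C$ independent of $n$, thanks to the uniform a priori second-moment estimate \eqref{x-square-g} for the controlled chain; combined with $L^n_{T\wedge\t_{m_0}}=L^{m_0}_{T\wedge\t_{m_0}}$ for $n\ge m_0$, this reduces the supremum over $n$ to a maximum over finitely many bona fide martingales and yields uniform integrability, hence $E[L^u_T]=1$. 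Without this (or some equivalent exponential-moment argument), your Stage 1 does not go through.
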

\no For a  proof  of the proposition see the appendix.  

An important consequence of this proposition is that, under $P^u$, the  processes 
\begin{equation}\label{m-u-bsde}
M^u_{ij}(t):=M_{ij}(t)-\int_{(0,t]} \ell^u_{ij}(s)I_i(s^-)g_{ij}ds
\end{equation}
are zero mean, square integrable and mutually orthogonal martingales  whose  predictable quadratic variations are
\begin{equation}\label{m-u-bsde-q}
\langle  M^u_{ij}\rangle_t=\int_{(0,t]} I_i(s^-) \l^u_{ij}(s)ds.
\end{equation} 

In the next corollary we display an extension of Proposition \eqref{L-u-mart} to intensities involving the essential infimum and supremum of $\l^u$ w.r.t. $u$, that will be used below, such as the following case. Let $\phi=(\phi_{ij})_{ij}$ and $\bar\phi=(\bar\phi_{ij})_{ij}$ be predictable process and define the predictable process $\wh\l(t,x)=(\wh\l_{ij}(t,x))_{ij}$ (depending on $\phi,\bar\phi)$) given by  
\begin{equation}\label{l-hat}
\wh\l_{ij}(t,x):=\underset{u\in \U}{\mathrm{ess}\inf\,}\l_{ij}^u(t)\mathbf{1}_{\{\phi_{ij}(t)>\bar\phi_{ij}(t)\}}+\underset{u\in \U}{\mathrm{ess}\sup\,}\l_{ij}^u(t)\mathbf{1}_{\{\phi_{ij}(t)\le \bar\phi_{ij}(t)\}}, \,\,\, i\neq j.
\end{equation}
In view of \eqref{x-square-g}, it is readily seen that $\wh\l$ satisfies similar conditions as (B1)-(B3). In particular, 
\begin{equation*}
\underset{i\neq j}\sum\,\wh\l_{ij}(t,x)\le C(1+|x|_t+\underset{u\in \U}{\sup\,}\|P^u\|_2)\le C_T(1+\|\xi\|_2+|x|_t),
\end{equation*} 
where $C_T$ is a positive constant independent of $u$. Moreover, it is easily seen that the predictable process  $\wh\ell(t,x)=(\wh\ell_{ij}(t,x))_{ij}$ defined by 
$$
\wh{\ell}_{ij}(t,x)=\frac{\wh\l_{ij}(t,x)}{g_{ij}}-1,\,\,i\neq j,\,\quad \wh{\ell}_{ii}(t,x)=0,
$$
satisfies
\begin{equation}\label{l-hat}
\wh{\ell}_{ij}(t,x):=\underset{u\in \U}{\mathrm{ess}\inf\,}\ell_{ij}^u(t)\mathbf{1}_{\{\phi_{ij}(t)>\bar\phi_{ij}(t)\}}+\underset{u\in \U}{\mathrm{ess}\sup\,}\ell_{ij}^u(t)\mathbf{1}_{\{\phi_{ij}(t)\le \bar\phi_{ij}(t)\}}, \,\,\, i\neq j.
\end{equation}
Mimicking the proof of Proposition \eqref{L-u-mart} we obtain the following

\begin{corollary}\label{L-l-hat} Let $\wh\l$ be an intensity process satisfying
\begin{equation}\label{l-hat-est}
\underset{i\neq j}\sum\,\wh\l_{ij}(t,x)\le C_T(1+\|\xi\|_2+|x|_t),
\end{equation} 
where $C_T$ is a positive constant independent of $u$.
Then the  Dol{\`e}ans-Dade process $\wh L$ given by
\begin{equation*}
\wh{L}_t=1+\int_{(0,t]} \wh{L}_{s^-}\underset{i,j:\, i\neq j}\sum I_i(s^-)\wh\ell_{ij}(s)dM_{ij}(s)
\end{equation*}
is a true martingale. Moreover, under the probability measure $\wh{P}$ defined by $d\wh{P}:=\wh{L}(T) dP$, the  processes 
\begin{equation}\label{m-hat-bsde}
\wh{M}_{ij}(t):=M_{ij}(t)-\int_{(0,t]} \wh{\ell}_{ij}(s)I_i(s^-)g_{ij}ds
\end{equation}
are zero mean, square integrable and mutually orthogonal martingales  whose  predictable quadratic variations are
\begin{equation}\label{m-hat-bsde}
\langle  \wh{M}_{ij}\rangle_t=\int_{(0,t]} I_i(s^-)\wh\l_{ij}(s)ds.
\end{equation} 
\end{corollary}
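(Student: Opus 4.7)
The plan is to follow the template of Proposition \ref{L-u-mart}'s proof almost verbatim, with $\widehat{\l}$ playing the structural role of $\l^u$. The growth bound \eqref{l-hat-est} is precisely what is needed to replace the combination of (B2) with the moment estimate \eqref{x-square-g} used in the original proof, so no genuinely new estimate is required; one only needs to check that each step goes through with $\widehat{\l}$ in place of $\l^u$.

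First, $\widehat{L}$ is the solution of the linear jump SDE driven by the $P$-martingale $M$ given in the statement, hence a non-negative local martingale, hence a $P$-supermartingale. The main obstacle is upgrading this to a true martingale, i.e., proving $E[\widehat{L}_T]=1$. I would localize by $\tau_n:=\inf\{t : |x(t)|\ge n\}\wedge T$: on $[0,\tau_n]$, \eqref{l-hat-est} forces $\sum_{i\neq j}\widehat{\l}_{ij}$ to be bounded, so the stopped exponential $\widehat{L}^{\tau_n}$ is a bona fide martingale by the standard integrability criterion for Dol\'eans-Dade exponentials of counting-process integrals. To pass to the limit $n\to\infty$, I would apply Gr\"onwall's lemma in the auxiliary measure $dQ_n:=\widehat{L}_{T\wedge\tau_n}dP$ to control $E_{Q_n}[|x|^2_{T\wedge\tau_n}]$, combine this with \eqref{x-square-g-0} to show $P(\tau_n<T)\to 0$ and that $(\widehat{L}_{T\wedge\tau_n})_n$ is uniformly integrable, and conclude $E[\widehat{L}_T]=\lim_n E[\widehat{L}_{T\wedge\tau_n}]=1$.

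Once $\widehat{L}$ is a true martingale, $d\widehat{P}:=\widehat{L}_T\,dP$ defines a probability measure on $(\Om,\F)$. By the Girsanov theorem for point processes (equivalently, by direct computation of the $\widehat{P}$-predictable compensator of $N_{ij}$ using the density $\widehat{L}$ and the relation $\widehat{\ell}_{ij}=\widehat{\l}_{ij}/g_{ij}-1$), the $\widehat{P}$-compensator of $N_{ij}$ is
\[
\int_{(0,t]} I_i(s^-)g_{ij}\bigl(1+\widehat{\ell}_{ij}(s)\bigr)\,ds \;=\; \int_{(0,t]} I_i(s^-)\widehat{\l}_{ij}(s)\,ds.
\]
Hence $\widehat{M}_{ij}$ defined in \eqref{m-hat-bsde} is a $\widehat{P}$-local martingale with the stated predictable quadratic variation.

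For square-integrability, I would repeat the Gr\"onwall argument used in the proof of Theorem \ref{FP-u}, but now in the measure $\widehat{P}$ and with $\widehat{\l}$ as the intensity, to obtain $\widehat{E}[|x|_T^2]<\infty$; combined with \eqref{l-hat-est} this gives $\widehat{E}[\langle \widehat{M}_{ij}\rangle_T]<\infty$, so each $\widehat{M}_{ij}$ is a genuine square-integrable martingale. Mutual orthogonality is immediate from the fact that distinct counting processes $N_{ij}$ and $N_{kl}$ cannot jump simultaneously, so their optional covariation, and hence the covariation of their compensated versions, vanishes identically for $(i,j)\neq(k,l)$.
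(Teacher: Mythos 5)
Your proposal is correct and follows essentially the same route as the paper, which proves this corollary by ``mimicking'' the appendix proof of Proposition \ref{L-u-mart}: localize so the intensity is bounded (the paper truncates $\widehat{\l}$ by the indicator $\mathbf{1}_{\{|x|_t\le n\}}$ rather than stopping at $\tau_n$, a cosmetic difference), invoke the bounded-intensity martingale criterion for each truncated exponential, and upgrade to $E[\widehat{L}_T]=1$ via uniform integrability obtained from a Gr\"onwall/Chebyshev moment bound under the truncated measures. The Girsanov identification of the $\widehat{P}$-compensator, the square integrability, and the orthogonality from non-simultaneous jumps are likewise handled exactly as in the paper's treatment following Proposition \ref{L-u-mart}.
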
 

\subsection{An entropic BSDE characterization of the risk sensitive payoff} 
In the next proposition we show that the risk sensitive payoff functional $J(u)$ can be expressed in terms of the unique solution of an entropic Markov chain BSDE. 

In the rest of the paper, we will assume that $\F_0$ is the trivial $\s$-algebra which implies that $x(0)$ is a given deterministic point $x_0$ in $I$.
 
Given $z\in \R^{I\times I}$, the set of real-valued $I\times I$-matrices, we introduce the Hamiltonian associated with the optimal control problem (\ref{J-u})
\begin{equation}\label{ham-u}
H(t,x,u,z):=f(t,x,P^u\circ x^{-1}(t),u(t))+ \langle \ell^u(t),e^{z}-1\rangle_g,
\end{equation}
where
$$
\langle\ell^u(t),e^z-1\rangle_g:=\underset{i,j:\, i\neq j}\sum (e^{z_{ij}}-1)\ell^u_{ij}(t)g_{ij}\mathbf{1}_{\{x(t^-)=i\}}.
$$

\begin{proposition}\label{u-bsde} Assume the conditions (B1)-(B4) hold and that  $f$ and $h$ are uniformly bounded. Then, for any admissible control $u\in \U$,  the entropic BSDE 
\begin{equation}\label{u-yz-bsde}\left\{\begin{array}{lll}
-dY^u(t)=\{ H(t,x,u,Z^u(t))+\langle \tau(Z^u(t)),1\rangle_{g}\} dt-Z^u(t)dM(t),\\ \quad 
Y^u(T)=h(x(T),P^u\circ x^{-1}(T)).
\end{array}
\right.
\end{equation} 
admits a unique solution $(Y^u,Z^u)$ for which $Y^u$ is bounded and 
\begin{equation}\label{u-bsde-estim}
E\left[\int_{(0,T]} \|e^{Z^u(s)}-1\|_{g}^2ds\right]<\infty .
\end{equation}

\ms Furthermore, $Y^u$ is explicitly given by the formula 
\begin{equation}\label{Y-u}
e^{Y^u(t)}=E^u\left[\exp{\{h(x_T,P^u\circ x_T^{-1})+\int_t^T f(s,x,P^u\circ x_s^{-1},u_s)ds\}}\large|\F_t\right].
\end{equation}
In particular, $J(u)=e^{Y^u(0)}$.
\end{proposition}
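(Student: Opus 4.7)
The plan is to reduce the entropic BSDE \eqref{u-yz-bsde} to a linear $P^u$-martingale problem via the exponential transformation $V := e^{Y^u}$, producing $Y^u$ explicitly as a conditional expectation and extracting $Z^u$ via the Markov-chain martingale representation theorem under $P^u$.

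First I would introduce the candidate
\begin{equation*}
V(t) := E^u\Bigl[\exp\Bigl(h(x(T), P^u\circ x^{-1}(T)) + \int_t^T f(s,x,P^u\circ x^{-1}(s),u(s))\,ds\Bigr)\Big|\F_t\Bigr]
\end{equation*}
and set $Y^u := \log V$. Uniform boundedness of $f$ and $h$ yields constants $0<\alpha\le\beta<\infty$ with $\alpha\le V(t)\le\beta$ a.s., so $Y^u$ is bounded and $Y^u(T)=h(x(T),P^u\circ x^{-1}(T))$. The process $\tilde V(t):=V(t)\exp(\int_0^t f\,ds)$ is then a bounded $P^u$-martingale, and the desired identity \eqref{Y-u} holds by construction; evaluating at $t=0$ with $\F_0$ trivial gives $J(u)=e^{Y^u(0)}$.

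Next I would invoke the martingale representation theorem for Markov chains under $P^u$ (Cohen--Elliott \cite{Cohen2012,Cohen2015}), applied to the fundamental $P^u$-martingales $M^u_{ij}$ from \eqref{m-u-bsde}, to produce a predictable process $\zeta^u=(\zeta^u_{ij})$ with
\begin{equation*}
\tilde V=V(0)+\int_0^\cdot \zeta^u(s)\,dM^u(s),\qquad E^u\!\left[\int_0^T\|\zeta^u(s)\|^2_{\l^u}\,ds\right]<\infty .
\end{equation*}
Itô's product rule then yields $dV(t)=-V(t)f(t)\,dt+\tilde\zeta^u(t)\,dM^u(t)$ with $\tilde\zeta^u(t):=e^{-\int_0^t f\,ds}\zeta^u(t)$. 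Exploiting strict positivity of $\l^u$ from (B1) together with $V\in[\alpha,\beta]$, I define predictably
\begin{equation*}
Z^u_{ij}(t):=\log\!\bigl(1+\tilde\zeta^u_{ij}(t)/V(t^-)\bigr)\mathbf{1}_{\{I_i(t^-)=1\}},
\end{equation*}
so that $\tilde\zeta^u_{ij}(t)=V(t^-)(e^{Z^u_{ij}(t)}-1)$ on the relevant support. Since any actual jump $i\to j$ leaves $V$ inside $[\alpha,\beta]$ with positive rate, $|Z^u_{ij}|\le \log(\beta/\alpha)$ $dP\otimes I_i(t^-)g_{ij}\,dt$-a.e.; the integrability \eqref{u-bsde-estim} then follows from (B4).

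Finally I would apply Itô's formula to $Y^u=\log V$. Rewriting the $M^u$-integral via $dM^u_{ij}=dM_{ij}-\ell^u_{ij}I_i g_{ij}\,dt$ contributes $-\langle\ell^u,e^{Z^u}-1\rangle_g\,dt$ to the drift, while the Itô jump correction $\sum_{s\le t}[\log V(s)-\log V(s^-)-\Delta V(s)/V(s^-)]=-\sum_{s\le t}\tau(Z^u_{x(s^-),x(s)}(s))$ contributes $-\int_0^t\tau(Z^u)dM-\int_0^t\langle\tau(Z^u),1\rangle_g\,ds$ after compensating the counting processes. The algebraic identity $(e^z-1)-\tau(z)=z$ then collapses the two stochastic integrals against $M$ into $Z^u\,dM$ and produces exactly \eqref{u-yz-bsde}. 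For uniqueness, given any bounded solution $(\bar Y,\bar Z)$ of \eqref{u-yz-bsde}, applying Itô to $\bar V:=e^{\bar Y}$ and using the same identity reverses the computation to show $\bar V\exp(\int_0^t f\,ds)$ is a bounded $P^u$-martingale with terminal value $\exp(h+\int_0^T f\,ds)$, hence $\bar V=V$, forcing $\bar Y=Y^u$ and determining $\bar Z$ up to the natural equivalence. The main obstacle I anticipate is the careful handling of the predictable version of $Z^u$ together with the positivity of the candidate post-jump values so that the logarithm is well-defined; everything else is a bookkeeping exercise in Itô's formula combined with the cancellation $(e^z-1)-\tau(z)=z$.
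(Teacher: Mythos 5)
Your proposal is correct, and it rests on the same two pillars as the paper's proof: the exponential change of variables $y=e^{Y^u+\int_0^\cdot f\,ds}$, $\varphi=y(t^-)(e^{Z^u}-1)$ turning the entropic BSDE into a linear one with driver $\langle\ell^u,\varphi\rangle_g$, and the Girsanov structure \eqref{m-u-bsde} identifying that linear BSDE with a $P^u$-conditional expectation. The difference is one of direction. The paper transforms the unknown BSDE \eqref{u-yz-bsde} into the linear BSDE \eqref{bsde-u-linear}, invokes the general existence/uniqueness result (Proposition \ref{bsde-th}) for balanced linear drivers, and only then reads off the representation \eqref{Y-u}; you instead start from the explicit candidate $V$, obtain $\varphi$ (your $\tilde\zeta^u$) from the martingale representation theorem under $P^u$, and verify \eqref{u-yz-bsde} by It\^o's formula and the cancellation $(e^z-1)-\tau(z)=z$. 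Your route is more self-contained for this particular driver, since it bypasses Proposition \ref{bsde-th} entirely, at the cost of having to justify the martingale representation under the tilted measure $P^u$ and to restrict uniqueness to the class of bounded $Y$ (the paper inherits $L^2$-uniqueness from the linear theory). One genuine merit of your write-up: the positivity $1+\tilde\zeta^u_{ij}(t)/V(t^-)>0$ needed to define $Z^u_{ij}=\log(1+\tilde\zeta^u_{ij}/V(t^-))$ $dP\otimes I_i(t^-)g_{ij}\,dt$-a.e.\ is exactly the condition the paper needs (and silently assumes) to invert its transformation via $Z^u=\ln((y^u)^{-1}(t^-)\varphi^u+1)$; your argument from the strict positivity of $\l^u_{ij}$ in (B1) and the bounds $\alpha\le V\le\beta$ is the right way to close that point, and making it fully rigorous (ruling out a predictable set of positive $dP\otimes I_i(t^-)\l^u_{ij}\,dt$-measure where the post-jump value would leave $[\alpha,\beta]$) is the only step that deserves more than the sentence you give it.
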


\begin{proof}
Let $|f|_{\infty}$ and $|h|_{\infty}$ denote the uniform bounds of $f$ and $h$. Performing the change of variable $y^u(t):=e^{Y^u(t)+\int_0^t f(s,x,P^u\circ x^{-1}(s),u(s))ds}, \,\, \psi^u(t):=e^{Z^u(t)}-1$ and applying It\^{o}'s lemma, the process $(y^u,\psi^u)$ satisfies the BSDE
$$
\left\{\begin{array}{lll}
-dy^u(t)=\langle \ell^u(t),y^u(t^-)\psi^u(t)\rangle_g dt-y^u(t^-)\psi^u(t)dM(t),\\ 
\quad y^u(T)=\exp{\{h(x_T,P^u\circ x_T^{-1})+\int_0^Tf(s,x,P^u\circ x^{-1}(s),u(s))ds\}},
\end{array}
\right.
$$
Letting further $\varphi^u(t):=y^u(t^-)\psi^u(t)$, the process $(y^u,\varphi^u)$ satisfies the following linear BSDE
\begin{equation}\label{bsde-u-linear}
\left\{\begin{array}{lll}
-dy^u(t)=\langle \ell^u(t),\varphi^u(t)\rangle_gdt-\varphi^u(t)dM(t),\\ \quad y^u(T)=\exp{\{h(x_T,P^u\circ x_T^{-1})+\int_0^Tf(s,x,P^u\circ x^{-1}(s),u(s))ds\}},
\end{array}
\right.
\end{equation}
for which the driver $\langle \ell^u(t),z\rangle_g$ is (stochastic) Lipschitz:
$$
|\langle \ell^u(t),z^1\rangle_g-\langle \ell^u(t),z^2\rangle_g|\le \|\ell^u(t)\|_g \|z^1-z^2\|_g(t),
$$
where, in view of \eqref{lambda-norm}, $\|\ell^u(t)\|_g \in L^2([0,T]\times \Omega,dt\otimes dP)$. By Proposition \eqref{bsde-th}, the BSDE \eqref{bsde-u-linear} admits a unique solution for which 
$$
E\left[\int_{(0,T]} \|\varphi^u(s)\|^2_gds\right]<\infty.
$$
Furthermore, noting that by \eqref{m-u-bsde}, under $P^u$, the process
$$
\int_{(0,t]}\varphi^u(s)dM^u(s)=\int_{(0,t]}\varphi^u(s)dM(s)-\int_{(0,t]}\langle \ell^u(s),\varphi^u(s)\rangle_gds
$$
is a martingale, we may  take the conditional expectation w.r.t. $\F_t$  to obtain that
\begin{equation}\label{y-u-exp}
y^u(t)=E^u\left[\exp{\{h(x_T,P^u\circ x_T^{-1})+\int_0^T f(s,x,P^u\circ x_s^{-1},u_s)ds\}}\large|\F_t\right] 
\end{equation}
which obviously satisfies following estimate
\begin{equation}\label{bounds-u-bsde}
 \ln{|y^u|_T} \le |h|_{\infty}+|f|_{\infty}T,\,\, \,P\text{-}\as
\end{equation} 
Since the transformation $(Y^u,Z^u)\mapsto (y^u,\varphi^u)$ defined by
$$
y^u(t)=e^{Y^u(t)+\int_0^tf(s,x,P^u\circ x_s^{-1},u_s)ds}, \quad Z^u(t):=\ln(({y^u})^{-1}(t^-)\varphi^u(t)+1),\,\, 0\le t\le T,
$$ 
is one-to-one, a unique solution $(Y^u,Z^u)$ to the BSDE \eqref{u-yz-bsde} thus exists and, due to \eqref{y-u-exp}, $Y^u$ satisfies \eqref{Y-u}.  In particular, $J(u)=e^{Y^u(0)}$, since $\F_0$ is the trivial $\sigma$-algebra.

Since $|Y^u|_t\le \ln|y^u|_t$, using the estimate \eqref{bounds-u-bsde}, $Y^u$ is indeed bounded. Moreover, we have
$$
E\left[\int_{(0,T]} \|e^{Z^u(s)}-1\|_{g}^2ds\right]<\infty.
$$
\end{proof}

\subsection{Existence of an optimal control}
In the remaining part of this section we want to characterize controls $u^*\in\U$ such that $u^*=\arg\min_{u\in\U}J(u)$. A way to find such an optimal control is to proceed as in Proposition \ref{u-bsde} and introduce an entropic BSDE whose solution $Y^*$ satisfies $Y^*_0=\inf_{u\in\U}J(u)$. Then, by comparison, the problem can be reduced to  minimizing the corresponding Hamiltonian $H$ given by \eqref{ham-u} and the terminal value $h$ w.r.t. the control $u$.  

\ms \no Let $\mathbb{L}$ denote the $\sigma$-algebra of progressively measurable sets on $[0,T]\times\Omega$. For  $z\in \R^{I\times I}$, the set of real-valued $I\times I$-matrix, set
\begin{equation}\label{def-H}
H(t,x,z,u):=H(t,x,P^u\circ x_t^{-1},z,u_t).
\end{equation}
Since $H$ is continuous in $z$ and a progressively measurable process, it is an $\mathbb{L}\times B(\R^{I\times I})$-random variable. We have
\begin{proposition}\label{ess-inf}
There exists an $\mathbb{L}$-measurable process $H^*$ such that, for every $z\in \R^{I\times I}$,
\begin{equation}\label{u-opt-1}
H^*(t,x,z)=\mathrm{ess}\inf_{u\in \U}H(t,x,z,u),\quad dP \times dt \mbox{-a.s.} 
\end{equation} 
\end{proposition}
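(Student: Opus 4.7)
The plan is to combine the classical essential infimum construction, applied for each fixed $z$, with a regularity argument in $z$ to produce a version of $H^*$ that is jointly measurable in $(t, x, z)$.

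I would first check, for each fixed $z \in \R^{I \times I}$, that the family $\{H(\cdot, z, u) : u \in \U\}$ of progressively measurable processes is directed downward: for $u, v \in \U$, the set $A := \{H(\cdot, z, u) \le H(\cdot, z, v)\}$ lies in $\mathbb{L}$, so $w := u \mathbf{1}_A + v \mathbf{1}_{A^c}$ belongs to $\U$ and satisfies $H(\cdot, z, w) = \min\{H(\cdot, z, u), H(\cdot, z, v)\}$. Applying the classical essential infimum theorem on $([0,T] \times \Omega, \mathbb{L}, dt \otimes dP)$, one obtains a sequence $(u_n^z) \subset \U$ with $H(\cdot, z, u_n^z)$ decreasing $dt \otimes dP$-a.e.\ to a progressively measurable representative $\tilde H^*(\cdot, z)$ of $\mathrm{ess}\inf_{u \in \U} H(\cdot, z, u)$.

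To obtain joint measurability in $z$, I would next fix a countable dense set $D \subset \R^{I \times I}$ and for each $q \in D$ a representative $\tilde H^*(\cdot, q)$ constructed as above, and set
\[
H^*(t, x, z) := \liminf_{\substack{q \to z \\ q \in D}} \tilde H^*(t, x, q),
\]
which is $\mathbb{L} \otimes \mathcal{B}(\R^{I \times I})$-measurable. Since the explicit formula \eqref{ham-u} shows that $z \mapsto H(t, x, z, u)$ is continuous for every $(t, x, u)$, the inequality $\tilde H^*(\cdot, q) \le H(\cdot, q, u)$ passes to the liminf along $q \to z$ in $D$, yielding $H^*(\cdot, z) \le H(\cdot, z, u)$ $dt \otimes dP$-a.e.\ for every $u \in \U$, and hence $H^*(\cdot, z) \le \mathrm{ess}\inf_u H(\cdot, z, u)$ a.e.

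The hard part will be the reverse inequality $\mathrm{ess}\inf_u H(\cdot, z, u) \le H^*(\cdot, z)$ for $z \notin D$, which requires some uniform-in-$u$ regularity of $H$ in $z$. From \eqref{ham-u} one has a Lipschitz estimate of the form
\[
|H(\cdot, q, u) - H(\cdot, z, u)| \le C_z |q - z| \sum_{j \neq i} (\lambda^u_{ij}(t) + g_{ij}) \mathbf{1}_{\{x(t^-) = i\}},
\]
valid on bounded $z$-regions, with the right-hand side $dt \otimes dP$-integrable uniformly in $u$ by (B2), \eqref{x-square-g} and \eqref{ell-norm}. A subsequence extraction along $q_n \to z$ in $D$, combined with this uniform estimate, transfers the essential infimum at $q_n$ to that at $z$, closing the argument.
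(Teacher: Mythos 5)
Your overall strategy matches the paper's: build measurable representatives of $\mathrm{ess}\inf_{u}H(\cdot,z,u)$ on a countable dense set of $z$'s (the paper uses $\mathbb{Q}^{I\times I}$), extend to all $z$ by passing to the limit along the dense set, get joint measurability for free from that definition, and identify the limit with the essential infimum at the remaining $z$'s via a uniform-in-$u$ modulus of continuity of $H$ in $z$ coming from the explicit form \eqref{ham-u} together with the moment bounds (the paper's version of your estimate is $|H^*(t,x,z_n)-H^*(t,x,z_m)|\le C(1+|x|_t+\sup_u\|P^u\|_2)\,\|e^{z_n}-e^{z_m}\|_g(t)$, which is your locally-Lipschitz bound). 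The test-function argument you sketch for the reverse inequality is exactly the paper's: take progressively measurable $\varphi\le H(\cdot,z,u)$ for all $u$, use continuity in $z$ to get $\varphi\le H(\cdot,q_n,u)+\ep$, hence $\varphi\le H^*(\cdot,q_n)+\ep$, and pass to the limit.

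One step is wrong as stated: the directed-downward claim. For $w=u\mathbf{1}_A+v\mathbf{1}_{A^c}$ you assert $H(\cdot,z,w)=\min\{H(\cdot,z,u),H(\cdot,z,v)\}$, but by \eqref{def-H} the Hamiltonian $H(t,x,z,u)=H(t,x,P^u\circ x_t^{-1},z,u_t)$ depends on $u$ not only through the instantaneous value $u_t$ but through the marginal law $P^u\circ x_t^{-1}$, which is a functional of the whole path of $u$ on $[0,t]$; splicing two controls changes $P^w$ globally, so on $A$ one has $H(t,x,z,w)\neq H(t,x,z,u)$ in general. This is precisely the nonlocal feature the authors single out in the introduction as the obstruction to the usual pointwise-minimization-plus-selection route. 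The error is not fatal, however: directedness is only needed if you insist that the essential infimum be the \emph{decreasing} limit of a sequence $H(\cdot,z,u_n)$, and you never use monotonicity afterwards. The general essential-infimum theorem already provides, for an arbitrary family of measurable functions, a countable subfamily $\mathcal{J}$ with $\mathrm{ess}\inf_{u\in\U}H(\cdot,z,u)=\inf_{u\in\mathcal{J}}H(\cdot,z,u)$ a.e., which is all you need and all the paper uses (its sets $\mathcal{J}_n$). Delete the directedness paragraph, invoke that version instead, and the rest of your argument goes through as in the paper.
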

The proof of \eqref{u-opt-1} is similar to the one of Proposition 4.4 in \cite{DH}. We give it in an appendix for the sake of completeness. \\

\medskip Define the $\F_T$-measurable random variable
\begin{equation}\label{h-*}
h^*(x):=\underset{u\in\U}{\mathrm{ess}\inf\,} h(x(T),P^u\circ x^{-1}(T)).
\end{equation}

\begin{proposition} \label{bsde-*} Assume the conditions (B1)-(B4) hold and that  $f$ and $h$ are uniformly bounded. Then, there exists a unique solution $(Y^*,Z^*)$ of the entropic BSDE
\begin{equation}\label{opt-bsde}
\left\{\begin{array}{lll}
-dY^*(t)=\{H^*(t,x,Z^*(t))+\langle \tau(Z^*(t)),1\rangle_{g}\}\}dt-Z^*(t)dM(t),\\ Y^*(T)=h^*(x),
\end{array}
\right.
\end{equation}
such that  
\begin{equation}\label{bsde-*-estimate}
E\left[\int_{(0,T]} \|e^{Z^*(s)}-1\|^2_gds\right]<\infty,
\end{equation}
and there exists a probability measure $\wh P$ on $(\Omega,\F)$ which is absolutely continuous w.r.t. $P$, under which $Y^*$ admits the representation
\begin{equation}\label{Y-*-rep}
e^{Y^*(t)}=\wh{E}\left[\exp{\{h^*(x)+\int_t^T \underset{u\in\U}{\mathrm{ess}\inf\,} f(s,x,u)ds\}}|\F_t\right].
\end{equation}
\end{proposition}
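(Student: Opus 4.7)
The plan is to mimic closely the proof of Proposition~\ref{u-bsde}, reducing the entropic BSDE \eqref{opt-bsde} to a linear one through an exponential change of variables, and then invoking the linear BSDE machinery of Proposition~\ref{bsde-th} after constructing a driving probability measure $\wh P$ by way of Corollary~\ref{L-l-hat}. First, I would set $y^*(t):=e^{Y^*(t)}$ and $\varphi^*(t):=y^*(t^-)(e^{Z^*(t)}-1)$ (entrywise on off-diagonal indices). It\^o's formula for pure-jump semimartingales, together with the algebraic identity $z+\tau(z)=e^{z}-1$, shows that \eqref{opt-bsde} is equivalent to the linearised BSDE
\begin{equation*}
\left\{\begin{array}{l}
-dy^*(t)=\underset{u\in\U}{\mathrm{ess}\inf}\bigl\{y^*(t^-)f^u(t)+\langle\ell^u(t),\varphi^*(t)\rangle_{g}\bigr\}\,dt-\varphi^*(t)\,dM(t),\\[1mm]
y^*(T)=e^{h^*(x)},
\end{array}\right.
\end{equation*}
where $f^u(t):=f(t,x,P^u\circ x^{-1}(t),u(t))$; the inverse transformation $Y^*=\ln y^*$, $Z^*(t)=\ln(1+\varphi^*(t)/y^*(t^-))$ is well defined because the a priori bounds from $|f|_\infty$ and $|h|_\infty$ confine any candidate $y^*$ to the interval $[e^{-|h|_\infty-|f|_\infty T},\,e^{|h|_\infty+|f|_\infty T}]$.

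Next, I would check that the linearised driver $\widetilde\Phi(t,y,\varphi):=\underset{u}{\mathrm{ess}\inf}\{yf^u(t)+\langle\ell^u(t),\varphi\rangle_{g}\}$ meets the hypotheses of Proposition~\ref{bsde-th}. It is Lipschitz in $y$ with constant $|f|_\infty$ (since $f$ is bounded and $y\mapsto yf^u$ is affine) and stochastic Lipschitz in $\varphi$ with coefficient $\underset{u}{\mathrm{ess}\sup}\|\ell^u(t)\|_g$, which belongs to $L^{2}([0,T]\times\Omega,dt\otimes dP)$ by \eqref{ell-norm}. To cast it in the balanced framework \eqref{balancing}, I would apply Corollary~\ref{L-l-hat} to the sign-based selector
\[
\wh\ell_{ij}(t):=\underset{u\in\U}{\mathrm{ess}\inf}\,\ell^u_{ij}(t)\,\mathbf{1}_{\{\varphi^*_{ij}(t)>0\}}+\underset{u\in\U}{\mathrm{ess}\sup}\,\ell^u_{ij}(t)\,\mathbf{1}_{\{\varphi^*_{ij}(t)\le 0\}},
\]
which satisfies the growth bound \eqref{l-hat-est} and produces a probability measure $\wh P\ll P$ under which the compensated processes $\wh M_{ij}(t):=M_{ij}(t)-\int_{(0,t]}\wh\ell_{ij}(s)I_i(s^-)g_{ij}\,ds$ form a family of zero-mean, square-integrable, mutually orthogonal martingales.

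Applying Proposition~\ref{bsde-th} (adapted exactly as in the proof of Proposition~\ref{u-bsde} via an approximation by standard Markov chain BSDEs in the spirit of \cite{Ham-Lepl95}) then delivers a unique solution $(y^*,\varphi^*)$ with $y^*$ bounded and $E\bigl[\int_{(0,T]}\|\varphi^*(s)\|_g^2\,ds\bigr]<\infty$. Rewriting the $dM$-integral as a $d\wh M$-integral plus the corresponding drift and taking conditional expectations under $\wh P$ yields the representation
\[
y^*(t)=\wh E\Bigl[\exp\Bigl(h^*(x)+\int_t^T\underset{u\in\U}{\mathrm{ess}\inf}\,f(s,x,u)\,ds\Bigr)\,\Big|\,\F_t\Bigr],
\]
which is exactly \eqref{Y-*-rep}. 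Inverting the exponential substitution recovers $(Y^*,Z^*)$ as the unique solution of \eqref{opt-bsde}; the estimate \eqref{bsde-*-estimate} follows from the identity $\|e^{Z^*(t)}-1\|_g=\|\varphi^*(t)/y^*(t^-)\|_g$ together with the strictly positive lower bound on $y^*$.

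The main obstacle I foresee is the last step: extracting the clean $\underset{u}{\mathrm{ess}\inf}\,f^u$ (rather than an $f^{\hat u}$ for some selected $\hat u$) inside the conditional expectation \eqref{Y-*-rep}. The selector $\wh\ell$ from Corollary~\ref{L-l-hat} acts componentwise on $\ell^u_{ij}$ according to the sign of $\varphi^*_{ij}$, and for the $f$-term to appear as a pure essential infimum one needs these componentwise selections to be compatible with a single progressively measurable $\hat u\in\U$ minimising $f^u$; equivalently, one needs a jointly measurable selector realising the essential infimum of $\widetilde\Phi$ at $(\varphi^*,y^*)$. This is the nonlocal-selection issue flagged in the introduction, and I would handle it along the lines of the proof of Proposition~\ref{ess-inf} in the appendix, exhausting $\U$ by a countable dense subfamily with respect to Ekeland's distance $\delta_E$ introduced in \eqref{ekeland} and passing to the ess inf through a monotone limit.
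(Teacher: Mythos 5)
Your overall route is the paper's: exponentiate, pass to the linear driver $F(t,x,y,z)=\operatorname*{ess\,inf}_{u}\{yf^u(t)+\langle \ell^u(t),z\rangle_g\}$, verify that $F$ is balanced so that Corollary \eqref{L-l-hat} and Proposition \eqref{bsde-th} apply, read off the representation under $\wh P$, and invert the substitution. The one genuine gap is in how you verify the balanced condition. Your sign-based selector
$\wh\ell_{ij}(t)=\operatorname*{ess\,inf}_{u}\ell^u_{ij}(t)\mathbf{1}_{\{\varphi^*_{ij}(t)>0\}}+\operatorname*{ess\,sup}_{u}\ell^u_{ij}(t)\mathbf{1}_{\{\varphi^*_{ij}(t)\le 0\}}$
is exactly the paper's lower selector $\underline{\ell}(t,x,z,0)$, and it only delivers the \emph{inequality} $F(t,x,y,z)-F(t,x,y,0)\ge \langle \underline{\ell}(t,x,z,0),z\rangle_g$ (because an infimum of sums dominates the sum of infima); the symmetric upper selector gives the reverse inequality. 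Condition \eqref{balancing} demands exact equality, and the paper obtains it by interpolating: $\wh\ell=\alpha\,\overline{\ell}+(1-\alpha)\underline{\ell}$ with the explicit coefficient $\alpha\in[0,1]$ of \eqref{alpha-F}, chosen precisely so that $F(t,x,y,z)-F(t,x,y,\bar z)=\langle\wh\ell(t,x,z,\bar z),z-\bar z\rangle_g$ holds identically. Without this interpolation your change of measure does not absorb the whole $z$-dependent part of the drift, so after passing to $\wh P$ a nonzero residual drift survives and the representation \eqref{Y-*-rep} fails (you would only get a one-sided comparison).

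This also dissolves the obstacle you flag at the end. There is no need for a measurable selector $\hat u$ realising $\operatorname*{ess\,inf}_u f^u$: once the exact balanced identity holds, evaluating at $\bar z=0$ gives $F(t,x,y,z)=F(t,x,y,0)+\langle\wh\ell(t,x,z,0),z\rangle_g$ with $F(t,x,y,0)=y\,\operatorname*{ess\,inf}_{u}f(t,x,u)$, i.e.\ exactly the linear form \eqref{driver-F} with $\kappa=\operatorname*{ess\,inf}_u f$. The clean essential infimum of $f$ in \eqref{Y-*-rep} then comes for free from the representation \eqref{Y-rep}, with no countable exhaustion of $\U$ or selection argument required. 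So the fix is local: replace your one-sided $\wh\ell$ by the paper's convex combination \eqref{ell-F}, check (as the paper does) that the associated intensity $\wh\lambda_{ij}=(1+\wh\ell_{ij})g_{ij}$ still satisfies the growth bound of Corollary \eqref{L-l-hat}, and the rest of your argument goes through unchanged.
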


\begin{proof}  Let $|f|_{\infty}$ and $|h|_{\infty}$ denote the uniform bounds of $f$ and $h$. Again, performing the change of variable $y(t):=e^{Y^*(t)}, \,\, \psi(t):=e^{Z^*(t)}-1$ and applying It\^{o}'s lemma, the process $(y,\psi)$ satisfies the BSDE
$$
y(t)=e^{h^*(x)}+\int_{(t,T]} y(s^-)H^*(s,x, \ln(\psi(s)+1))ds-\int_{(t,T]}y(s^-)\psi(s)dM(s).
$$
Since $y(t)$ is non-negative, we have
\begin{equation}\label{H*-F}
y(t^-)H^*(t,x, \ln(\psi(t)+1))=\mathrm{ess}\inf_{u\in \U}\left\{y(t^-)f(t,x,u)+\langle \ell^u(t),y(t^-)\psi(t)\rangle_g\right\}.
\end{equation}
Therefore, letting further $\varphi(t):=y(t^-)\psi(t)$, the process $(y,\varphi)$ satisfies the following linear BSDE

\begin{equation}\label{opt-bsde-linear}
\left\{\begin{array}{lll}
-dy(t)=F(t,x, y(t^-),\varphi(t))dt-\varphi(t)dM(t),\\ \quad y(T)=e^{h^*(x)},
\end{array}
\right.
\end{equation}
where
\begin{equation}\label{linear-driver}
F(t,x,y,z):=\mathrm{ess}\inf_{u\in \U}\left\{yf(t,x,u)+\langle \ell^u(t),z\rangle_g\right\}.
\end{equation}
We will now check that $F$ satisfies \eqref{driver-linear}. More precisely, we will show that there exists a predictable process $\wh\ell(t,x,z,\bar{z})$ satisfying (A2) such that
\begin{equation}\label{driver-F}
F(t,x,y,z)=\underset{u\in \U}{\mathrm{ess}\inf\,}f(t,x,u)y+\langle \wh{\ell}(t,x,z,0),z\rangle_g.
\end{equation}
We claim that, for every $z, \bar z\in \R^{I\times I}$, it holds that (cf. \cite{Cohen2015} pp. 482-483)
\begin{equation}\label{driver-F-1}
F(t,x,y,z)-F(t,x,y,\bar z)=\langle \wh{\ell}(t,x,z,\bar{z}),z-\bar{z}\rangle_g,
\end{equation}
where $\wh\ell(t,x,z,\bar{z})=(\wh{\ell}_{ij}(t,x,z,\bar{z}))_{ij}$ is given by  
\begin{equation}\label{ell-F}
\wh\ell(t,x,z,\bar{z})=\alpha(t,x,z,\bar{z}) \bar{\ell}(t,x,z,\bar{z})+(1-\alpha(t,x,z,\bar{z}))\underline{\ell}(t,x,z,\bar{z}),
\end{equation}
with 
\begin{equation}\label{alpha-F}
\alpha(t,x,z,\bar{z})=\frac{F(t,x,y,z)-F(t,x,y,\bar{z})-\langle \underline{\ell}(t,x,z,\bar{z}),z-\bar{z}\rangle_g}{ \langle \bar{\ell}(t,x,z,\bar{z})-\underline{\ell}(t,x,z,\bar{z}),z-\bar{z}\rangle_g} \in [0,1],
\end{equation}
where, $\underline{\ell}=(\underline{\ell}_{ij})_{ij},\,\, \overline{\ell}=(\overline{\ell}_{ij})_{ij})$ satisfy
\begin{equation}\label{ell-F-1}
\langle \underline{\ell}(t,x,z,\bar{z}),z-\bar{z}\rangle_g\le F(t,x,y,z)-F(t,x,y,\bar{z})\le \langle \overline{\ell}(t,x,z,\bar{z}),z-\bar{z}\rangle_g,
\end{equation}
with, for $i\neq j$,
$$
\begin{array}{lll}
\underline{\ell}_{ij}(t,x,z,\bar{z}):=\underset{u\in \U}{\mathrm{ess}\inf\,}\ell_{ij}^u(t)\mathbf{1}_{\{z_{ij}>\bar{z}_{ij}\}}+\underset{u\in \U}{\mathrm{ess}\sup\,}\ell_{ij}^u(t)\mathbf{1}_{\{z_{ij}\le \bar{z}_{ij}\}},\\ \overline{\ell}_{ij}(t,x,z,\bar{z}):=\underset{u\in \U}{\mathrm{ess}\inf\,}\ell_{ij}^u(t)\mathbf{1}_{\{\bar{z}_{ij}>z_{ij}\}}+\underset{u\in \U}{\mathrm{ess}\sup\,}\ell_{ij}^u(t)\mathbf{1}_{\{\bar{z}_{ij}\le z_{ij}\}}.
\end{array}
$$
Indeed, we have
$$
\begin{array}{lll}
F(t,x,y,z)-F(t,x,y,\bar{z})\ge \underset{u\in \U}{\mathrm{ess}\inf\,} \langle \ell^u(t),z-\bar{z}\rangle_g\\ \qquad\qquad \ge \langle \underset{u\in \U}{\mathrm{ess}\inf\,}\ell^u(t),(z-\bar{z})^+\rangle_g -
\langle \underset{u\in \U}{\mathrm{ess}\sup\,}\ell^u(t),(z-\bar{z})^-\rangle_g \\
\qquad\qquad = \langle \underline{\ell}(t,x,z,\bar{z}),z-\bar{z}\rangle_g,
\end{array}
$$
where, $(z-\bar{z})^{\pm}:=\left((z_{ij}-\bar{z}_{ij})^{\pm}\right)_{ij}$, with $\rho^+=\max(\rho,0)$ and $\rho^-=\max(-\rho,0),\,\rho\in \R$.  By symmetry, we also have
$$
F(t,x,y,z)-F(t,x,y,\bar{z})\le \langle \overline{\ell}(t,x,z,\bar{z}),z-\bar{z}\rangle_g.
$$
Combining these two inequalities and choosing $\wh\ell$ as in \eqref{ell-F}, we obtain \eqref{driver-F-1}. Moreover, since for every $u\in\U,\,\,i\neq j$, $\ell^u_{ij}=\frac{\l^u_{ij}}{g_ij}-1$,  the intensity processes defined by 
$$
\begin{array}{lll}
\underline{\lambda}_{ij}(t,x,z,\bar{z}):=\underset{u\in \U}{\mathrm{ess}\inf\,}\l_{ij}^u(t)\mathbf{1}_{\{z_{ij}>\bar{z}_{ij}\}}+\underset{u\in \U}{\mathrm{ess}\sup\,}\l_{ij}^u(t)\mathbf{1}_{\{z_{ij}\le \bar{z}_{ij}\}},\\
\overline{\lambda}_{ij}(t,x,z,\bar{z}):=\underset{u\in \U}{\mathrm{ess}\inf\,}\l_{ij}^u(t)\mathbf{1}_{\{\bar{z}_{ij}>z_{ij}\}}+\underset{u\in \U}{\mathrm{ess}\sup\,}\l_{ij}^u(t)\mathbf{1}_{\{\bar z_{ij}\le z_{ij}\}},
\end{array}
$$ 
are related to $\underline{\ell}_{ij}$ and $\overline{\ell}_{ij}$ by the formula
\begin{equation}\label{lambda-ell-bar}
\underline{\lambda}_{ij}(t,x,z,\bar{z})=(1+\underline{\ell}_{ij}(t,x,z,\bar{z}))g_{ij},\quad \overline{\lambda}_{ij}(t,x,z,\bar{z}):=(1+\overline{\ell}_{ij}(t,x,z,\bar{z}))g_{ij}.
\end{equation}
Thus, $\underline{\ell}_{ij}(t,x,z,\bar{z})$ and $\overline{\ell}_{ij}(t,x,z,\bar{z})$  are both strictly larger than $-1$.  From \eqref{lambda-ell-bar},  it follows that the intensity process $\wh\l(t,x,z,\bar{z})$ associated to $\wh\ell(t,x,z,\bar{z})$ through the formula  $\wh\l_{ij}(t,x,z,\bar{z}):=(1+\wh\ell_{ij}(t,x,z,\bar{z}))g_{ij},\,\,i\neq j,$ reads
$$
\wh\l_{ij}(t,x,z,\bar{z})=\alpha(t,x,z,\bar{z}) \bar{\l}_{ij}(t,x,z,\bar{z})+(1-\alpha(t,x,z,\bar{z}))\underline{\l}_{ij}(t,x,z,\bar{z}),
$$
and satisfies \eqref{l-hat-est} of Corollary \eqref{L-l-hat}. Now, since $F(t,x,y,0)=\underset{u\in \U}{\mathrm{ess}\inf\,}f(t,x,u)y$, we obtain \eqref{driver-F}. Therefore, by  Proposition \eqref{bsde-th}, the BSDE \eqref{opt-bsde-linear} admits a unique solution $(y,\varphi)$ which satisfies 
$$
E\left[\int_{(0,T]} \|\varphi(s)\|^2_g ds\right]<\infty,
$$
and 
\begin{equation}\label{y-*-rep}
y(t)=\wh{E}\left[\exp{\{h^*(x)+\int_t^T \underset{u\in\U}{\mathrm{ess}\inf\,} f(s,x,u)ds\}}|\F_t\right],
\end{equation}
where the expectation is taken w.r.t. $\wh{P}$ associated with $\wh\ell(t,x,\varphi,0)$, from which we obviously obtain the following estimate:
\begin{equation}\label{bounds-bsde}
\ln{|y|_T} \le |h|_{\infty}+|f|_{\infty}T,\quad P\text{-}\as
\end{equation} 
Since the transformation 
$(y,\varphi)\mapsto (Y^*,Z^*)$ defined by  
$$
y(t)=e^{Y^*(t)},\quad  Z^*(t):=\ln(y^{-1}(t^-)\varphi(t)+1),\quad 0\le t\le T,
$$ 
is one-to-one, a unique solution $(Y^*,Z^*)$ to the BSDE \eqref{opt-BSDE} thus exists. Moreover, using the representation \eqref{y-*-rep}, we obtain \eqref{Y-*-rep}.

\end{proof}

\ms We have the following comparison result.
\begin{proposition}[Comparison]\label{comp-bsde-u}
For every $t\in[0,T]$, it holds that
\begin{equation}
Y^*(t)\le Y^u(t),\quad P\mbox{-a.s.},\quad u\in\U.
\end{equation}
\end{proposition}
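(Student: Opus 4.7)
The plan is to apply the comparison theorem for Markov chain BSDEs (Proposition~\ref{comparison}) to the exponentially transformed BSDEs rather than to the original entropic ones. The reason is that the entropic drivers $H + \langle \tau(\cdot), 1\rangle_g$ are not globally balanced in the sense of (A2), owing to the $e^z$ factor inside $\tau$, so a direct application of Proposition~\ref{comparison} to $Y^u$ and $Y^*$ is not available; after the exponential substitution, however, the drivers become linear in $z$ and the balancing has essentially already been verified inside the proof of Proposition~\ref{bsde-*}.

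Concretely, set $y^u(t) := e^{Y^u(t)}$, $\varphi^u(t) := y^u(t^-)(e^{Z^u(t)} - 1)$, and analogously $y^*(t) := e^{Y^*(t)}$, $\varphi^*(t) := y^*(t^-)(e^{Z^*(t)} - 1)$. A direct It\^o computation, relying on the pointwise cancellation $\langle e^z - 1, 1\rangle_g - \langle \tau(z), 1\rangle_g - \langle z, 1\rangle_g \equiv 0$, shows that $(y^u, \varphi^u)$ solves the linear BSDE
\begin{equation*}
-dy^u(t) = \bigl[y^u(t^-)\, f^u(t) + \langle \ell^u(t), \varphi^u(t)\rangle_g\bigr]\,dt - \varphi^u(t)\,dM(t), \quad y^u(T) = e^{h(x_T, P^u \circ x_T^{-1})},
\end{equation*}
with $f^u(t) := f(t, x, P^u \circ x_t^{-1}, u_t)$, while the same substitution applied to $Y^*$ gives, exactly as in the proof of Proposition~\ref{bsde-*},
\begin{equation*}
-dy^*(t) = F\bigl(t, y^*(t^-), \varphi^*(t)\bigr)\,dt - \varphi^*(t)\,dM(t), \quad y^*(T) = e^{h^*(x)},
\end{equation*}
where $F(t, y, z) := \underset{v \in \U}{\mathrm{ess}\inf}\,\bigl\{y\, f^v(t) + \langle \ell^v(t), z\rangle_g\bigr\}$, and $F$ is shown in that same proof to satisfy the balancing condition (A2) via a predictable process $\hat\ell$ whose $\|\cdot\|_g$-norm lies in $L^2(dP \otimes dt)$ thanks to Lemma~\ref{lambda-norm}.

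It then remains to check the three hypotheses of Proposition~\ref{comparison}. First, $y^*(T) \le y^u(T)$ follows by exponentiating the definitional inequality $h^*(x) \le h(x_T, P^u \circ x_T^{-1})$. Second, for every $(y, z) \in \R \times \R^{I \times I}$,
\begin{equation*}
F(t, y, z) \le y\, f^u(t) + \langle \ell^u(t), z\rangle_g,
\end{equation*}
because the essential infimum over $v \in \U$ never exceeds the value at the specific choice $v = u$; note that this holds irrespective of the sign of $y$. Third, $F$ is balanced, as just recalled. Proposition~\ref{comparison} then yields $y^*(t) \le y^u(t)$ $P$-a.s.\@ for every $t \in [0, T]$, and taking logarithms gives the stated inequality $Y^*(t) \le Y^u(t)$. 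The only point that requires care is the bookkeeping for the It\^o transformation of the $\mathrm{ess}\inf$-type driver $H^*$, but this has already been carried out inside the proof of Proposition~\ref{bsde-*} and merely has to be invoked here.
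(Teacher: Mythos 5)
Your proposal is correct and follows essentially the same route as the paper: both pass to the exponentiated processes $e^{Y^*}$ and $e^{Y^u}$, observe that the resulting drivers $F(t,x,y,z)=\mathrm{ess}\inf_{v\in\U}\{yf(t,x,v)+\langle\ell^v(t),z\rangle_g\}$ and $b(t,x,y,z,u)=yf(t,x,u)+\langle\ell^u(t),z\rangle_g$ satisfy $F\le b$ together with $e^{h^*}\le e^{h}$ at the terminal time, and invoke the balancing condition established in the proof of Proposition~\ref{bsde-*} to apply Proposition~\ref{comparison}. The paper's proof is just a more compressed version of the same argument.
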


\begin{proof}  The result follows from Proposition \eqref{comparison} since $h^*(x)\le h(x_T,P^u\circ x_T^{-1})$
and the driver 
$F(t,x,y,z)=\mathrm{ess}\inf_{u\in \U}\left\{yf(t,x,u)+\langle \ell^u(t),z\rangle_g\right\}$ of the BSDE solved by $e^{Y^*}$  and the driver $b(t,x,y,z,u):=yf(t,x,u)+\langle \ell^u(t),z\rangle_g$ of the BSDE solved by $e^{Y^u}$ obviously satisfy  $F(t,x,y,z)\le b(t,x,y,z,u)$. Moreover, both drivers satisfy \eqref{balancing}.
\end{proof}

\begin{proposition}[$\ep$-optimality]\label{e-optimality} Assume that for any $\ep>0$ there exists $u^{\ep}\in \U$ such that $P$-a.s.,
\begin{equation}\label{e-*-H-g}
\left\{ \begin{array}{ll} H^*(t,x,Z^*(t))\ge H(t,x,Z^*(t),u^{\ep})-\ep, \quad 0\le t<T, \\ h^*(x)\ge h(x(T),P^{u^{\ep}}\circ x^{-1}(T))-\ep.
\end{array}
\right.
\end{equation}
Then, 
\begin{equation}\label{*-e-opt}
Y^*(t)=\underset{u\in\U}{\mathrm{ess}\inf\,} Y^u(t),\quad 0\le t\le T.
\end{equation}
\end{proposition}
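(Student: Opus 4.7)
The proof has two sides. The inequality $Y^*(t)\le \underset{u\in\U}{\mathrm{ess}\inf}\,Y^u(t)$ is free from Proposition \ref{comp-bsde-u}, since $Y^*(t)\le Y^u(t)$ for every admissible $u$. For the reverse, I would fix $\epsilon>0$ and the $\epsilon$-optimal control $u^\epsilon\in\U$ supplied by \eqref{e-*-H-g}, then prove the quantitative comparison
\[
Y^{u^\epsilon}(t) \le Y^*(t) + C\,\epsilon,\quad t\in[0,T],\ P\text{-a.s.},
\]
with $C$ depending only on $T$. Letting $\epsilon\downarrow 0$ yields $\underset{u\in\U}{\mathrm{ess}\inf}\,Y^u(t)\le Y^{u^\epsilon}(t)\le Y^*(t)$, which combined with the preceding direction gives equality in \eqref{*-e-opt}.

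To produce the quantitative comparison, subtract \eqref{opt-bsde} from \eqref{u-yz-bsde} and set $D^\epsilon:=Y^{u^\epsilon}-Y^*$, $\Delta Z:=Z^{u^\epsilon}-Z^*$. Using the explicit form $H(t,x,u,z)=f(t,x,P^u\circ x_t^{-1},u)+\langle \ell^u(t),e^z-1\rangle_g$ together with $\tau(z)=e^z-z-1$, the central algebraic step is to add and subtract $H(t,x,u^\epsilon,Z^*(t))+\langle\tau(Z^*(t)),1\rangle_g$ in the driver of $D^\epsilon$; writing $e^{Z^{u^\epsilon}_{ij}}-e^{Z^*_{ij}}=e^{Z^*_{ij}}\,\Delta Z_{ij}\,\beta^\epsilon_{ij}$ with $\beta^\epsilon_{ij}:=(e^{\Delta Z_{ij}}-1)/\Delta Z_{ij}$ (set to $1$ when $\Delta Z_{ij}=0$), the driver decomposes as
\[
\langle \gamma^\epsilon(t),\Delta Z(t)\rangle_g + r^\epsilon(t),\qquad \gamma^\epsilon_{ij}(t):=e^{Z^*_{ij}(t)}\,\beta^\epsilon_{ij}(t)\,(1+\ell^{u^\epsilon}_{ij}(t)) - 1,
\]
and the residual $r^\epsilon(t):=H(t,x,u^\epsilon,Z^*(t))-H^*(t,x,Z^*(t))$ is bounded above by $\epsilon$ thanks to the first line of \eqref{e-*-H-g}. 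The terminal datum $D^\epsilon(T)=h(x(T),P^{u^\epsilon}\circ x^{-1}(T))-h^*(x)\le \epsilon$ by the second line of \eqref{e-*-H-g}.

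Each of the three factors making up $1+\gamma^\epsilon_{ij}$ is strictly positive, so $\wt\lambda_{ij}:=g_{ij}(1+\gamma^\epsilon_{ij})$ is a legitimate positive intensity. Applying Corollary \ref{L-l-hat} to $\gamma^\epsilon$ produces a probability $\wt P$ on $(\Om,\F)$ under which the processes $\wt M_{ij}(t):=M_{ij}(t)-\int_{(0,t]}\gamma^\epsilon_{ij}(s)I_i(s^-)g_{ij}\,ds$ are mutually orthogonal square-integrable martingales. Rewriting the BSDE for $D^\epsilon$ in terms of $\wt M$ absorbs the $\langle\gamma^\epsilon,\Delta Z\rangle_g$ contribution into the martingale part, so taking conditional expectations under $\wt P$ yields the linear-BSDE representation
\[
D^\epsilon(t) = \wt E\Bigl[D^\epsilon(T) + \int_{(t,T]} r^\epsilon(s)\,ds\,\Big|\,\F_t\Bigr] \le \epsilon(1+T-t),
\]
from which $Y^{u^\epsilon}\le Y^*+\epsilon(1+T)$ follows, as required.

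The main obstacle is verifying that $\wt\lambda$ satisfies the polynomial growth hypothesis \eqref{l-hat-est} of Corollary \ref{L-l-hat}: the factors $e^{Z^*_{ij}}$ and $\beta^\epsilon_{ij}$ are not a priori bounded, and only the $L^2_g$-estimates \eqref{u-bsde-estim}, \eqref{bsde-*-estimate} are available for $e^{Z^*}-1$ and $e^{Z^{u^\epsilon}}-1$. I would handle this by localizing along stopping times $\sigma_n\uparrow T$ on which $e^{Z^*}$ and $\beta^\epsilon$ are bounded, running the argument above up to $\sigma_n$, and then using the uniform bounds on $|Y^*|$ and $|Y^{u^\epsilon}|$ established in \eqref{bounds-u-bsde} and \eqref{bounds-bsde} to pass to the limit by dominated convergence. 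A secondary routine point, namely that $\int\Delta Z\,d\wt M$ is a genuine $\wt P$-martingale, follows from the same $L^2$-estimates together with Doob's inequality \eqref{Doob}.
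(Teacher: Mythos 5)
Your overall strategy --- the easy direction from Proposition \ref{comp-bsde-u}, plus a quantitative bound $Y^{u^\ep}\le Y^*+C\ep$ for the $\ep$-optimal control --- is the right shape, and your linearization of the difference of the two \emph{entropic} drivers is algebraically correct: one does get $-dD^\ep=\{\langle\gamma^\ep,\Delta Z\rangle_g+r^\ep\}dt-\Delta Z\,dM$ with $1+\gamma^\ep_{ij}=e^{Z^*_{ij}}\beta^\ep_{ij}\,\l^{u^\ep}_{ij}/g_{ij}>0$, $r^\ep\le\ep$ and $D^\ep(T)\le\ep$. The problem is exactly the one you flag and then do not resolve: the representation $D^\ep(t)=\wt E[D^\ep(T)+\int_t^T r^\ep ds\,|\,\F_t]$ requires the Dol\'eans-Dade exponential built from $\gamma^\ep$ to be a \emph{true} martingale, and Corollary \ref{L-l-hat} only delivers this under the linear-growth bound \eqref{l-hat-est}, which $\wt\l_{ij}=e^{Z^*_{ij}}\beta^\ep_{ij}\l^{u^\ep}_{ij}$ does not satisfy on the basis of the $L^2_g$ estimates \eqref{u-bsde-estim} and \eqref{bsde-*-estimate} alone. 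Your proposed repair by localization does not close the gap: at a stopping time $\sigma_n<T$ the terminal datum $D^\ep(\sigma_n)$ is only bounded by $2(|h|_\infty+T|f|_\infty)$, not by $\ep$, so the localized inequality reads $D^\ep(t\wedge\sigma_n)\le\ep(1+T)+K\,\wt P_n(\sigma_n<T\,|\,\F_{t\wedge\sigma_n})$, and sending $n\to\infty$ requires $\wt P_n(\sigma_n<T)\to0$ under a \emph{sequence of changing measures} $\wt P_n$. That needs uniform integrability (or a uniform $L^p$/BMO-type bound) of the densities $\wt L^n_T$, which is precisely the hard point in quadratic/entropic BSDE comparison and is not supplied by Doob's inequality or dominated convergence. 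As written, the argument is therefore incomplete at its load-bearing step.

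The paper sidesteps this entirely by \emph{exponentiating first}: it compares $y^*=e^{Y^*}$ and $y^{u^\ep}=e^{Y^{u^\ep}}$, which solve \emph{linear} BSDEs with drivers $F^*(t,x,y,z)=\mathrm{ess}\inf_u\{f(t,x,u)y+\langle\ell^u(t),z\rangle_g\}$ and $b(t,x,y,z,u^\ep)=f(t,x,u^\ep)y+\langle\ell^{u^\ep}(t),z\rangle_g$. The hypotheses \eqref{e-*-H-g} become $F^*\ge b-\ep y^*$ and $\zeta^*\ge e^{-\ep}\zeta^\ep$; after integrating by parts against $e^{\int_t^s f(r,x,u^\ep)dr}$, the $\langle\ell^{u^\ep},\varphi^*-\varphi^{u^\ep}\rangle_g$ term is absorbed by taking conditional expectation under $P^{u^\ep}$ --- a measure that is \emph{already known to exist} by Proposition \ref{L-u-mart}, with no new Girsanov argument and no unbounded coefficient $e^{Z^*}\beta^\ep$ in sight. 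The bound $y^*\le e^{|h|_\infty+T|f|_\infty}$ then yields $y^*\ge y^{u^\ep}-\ep\,e^{|h|_\infty+T|f|_\infty}(1+|h|_\infty+T|f|_\infty)$. If you want to salvage your route, you would need to first prove that $Z^*$ and $Z^{u^\ep}$ are bounded (or that the relevant stochastic exponentials are uniformly integrable); the cleaner fix is simply to perform the exponential change of variables before comparing, as the paper does.
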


\begin{proof} It suffices the show that 
$$
e^{Y^*(t)}=\underset{u\in\U}{\mathrm{ess}\inf\,} e^{Y^u(t)},\quad 0\le t\le T.
$$
Set $y^*(t)=e^{Y^*(t)}$ and $y^{u^{\ep}}=e^{Y^{u^{\ep}}}$. As above, $(y^*,\varphi^*)$  solves a BSDE with final value $\zeta^*:=e^{h^*(x)}$ and driver 
$$
F^*(t,x,y,z):=\underset{u\in\U}{\mathrm{ess}\inf\,}\{f(t,x,u)y+\langle \ell^u(t),z\rangle_g\},
$$ 
while $(y^{u^{\ep}},\varphi^{u^{\ep}})$ solves a linear BSDE with final value  $\z^{\ep}:=e^{h(x(T),P^{u^{\ep}}\circ x^{-1}(T))}$ and driver 
$$
b(t,x,y,z,u^{\ep}):=f(t,x,u^{\ep})y+\langle \ell^{u^{\ep}}(t),z\rangle_g.
$$ 
Moreover, by \eqref{H*-F}, the inequalities  \eqref{e-*-H-g} translate into 
\begin{equation}\label{e-*-exp}
\left\{ \begin{array}{ll}F^*(t,x,y^*(t),\varphi^*(t))\ge b(t,x,y^*(t),\varphi^*(t),u^{\ep})-\ep y^*(t), \quad 0\le t<T,\\ 
\zeta^*\ge e^{-\ep}\zeta^{\ep}.
\end{array}
\right.
\end{equation}
By adding and subtracting $ F(t,x,y^*(t),\varphi^*(t),u^{\ep})$ and $ F(t,x,y^*(t),\varphi^{u^{\ep}}(t),u^{\ep})$ respectively, and integrating by parts, using the linear structure of $F$, we obtain
$$
\begin{array}{lll}
y^*(t)-y^{u^{\ep}}(t)\ge  \int_{(t,T]}e^{\int_t^s f(r,x,u^{\ep})dr}\langle \ell^{u^{\ep}}(s), \varphi^*(s)-\varphi^{u^{\ep}}(s))\rangle_gds \\ \qquad\qquad\qquad -\int_{(t,T]}e^{\int_t^s f(r,x,u^{\ep})dr}(\varphi^*(s)-\varphi^{u^{\ep}}(s))dM(s)\\ \qquad\qquad\qquad  +\zeta^{\ep}(e^{-\ep}-1)e^{\int_t^T f(r,x,u^{\ep})dr}-\ep\int_{(t,T]}e^{\int_t^s f(r,x,u^{\ep})dr}y^*(s)ds. 
\end{array}
$$
Taking conditional expectation w.r.t. $P^{u^{\ep}}$ and arranging terms, noting that $e^{-\ep}-1\ge -\ep$, we obtain
$$
y^*(t)\ge y^{u^{\ep}}(t)-\ep E^{u^{\ep}}\left[e^{|h|_{\infty}+\int_t^T f(r,x,u^{\ep})dr}+ \int_{(t,T]}e^{\int_t^s f(r,x,u^{\ep})dr}y^*(s)ds|\F_t\right].
$$
Finally, since by \eqref{y-*-rep}, $y^*(t)\le e^{|h|_{\infty}+T|f|_{\infty}}$, we obtain 
$$
y^*(t)\ge y^{u^{\ep}}(t)-\ep e^{|h|_{\infty}+T|f|_{\infty}}(1+|h|_{\infty}+T|f|_{\infty}).
$$
This in turn implies that,  for every $0\le t\le T$,  $y^*(t)\ge \underset{u\in\U}{\mathrm{ess}\inf\,}y^u(t)\,\,\, P\text{-}\as$ 
\end{proof}

In next theorem, we characterize the set of optimal controls associated with  (\ref{opt-J}) under the dynamics $P^u$.
\begin{theorem}[Existence of optimal control]\label{opt-BSDE} If there exists $u^*\in\U$ such that 
\begin{equation}\label{*-H-opt}
H^*(t,x,Z^*(t))=H(t,x,P^{u^*}\circ x^{-1}(t),Z^*(t),u^*(t)),\quad 0\le t< T,
\end{equation}
and
\begin{equation}\label{*-g-opt}
h^*(x)=h(x(T),P^{u^*}\circ x^{-1}(T)).
\end{equation}
Then, \begin{equation}\label{Y-opt}
Y^*(t)=Y^{u^*}(t)=\underset{u\in\U}{\mathrm{ess}\inf\,} Y^u(t),\quad 0\le t\le T.
\end{equation}
In particular, $Y_0^*=\inf_{u\in\U} J(u)=J(u^*)$.
\end{theorem}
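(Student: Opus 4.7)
The plan is to reduce the statement to an application of the uniqueness of the entropic BSDE from Proposition \ref{u-bsde} combined with the comparison result in Proposition \ref{comp-bsde-u}. The key observation is that under hypotheses \eqref{*-H-opt} and \eqref{*-g-opt}, the BSDE \eqref{opt-bsde} characterizing $Y^{*}$ becomes identical, driver and terminal value alike, to the BSDE \eqref{u-yz-bsde} characterizing $Y^{u^{*}}$.

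First I would fix the candidate $u^{*}\in\U$ and substitute \eqref{*-H-opt} into the BSDE \eqref{opt-bsde} solved by $(Y^{*},Z^{*})$. Using also \eqref{*-g-opt} for the terminal value, one obtains
\begin{equation*}
-dY^{*}(t)=\bigl\{H(t,x,P^{u^{*}}\!\circ x^{-1}(t),Z^{*}(t),u^{*}(t))+\langle\tau(Z^{*}(t)),1\rangle_{g}\bigr\}dt-Z^{*}(t)dM(t),
\end{equation*}
with $Y^{*}(T)=h(x(T),P^{u^{*}}\!\circ x^{-1}(T))$. Hence $(Y^{*},Z^{*})$ solves exactly the entropic BSDE \eqref{u-yz-bsde} associated with the admissible control $u^{*}$. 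By the uniqueness statement of Proposition \ref{u-bsde}, applied to $u=u^{*}$, we deduce $Y^{*}(t)=Y^{u^{*}}(t)$ $P$-a.s.\ for every $t\in[0,T]$, and $Z^{*}=Z^{u^{*}}$ in the appropriate $dP\otimes g_{ij}I_{i}(s^{-})ds$-a.e.\ sense.

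Next I would invoke Proposition \ref{comp-bsde-u}, which yields the inequality $Y^{*}(t)\le Y^{u}(t)$ $P$-a.s.\ for every $u\in\U$ and every $t\in[0,T]$. Combined with the equality $Y^{*}=Y^{u^{*}}$ obtained above, this immediately gives
\begin{equation*}
Y^{*}(t)=Y^{u^{*}}(t)=\underset{u\in\U}{\mathrm{ess}\inf\,}Y^{u}(t),\quad 0\le t\le T,
\end{equation*}
which is \eqref{Y-opt}. Finally, since $\F_{0}$ is trivial, $Y^{u}(0)$ is deterministic and, by Proposition \ref{u-bsde}, $J(u)=e^{Y^{u}(0)}$ for every $u\in\U$. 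Taking $t=0$ in the identity above and exponentiating, the monotonicity of $e^{\cdot}$ yields $J(u^{*})=e^{Y^{u^{*}}(0)}=e^{Y^{*}(0)}=\inf_{u\in\U}J(u)$, which is the remaining claim.

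I do not expect any serious obstacle in this proof: the two hypotheses \eqref{*-H-opt} and \eqref{*-g-opt} precisely align the data of the two BSDEs, and everything else has already been carried out in Propositions \ref{u-bsde} and \ref{comp-bsde-u}. The only mild technical point to keep in mind is that one must verify that substituting $u^{*}$ into $H$ leaves the driver balanced in the sense of \eqref{balancing}, so that uniqueness of \eqref{u-yz-bsde} applies; this is immediate because the driver for $u^{*}$ is a specialization of the linear-in-$\varphi$ driver treated in the proof of Proposition \ref{u-bsde} via the exponential change of variables, whose Girsanov measure is provided by Proposition \ref{L-u-mart} applied to $\ell^{u^{*}}$.
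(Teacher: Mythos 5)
Your proof is correct and follows essentially the same route as the paper: the hypotheses \eqref{*-H-opt}--\eqref{*-g-opt} make $(Y^*,Z^*)$ a solution of the entropic BSDE \eqref{u-yz-bsde} for $u=u^*$, so uniqueness gives $Y^*=Y^{u^*}$, and an already-established domination result yields the essential infimum identity. The only (immaterial) difference is that you close the argument with the comparison result of Proposition \ref{comp-bsde-u}, whereas the paper cites the conclusion \eqref{*-e-opt} of the $\ep$-optimality Proposition \ref{e-optimality} (whose hypothesis holds trivially with $u^{\ep}:=u^*$); both give $Y^*\le Y^u$ for all $u\in\U$ and hence the claim.
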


\begin{proof} In view of Proposition \eqref{bsde-*}, the conditions \eqref{*-H-opt} and \eqref{*-g-opt} imply that $Y^*=Y^{u^*}$.  Due to \eqref{*-e-opt}, we obtain \eqref{Y-opt}. 
\end{proof}

\section{The two-players zero-sum game problem}\label{zero-sum}
In this section we consider a two-players zero-sum game.  Let $\U$ (resp. $\V$) be the set of admissible $U$-valued (resp. $V$-valued) control strategies for the first (resp. second) player, where $(U,\delta_1)$ and $(V,\delta_2)$ are compact metric spaces.\\

For $(u,v),(\bar u,\bar v)\in U\times V$, we set
\begin{equation}\label{delta-u-v}
\delta((u,v),(\bar u,\bar v)):=\delta_1(u,\bar u)+\delta_2(v,\bar v).
\end{equation}
The distance $\delta$ defines a metric on the compact space $U\times V$. 

Let $P$  be the probability measure on $(\Omega, \mathcal F)$ such that $\F_0$ is trivial under which $x$ is a time-homogeneous Markov chain such that $P\circ x^{-1}(0)=\delta_{x_0}$, where $x_0$ is a given point in $I$,  with a deterministic $Q$-matrix $(g_{ij})_{ij}$  satisfying \eqref{G} and (B4) i.e. it satisfies  $\underset{i,j: \, j\neq i}\sum |i-j|^2 g_{ij} < \infty$. As above, $\mathcal{F}_0$ is the trivial $\s$-algebra.  

For $(u,v)\in\U\times \V$, let $P^{u,v}$ be the measure on $(\Om,\F)$ defined by
 \begin{equation}\label{P-u-v}
 dP^{u,v}:=L_T^{u,v}dP,
 \end{equation}
where
\begin{equation}\label{P-u-v-density}
 L^{u,v}(t):=\underset{\substack{i,j\\ i\neq j} }\prod \exp{\left\{ \int_{(0,t]}\ln{\frac{ \l^{u,v}_{ij}(s)}{g_{ij}}}dN_{ij}(s)-\int_{(0,t]}( \l^{u,v}_{ij}(s)-g_{ij})I_i(s)ds  \right\}}, 
\end{equation}

\begin{equation}\label{u-v-lambda}
\l^{u,v}_{ij}(t):=\l_{ij}(t,x,P^u\circ x^{-1}(t),u(t),v(t)),\,\,\, i,j\in I,\,\, 0\le t\le T,
\end{equation}
satisfying the following assumptions. 
\begin{itemize}
 \item[(C1)] For any $Q\in\mathcal{P}(\Omega)$, $(u,v)\in\U\times \V$ and  $i,j\in I$, the process $((\l_{ij}(t, x,Q\circ x^{-1}(t), u(t),v(t)))_t$ is predictable and satisfies, for every $t\in[0,T],\,w\in \Omega,\,\mu\in \mathcal{P}(I)$ and $i\neq j$,
$$
 \underset{(u,v)\in U\times V}{\inf\,}\l_{ij}(t, w,\mu, u,v)>0.
$$

 \item[(C2)] For $p=1,2$ and for every $t\in[0,T]$, $w\in \Om,\, u\in U, v\in V$ and  $\mu\in \mathcal{P}_2(I)$, 
 $$
 \underset{i,j: \, j\neq i}\sum |j-i|^p\l_{ij}(t,w,\mu,u,v)\le C(1+|w|^p_t+\int|y|^p\mu(dy)).
 $$
  \item[(C3)]  For $p=1,2$ and for every $t\in[0,T]$, $w, \tilde w\in \Om, (u,v), (\tilde u,\tilde v)\in U\times V$ and  $\mu, \nu \in\mathcal{P}(I)$,
  $$
  \begin{array}{lll}
  \underset{i,j: \, j\neq i}\sum |j-i|^p|\l_{ij}(t,w,\mu,u,v)-\l_{ij}(t,\tilde w,\nu,\tilde u,\tilde v)| \le C(|w-\tilde w|^p_t+d^p(\mu,\nu)\\ \qquad\qquad\qquad\qquad\qquad\qquad+\delta^p((u,v),(\tilde u,\tilde v)).
  \end{array}
  $$
   \end{itemize}

\ms\no As in Proposition \eqref{L-u-mart}, these assumptions guarantee that  $P^{u,v}$ is a probability measure on  $(\Omega,\F)$ under which the coordinate process $x$ is a chain with intensity matrix $\l^{u.v}$. Let $E^{u,v}$  denote the expectation w.r.t. $P^{u,v}$. Moreover, a similar estimate as  \eqref{x-square-g} and \eqref{W-uv-2}  hold.
\begin{equation}\label{x-square-g-u-v}
\underset{(u,v)\in \U\times \V}{\sup\,}\|P^{u,v}\|_2^2=\underset{(u,v)\in \U\times \V}{\sup\,}E^{u,v}[|x|_T^2]\le  Ce^{CT}\left(1+x_0^2\right).
\end{equation}
For every $u,v\in U$, 
\begin{equation}\label{W-uv-3}
D_T(P^{u,v},P^{\bar{u},\bar{v}})\le Ce^{CT}\delta((u,v),(\bar{u},\bar{v})).
\end{equation} 
Combining \eqref{G}, (C3), \eqref{x-square-g-u-v} and \eqref{x-square-g-0}, we obtain the following

\begin{lemma}  There exists a positive constant $C_T$ independent of the controls $(u,v)$  such that
\begin{equation}\label{lambda-norm-u-v}
E\left[\int_{(0,T]}\mathrm{ess}\sup_{v\in \V}\mathrm{ess}\sup_{u\in \U}\|\l^{u,v}(t)\|^2_g+\mathrm{ess}\sup_{u\in \U}\mathrm{ess}\sup_{v\in \V}\|\l^{u,v}(t)\|^2_gdt\right]\le C_T.
\end{equation}
\end{lemma}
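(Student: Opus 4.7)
\medskip\noindent\textbf{Proof plan.} The statement is the direct two-player analogue of the one-player lemma above on $E\big[\int_{(0,T]}\mathrm{ess}\sup_{u\in\U}\|\l^{u}(t)\|_g^2\,dt\big]$, and the plan is to copy that proof, substituting conditions (C1)--(C3) for (B1)--(B3) and the uniform $L^2$-bound \eqref{x-square-g-u-v} for \eqref{x-square-g}. The key observation is that the pointwise bound we derive will already be free of $(u,v)$, so both iterated essential suprema in the statement are dominated by the same deterministic-in-$(u,v)$ majorant, rendering the ordering of the suprema irrelevant.

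The first step is the coarse pointwise bound
\begin{equation*}
\|\l^{u,v}(t)\|_g^2 \le \Big(\sum_{i,j:\,i\neq j} g_{ij}\Big)\Big(\sum_{i,j:\,i\neq j} \l^{u,v}_{ij}(t)\Big)^2,
\end{equation*}
which is immediate from the definition \eqref{g-t} (only the indicator $\mathbf{1}_{\{x(t^-)=i\}}$ for the current state contributes) together with $\sum a_k^2\le(\sum a_k)^2$ for nonnegative $a_k$. The factor $\sum g_{ij}$ is finite by (B4). Next, since $|j-i|\ge 1$ whenever $i\neq j$, condition (C2) with $p=1$ gives
\begin{equation*}
\sum_{i,j:\,i\neq j}\l^{u,v}_{ij}(t) \le \sum_{i,j:\,i\neq j}|j-i|\,\l^{u,v}_{ij}(t) \le C\Big(1+|x|_t+\int|y|\,P^{u,v}\!\circ x^{-1}(t)(dy)\Big),
\end{equation*}
and Jensen's inequality bounds the last integral by $\|P^{u,v}\|_2$. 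Combining with step one and squaring yields $\|\l^{u,v}(t)\|_g^2 \le C'(1+|x|_t^2+\|P^{u,v}\|_2^2)$, whose only $(u,v)$-dependence is through $\|P^{u,v}\|_2^2$. By \eqref{x-square-g-u-v} this is bounded uniformly in $(u,v)$ by $Ce^{CT}(1+x_0^2)$, so both iterated essential suprema appearing in the statement are dominated $P$-a.s.\@ by the same $(u,v)$-free quantity $C''(1+|x|_t^2+e^{CT}(1+x_0^2))$.

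The final step is to integrate in $t\in(0,T]$, take the $P$-expectation, and invoke \eqref{x-square-g-0} to control $E[|x|_T^2]\le Ce^{CT}(1+x_0^2)$; this produces a constant $C_T$ depending only on $T$, $x_0$, $\sum g_{ij}$ and the constant in (C2), as required. I do not expect any substantive obstacle: the deliberate coarseness of step one makes the $(u,v)$-dependence factor through the single deterministic quantity $\sup_{u,v}\|P^{u,v}\|_2$, so the measurability and interchangeability issues that normally attend iterated essential suprema (cf.\@ Proposition \ref{ess-inf}) do not arise here; the only care needed is to record a pointwise bound that is already uniform in $(u,v)$ before passing to the essential suprema, which is exactly what step one delivers.
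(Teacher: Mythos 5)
Your proof is correct and follows essentially the same route as the paper: the paper gives no separate proof for the two-player version, stating only that it follows by combining \eqref{G}, the growth condition, \eqref{x-square-g-u-v} and \eqref{x-square-g-0}, which is precisely the transplantation of the one-player argument (bound $\|\l^{u,v}(t)\|_g^2$ by $(\sum_{i\neq j}g_{ij})(\sum_{i\neq j}\l^{u,v}_{ij}(t))^2$, then use the linear growth and the uniform second-moment estimate) that you carry out. Your explicit remark that the majorant is already $(u,v)$-free, so the order of the iterated essential suprema is immaterial, is a useful clarification the paper leaves implicit.
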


\ms Let $f$  be a measurable function from $[0,T]\times\Om\times\mathcal{P}_2(I)\times U\times V$ into $\R$ and $h$ be a measurable function from $I\times\mathcal{P}_2(I)$ into $\R$ such that 

\begin{itemize}
\item[(C5)] $f$ and $h$ are uniformly  bounded.   
\end{itemize}
Setting 
$$
f(t,x,u,v):=f(t,x,P^{u,v}\circ x^{-1}(t),u(t),v(t))
$$
The performance functional $J(u,v),\,(u,v)\in\U\times\V$, associated with the controlled Markov chain is 
\begin{equation}\label{J-u-v}
J(u,v):=E^{u,v}\left[\exp{\left\{\int_0^T f(t,x,u,v)dt+ h(x(T),P^{u,v}\circ x^{-1}(T))\right\}}\right].
\end{equation}
  \\
The zero-sum game we consider is between two players, where the first player (with control $u$) wants to minimize the payoff (\ref{J-u-v}), while  the second player (with control $v$) wants to maximize it.  The zero-sum game boils down to showing existence of a saddle-point for the game i.e.  to show existence of a pair $(\widehat u, \widehat v)$ of strategies such that  
\begin{equation}\label{J-u-v-hat}
J(\hat u, v) \le J(\widehat u, \widehat v)\le J(u,\widehat v)
\end{equation}
for each $(u, v)\in\U\times\V$.\\
 The corresponding optimal dynamics is given by the probability measure $\widehat P$ on $(\Om,\F)$ defined by
\begin{equation}\label{opt-P}
d\widehat P=L_T^{\widehat u, \widehat v}dP
\end{equation}
under which the chain has intensity $\l^{\widehat u, \widehat v}$.

\ms\noindent For $(t,w,\mu,u)\in [0,T]\times\Om\times\mathcal{P}_2(I)\times U\times V$ and  matrices $z=(z_{ij})$ with real-valued entries, we introduce the Hamiltonian associated with the optimal control problem (\ref{J-u-v})
\begin{equation}\label{ham-u-v}
H(t,w,\mu,,u,v,z):=f(t,w,\mu,u,v)+ \langle \ell(t, w,\mu,u,v),e^z-1\rangle_g,
\end{equation}
where we recall that 
$$
\ell_{ij}(t, w,\mu,u,v)=\frac{\l_{ij}(t, w,\mu,u,v)}{g_{ij}}-1,\,\,i\neq j,\quad \ell_{ii}(t, w,\mu,u,v)=0. 
$$

Next, let $z\in R^{I\times I}$ and set 
\begin{itemize}
\item $\underline{H}(t,x,z):=\underset{v\in\V}{\mathrm{ess}\sup}\, \underset{u\in\U}{\mathrm{ess}\inf}\, H(t,x,u,v,z)$
\item $\overline{H}(t,x,z):=\underset{u\in\U}{\mathrm{ess}\inf}\, \underset{v\in\V}{\mathrm{ess}\sup}\, H(t,x,u,v,z),$
\item $\underline{h}(x):=\underset{v\in\V}{\mathrm{ess}\sup}\, \underset{u\in\U}{\mathrm{ess}\inf}\, h(x(T), P^{u,v}\circ x^{-1}(T))$,
\item $\overline{h}(x):=\underset{u\in\U}{\mathrm{ess}\inf}\, \underset{v\in\V}{\mathrm{ess}\sup}\, h(x(T), P^{u,v}\circ x^{-1}(T))$,
\item $(\underline{Y},\underline{Z})$ the solution of the entropic BSDE associated with $(\underline{H}, \underline{h})$ and  $(\overline{Y},\overline{Z})$ the solution of the entropic BSDE associated with $(\overline{H}, \overline{h})$.
\end{itemize}
Following a similar proof as the one leading to Proposition \eqref{bsde-*}, the driver of the BSDE associated with $(e^{\underline{Y}}, e^{\underline{Z}}-1)$ reads
\begin{equation}\label{down-F}
\underline{F}(t,x,y,z):=\underset{v\in\V}{\mathrm{ess}\sup}\, \underset{u\in\U}{\mathrm{ess}\inf}\{f(t,x,u,v)y+\langle \ell^{u,v}(t),z\rangle_g\},
\end{equation}
and the one associated with $(e^{\overline{Y}}, e^{\overline{Z}}-1)$ is
\begin{equation}\label{up-F}
\overline{F}(t,x,y,z):=\underset{u\in\U}{\mathrm{ess}\inf}\, \underset{v\in\V}{\mathrm{ess}\sup}\{f(t,x,u,v)y+\langle \ell^{u,v}(t),z\rangle_g\}.
\end{equation}

To $\underline{F}$ we associate  $\wh\b(t,x,z,\bar{z})=(\wh{\b}_{ij}(t,x,z,\bar{z}))_{ij}$  given by  
\begin{equation}\label{beta-down-F}
\wh\b(t,x,z,\bar{z})=\r_{\b}(t,x,z,\bar{z}) \bar{\b}(t,x,z,\bar{z})+(1-\r_{\b}(t,x,z,\bar{z}))\underline{\b}(t,x,z,\bar{z}),
\end{equation}
with 
\begin{equation}\label{rho-down-F}
\r_{\b}(t,x,z,\bar{z})=\frac{\underline{F}(t,x,y,z)-\underline{F}(t,x,y,\bar{z})-\langle \underline{\b}(t,x,z,\bar{z}),z-\bar{z}\rangle_g}{ \langle \bar{\b}(t,x,z,\bar{z})-\underline{\b}(t,x,z,\bar{z}),z-\bar{z}\rangle_g} \in [0,1],
\end{equation}
where $\overline{\b}=(\overline{\b}_{ij})_{ij}$ and  $\underline{\b}=(\underline{\b}_{ij})_{ij}$ read, for $i\neq j$,
$$
\begin{array}{lll}
\overline{\b}_{ij}(t,x,z,\bar{z}):=\underset{v\in\V}{\mathrm{ess}\sup\,}\underset{u\in \U}{\mathrm{ess}\sup\,}\ell_{ij}^{u,v}(t)\mathbf{1}_{\{z_{ij}>\bar{z}_{ij}\}}+\underset{v\in\V}{\mathrm{ess}\inf\,}\underset{u\in \U}{\mathrm{ess}\inf\,}\ell_{ij}^{u,v}(t)\mathbf{1}_{\{z_{ij}\le \bar{z}_{ij}\}},\\ 
\underline{\b}_{ij}(t,x,z,\bar{z}):=\underset{v\in\V}{\mathrm{ess}\sup\,}\underset{u\in \U}{\mathrm{ess}\sup\,}\ell_{ij}^{u,v}(t)\mathbf{1}_{\{\bar{z}_{ij}>z_{ij}\}}+\underset{v\in\V}{\mathrm{ess}\inf\,}\underset{u\in \U}{\mathrm{ess}\inf\,}\ell_{ij}^{u,v}(t)\mathbf{1}_{\{\bar{z}_{ij}\le z_{ij}\}}.
\end{array}
$$

To $\overline{F}$ we associate $\wh\t(t,x,z,\bar{z})=(\wh{\t}_{ij}(t,x,z,\bar{z}))_{ij}$ given by  
\begin{equation}\label{theta-up-F}
\wh\t(t,x,z,\bar{z})=\r_{\t}(t,x,z,\bar{z}) \bar{\t}(t,x,z,\bar{z})+(1-\r_{\t}(t,x,z,\bar{z}))\underline{\t}(t,x,z,\bar{z}),
\end{equation}
with 
\begin{equation}\label{rho-up-F}
\r_{\t}(t,x,z,\bar{z})=\frac{\overline{F}(t,x,y,z)-\overline{F}(t,x,y,\bar{z})-\langle \underline{\t}(t,x,z,\bar{z}),z-\bar{z}\rangle_g}{ \langle \bar{\t}(t,x,z,\bar{z})-\underline{\t}(t,x,z,\bar{z}),z-\bar{z}\rangle_g} \in [0,1],
\end{equation}
where, $\overline{\t}=(\overline{\t}_{ij})_{ij}$ and  $\underline{\t}=(\underline{\t}_{ij})_{ij}$ are given by  ($i\neq j$)
$$
\begin{array}{lll}
\overline{\t}_{ij}(t,x,z,\bar{z}):=\underset{u\in\U}{\mathrm{ess}\sup\,}\underset{v\in \V}{\mathrm{ess}\sup\,}\ell_{ij}^{u,v}(t)\mathbf{1}_{\{z_{ij}\ge\bar{z}_{ij}\}}+\underset{u\in\U}{\mathrm{ess}\inf\,}\underset{v\in \V}{\mathrm{ess}\inf\,}\ell_{ij}^{u,v}(t)\mathbf{1}_{\{z_{ij}< \bar{z}_{ij}\}},\\ 
\underline{\t}_{ij}(t,x,z,\bar{z}):=\underset{u\in\U}{\mathrm{ess}\sup\,}\underset{v\in \V}{\mathrm{ess}\sup\,}\ell_{ij}^{u,v}(t)\mathbf{1}_{\{\bar{z}_{ij}\ge z_{ij}\}}+\underset{u\in\U}{\mathrm{ess}\inf\,}\underset{v\in \V}{\mathrm{ess}\inf\,}\ell_{ij}^{u,v}(t)\mathbf{1}_{\{\bar{z}_{ij}<z_{ij}\}}.
\end{array}
$$
We omit the proof of the next lemma as it is similar to \eqref{driver-F-1}.
\begin{lemma} \label{balancing-game} $\underline{F}$ and $\overline{F}$ are balanced: For every $z,\bar z\in\R^{I\times I}$, 
\begin{equation}\begin{array}{lll}
\underline{F}(t,x,y,z)-\underline{F}(t,x,y,\bar z)=\langle \wh\b(t,x,z,\bar{z}),z-\bar{z}\rangle_g,\\
\overline{F}(t,x,y,z)-\overline{F}(t,x,y,\bar z)=\langle \wh\t(t,x,z,\bar{z}),z-\bar{z}\rangle_g,
\end{array}
\end{equation}
where $\wh\b$ and $\wh\t$ are given by \eqref{beta-down-F} and \eqref{theta-up-F}.
\end{lemma}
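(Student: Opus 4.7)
The plan is to imitate the proof of the identity \eqref{driver-F-1} for the single-control case. First I would establish two-sided a priori bounds
\begin{equation*}
\langle \underline{\b}(t,x,z,\bar z),z-\bar z\rangle_g \le \underline{F}(t,x,y,z)-\underline{F}(t,x,y,\bar z)\le \langle \overline{\b}(t,x,z,\bar z),z-\bar z\rangle_g,
\end{equation*}
and then interpolate between them with the convex-combination coefficient $\r_\b\in[0,1]$ from \eqref{rho-down-F} so as to promote the bounds to an equality. By construction $\wh\b=\r_\b\overline{\b}+(1-\r_\b)\underline{\b}$ delivers the desired identity, with the convention $\r_\b=1$ on the event where the upper and lower bounds coincide (so the denominator in \eqref{rho-down-F} vanishes). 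The treatment of $\overline{F}$ is identical, with $\mathrm{ess}\inf_u\mathrm{ess}\sup_v$ in place of $\mathrm{ess}\sup_v\mathrm{ess}\inf_u$ and $\underline{\t},\overline{\t},\wh\t$ in place of $\underline{\b},\overline{\b},\wh\b$.

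The upper bound would rest on the elementary inequality
\begin{equation*}
\mathrm{ess}\sup_{v\in\V}\mathrm{ess}\inf_{u\in\U} A(u,v)-\mathrm{ess}\sup_{v\in\V}\mathrm{ess}\inf_{u\in\U} B(u,v)\le \mathrm{ess}\sup_{u\in\U,\,v\in\V}\bigl(A(u,v)-B(u,v)\bigr),
\end{equation*}
obtained by writing $A(u,v)=B(u,v)+(A-B)(u,v)$ and successively bounding the inner $\mathrm{ess}\inf_u$ and the outer $\mathrm{ess}\sup_v$. Applied to $A(u,v)=f(t,x,u,v)y+\langle \ell^{u,v}(t),z\rangle_g$ and $B(u,v)=f(t,x,u,v)y+\langle \ell^{u,v}(t),\bar z\rangle_g$, it yields $\underline{F}(t,x,y,z)-\underline{F}(t,x,y,\bar z)\le \mathrm{ess}\sup_{u,v}\langle \ell^{u,v}(t),z-\bar z\rangle_g$. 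A componentwise decomposition over $(i,j)$, selecting $\mathrm{ess}\sup_{u,v}\ell^{u,v}_{ij}$ on $\{z_{ij}>\bar z_{ij}\}$ and $\mathrm{ess}\inf_{u,v}\ell^{u,v}_{ij}$ on $\{z_{ij}\le\bar z_{ij}\}$, then upgrades this to $\mathrm{ess}\sup_{u,v}\langle \ell^{u,v}(t),z-\bar z\rangle_g\le \langle \overline{\b}(t,x,z,\bar z),z-\bar z\rangle_g$.

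The matching lower bound I would obtain by applying this upper bound with the roles of $z$ and $\bar z$ interchanged, producing $\underline{F}(t,x,y,\bar z)-\underline{F}(t,x,y,z)\le \langle \overline{\b}(t,x,\bar z,z),\bar z-z\rangle_g$. Inspecting the indicator decompositions in \eqref{beta-down-F} one verifies $\overline{\b}(t,x,\bar z,z)=\underline{\b}(t,x,z,\bar z)$ $P$-a.s., and rearranging signs delivers the required lower bound. The main obstacle I anticipate is that the componentwise maximization in the upper bound is in general a strict inequality, because the $\mathrm{ess}\sup_{u,v}$ on the left requires a \emph{single} $(u,v)$ to be used across all coordinates $(i,j)$, whereas the right-hand side is free to pick the optimizer coordinate by coordinate. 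This slack is precisely what prevents an explicit identification of $\wh\b$ as a deterministic selection from $\{\ell^{u,v}\}$ and forces the introduction of the data-dependent interpolation coefficient $\r_\b$; once this is accepted, handling the degenerate case of the denominator in \eqref{rho-down-F} is routine.
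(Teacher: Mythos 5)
Your proposal is correct and follows essentially the same route as the paper: the paper omits the proof of this lemma, referring to the argument for \eqref{driver-F-1} in Proposition \ref{bsde-*}, which is exactly your scheme of sandwiching $\underline{F}(t,x,y,z)-\underline{F}(t,x,y,\bar z)$ between $\langle \underline{\b},z-\bar z\rangle_g$ and $\langle \overline{\b},z-\bar z\rangle_g$ via a componentwise selection and then interpolating with the coefficient $\r_\b\in[0,1]$. The only cosmetic difference is that you obtain the lower bound by swapping $z$ and $\bar z$ in the upper bound (using $\overline{\b}(t,x,\bar z,z)=\underline{\b}(t,x,z,\bar z)$), whereas the paper derives one bound directly and invokes symmetry for the other; your closing observation that the componentwise bound may be strict, and that this is precisely why the data-dependent $\r_\b$ is needed, is accurate.
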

Again, by a similar proof as the one leading to Proposition \eqref{bsde-*}, there exists a unique solution $(\underline{Y},\underline{Z})$ (resp.  $(\overline{Y},\overline{Z})$) to the entropic BSDE associated with $(\underline{H}, \underline{h})$ (resp. $(\overline{H}, \overline{h})$).

\begin{definition}[Isaacs' condition]
We say that the Isaacs' condition holds for the game if
$$
\left\{\begin{array}{lll}
\underline{H}(t,x,z)=\overline{H}(t,x,z),\quad 0\le t< T, \\
\underline{h}(x)=\overline{h}(x).
\end{array}
\right.
$$
\end{definition}

Due to Lemma \eqref{balancing-game} , we may apply the comparison theorem (Proposition \eqref{comparison}) to the BSDEs satisfied by $e^{\underline{Y}}$ and $e^{\overline{Y}}$, to obtain the following

\begin{proposition}\label{game-comparison} For every $t\in[0,T]$, it holds that $\underline{Y}_t\le \overline{Y}_t$, $\,P$-a.s.. Moreover, if the Issac's condition holds, then  
\begin{equation}\label{nash}
\underline{Y}(t)=\overline{Y}(t):=Y(t),\quad P\text{-a.s.},\quad 0\le t\le T.
\end{equation}
\end{proposition}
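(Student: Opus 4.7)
The plan is to reduce the entropic BSDEs for $(\underline{Y},\underline{Z})$ and $(\overline{Y},\overline{Z})$ to the linear BSDEs satisfied by the exponentiated processes, and then invoke the comparison theorem \eqref{comparison} via the balanced property of the drivers established in Lemma \eqref{balancing-game}. Concretely, performing the transformation $\underline{y}(t):=e^{\underline{Y}(t)}$, $\underline{\varphi}(t):=\underline{y}(t^-)(e^{\underline{Z}(t)}-1)$ (and analogously for the overlined quantities), exactly as in the proof of Proposition \eqref{bsde-*}, one checks that $(\underline{y},\underline{\varphi})$ solves a linear BSDE with terminal value $e^{\underline{h}(x)}$ and driver $\underline{F}$ given by \eqref{down-F}, while $(\overline{y},\overline{\varphi})$ solves a linear BSDE with terminal value $e^{\overline{h}(x)}$ and driver $\overline{F}$ given by \eqref{up-F}.

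For the first claim, the elementary $\sup\inf\le\inf\sup$ inequality applied pointwise yields $\underline{F}(t,x,y,z)\le \overline{F}(t,x,y,z)$ for every $y>0$ and $z\in\R^{I\times I}$, and similarly $e^{\underline{h}(x)}\le e^{\overline{h}(x)}$ $P$-a.s. Since, by Lemma \eqref{balancing-game}, both $\underline{F}$ and $\overline{F}$ are balanced in the sense of \eqref{balancing} with $\ell$-processes ($\wh\b$ and $\wh\t$) satisfying the integrability required by assumption (A2) (this last point being guaranteed by \eqref{lambda-norm-u-v}), Proposition \eqref{comparison} applies and gives $\underline{y}(t)\le \overline{y}(t)$ $P$-a.s. for every $t\in[0,T]$, from which $\underline{Y}(t)\le \overline{Y}(t)$ follows by monotonicity of the logarithm.

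For the second claim, under the Isaacs' condition $\underline{H}=\overline{H}$ and $\underline{h}=\overline{h}$, substituting $z=\ln(1+w)$ (valid componentwise since $w_{ij}=e^{z_{ij}}-1>-1$) in the identity $\underline{H}=\overline{H}$ and multiplying by $y>0$ translates into $\underline{F}(t,x,y,yw)=\overline{F}(t,x,y,yw)$ for every $y>0$ and admissible $w$, hence the two linear BSDEs coincide and the terminal values agree. Applying Proposition \eqref{comparison} in the reverse direction (again using that both drivers are balanced via Lemma \eqref{balancing-game}) yields $\overline{y}(t)\le\underline{y}(t)$, which combined with the first part gives $\underline{Y}(t)=\overline{Y}(t)$ $P$-a.s.

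The only mildly delicate point I anticipate is bookkeeping the translation of the Isaacs' identity from the entropic Hamiltonian $H$ (a function of $Z$ through $e^{Z}-1$) to the linearized driver $F$ (a function of the variable $\varphi=y(e^{Z}-1)$); this is purely a change of variables and is legitimate because the solutions satisfy $y>0$. Everything else is a direct application of the comparison theorem, and no measurable-selection or fixed-point machinery is needed beyond what has already been established.
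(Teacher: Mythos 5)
Your proposal is correct and follows essentially the same route as the paper: the paper's (one-line) proof likewise passes to the linear BSDEs satisfied by $e^{\underline{Y}}$ and $e^{\overline{Y}}$ with drivers $\underline{F}$ and $\overline{F}$, invokes Lemma \ref{balancing-game} for the balanced property, and applies the comparison theorem (Proposition \ref{comparison}), with the $\sup\inf\le\inf\sup$ inequality giving the ordering and the Isaacs' condition giving equality. Your write-up simply supplies the details (the change of variables and the translation of the Isaacs' identity to the linearized drivers) that the paper leaves implicit.
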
  

In the next theorem, we formulate conditions for which the zero-sum game has a value.  

For $(u,v)\in\U\times \V$, let $(Y^{u,v},Z^{u,v})$ be the solution of the entropic BSDE associated to $(H,h)$: 
\begin{equation*}\label{u-v-yz-bsde}\left\{\begin{array}{lll}
-Y^{u,v}(t)=\{H(t,x,u,v,Z^{u,v}(t))+\langle \tau(Z^{u,v}(t)),1\rangle_g \}dt-Z^{u,v}(t)dM(t),\\
Y^{u,v}(T)=h(x(T),P^{u,v}\circ x^{-1}(T)),
\end{array}
\right.
\end{equation*} 

\begin{theorem}[Existence of a value of the game]\label{value-game}
Assume that, for every $0\le t<T$, 
$$
\underline{H}(t,x,Z(t))=\overline{H}(t,x,Z(t)).
$$
If there exists $(\widehat{u},\widehat{v})\in\U\times\V$ such that, for every $0\le t<T$, 
\begin{equation}\label{sp-H}
\underline{H}(t,x,Z(t))=\underset{u\in \U}{\mathrm{ess}\inf}\, H(t,x,u,\widehat{v},Z(t))=\underset{v\in \V}{\mathrm{ess}\sup}\, H(t,x,\widehat{u},v,Z(t)),
\end{equation}
and
\begin{equation}\label{sp-g}
\underline{h}(x)=\overline{h}(x)=\underset{u\in \U}{\mathrm{ess}\inf}\, h(x(T),P^{u,\widehat{v}}\circ x^{-1}(T))=\underset{v\in \V}{\mathrm{ess}\sup}\, \, h(x(T),P^{\widehat{u},v}\circ x^{-1}(T)).
\end{equation}
Then, 
\begin{equation}\label{value}
Y(t)=\underset{u\in \U}{\mathrm{ess}\inf}\,\underset{v\in \V}{\mathrm{ess}\sup}Y^{u,v}(t)=\underset{v\in \V}{\mathrm{ess}\sup}\,\underset{u\in \U}{\mathrm{ess}\inf}\,Y^{u,v}(t), \quad 0\le t\le T.
\end{equation}
Moreover, the pair $(\widehat{u},\widehat{v})$ is a saddle-point for the game.
\end{theorem}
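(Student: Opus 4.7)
The plan is to follow the same strategy as the proof of Theorem \ref{opt-BSDE}, applied simultaneously to both players. Under the Isaacs-type hypothesis $\underline H(t,x,Z(t))=\overline H(t,x,Z(t))$ together with $\underline h=\overline h$ (the outer equality in \eqref{sp-g}), Proposition \ref{game-comparison} yields $\underline Y\equiv\overline Y\equiv Y$, and characterizes $Y$ as the unique solution of the entropic BSDE with driver $\underline H(t,x,z)+\langle\tau(z),1\rangle_g$ and terminal value $\underline h(x)$. The proof then reduces to identifying $Y$ with $Y^{\widehat u,\widehat v}$ and establishing the sandwich bounds $Y^{\widehat u,v}\le Y\le Y^{u,\widehat v}$ for every $(u,v)\in\U\times\V$, from which \eqref{value} follows by the classical minimax inequality.

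For the identification, the saddle-point condition \eqref{sp-H} evaluated at $z=Z(t)$ gives the chain
\[
\underline H(t,x,Z(t))=\underset{u\in\U}{\mathrm{ess}\inf}\,H(t,x,u,\widehat v,Z(t))\le H(t,x,\widehat u,\widehat v,Z(t))\le \underset{v\in\V}{\mathrm{ess}\sup}\,H(t,x,\widehat u,v,Z(t))=\underline H(t,x,Z(t)),
\]
hence $H(t,x,\widehat u,\widehat v,Z(t))=\underline H(t,x,Z(t))$. The analogous chain built from \eqref{sp-g} forces $h(x(T),P^{\widehat u,\widehat v}\circ x^{-1}(T))=\underline h(x)=Y(T)$. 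Therefore $(Y,Z)$ solves the entropic BSDE associated with the control pair $(\widehat u,\widehat v)$, and by the uniqueness statement in the two-player analogue of Proposition \ref{u-bsde}, $Y=Y^{\widehat u,\widehat v}$.

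For the comparison step I adapt the linearization and change-of-measure technique of Proposition \ref{e-optimality}. Fix $u\in\U$ and set $y:=e^{Y}$, $y^{u,\widehat v}:=e^{Y^{u,\widehat v}}$, $\varphi(t):=y(t^-)(e^{Z(t)}-1)$, $\varphi^{u,\widehat v}(t):=y^{u,\widehat v}(t^-)(e^{Z^{u,\widehat v}(t)}-1)$. Both $(y,\varphi)$ and $(y^{u,\widehat v},\varphi^{u,\widehat v})$ solve linear Markov chain BSDEs: the driver of $y$ is $\underline F(t,x,y,z):=\underset{v\in\V}{\mathrm{ess}\sup}\,\underset{u'\in\U}{\mathrm{ess}\inf}\{yf(t,x,u',v)+\langle\ell^{u',v}(t),z\rangle_g\}$ (the game version of \eqref{down-F}), while the driver of $y^{u,\widehat v}$ is $yf(t,x,u,\widehat v)+\langle\ell^{u,\widehat v}(t),z\rangle_g$. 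Along $(y,\varphi)$, \eqref{sp-H} forces $\underline F(t,x,y(t^-),\varphi(t))\le y(t^-)f(t,x,u,\widehat v)+\langle\ell^{u,\widehat v}(t),\varphi(t)\rangle_g$, and \eqref{sp-g} gives $y(T)\le y^{u,\widehat v}(T)$. Subtracting the two linear BSDEs, multiplying by the exponential factor $\exp\{\int_t^s f(r,x,u,\widehat v)dr\}$, integrating by parts, and taking conditional expectation under $P^{u,\widehat v}$ (a bona fide probability measure by Proposition \ref{L-u-mart} applied to the pair $(u,\widehat v)$) exactly as in the proof of Proposition \ref{e-optimality}, one obtains $y(t)\le y^{u,\widehat v}(t)$, i.e.\ $Y\le Y^{u,\widehat v}$. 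The symmetric argument, swapping the roles of the two players and using conditional expectation under $P^{\widehat u,v}$, gives $Y^{\widehat u,v}\le Y$ for every $v\in\V$. Chained together these yield
\[
\underset{u\in\U}{\mathrm{ess}\inf}\,\underset{v\in\V}{\mathrm{ess}\sup}\,Y^{u,v}(t)\le\underset{v\in\V}{\mathrm{ess}\sup}\,Y^{\widehat u,v}(t)\le Y(t)\le\underset{u\in\U}{\mathrm{ess}\inf}\,Y^{u,\widehat v}(t)\le\underset{v\in\V}{\mathrm{ess}\sup}\,\underset{u\in\U}{\mathrm{ess}\inf}\,Y^{u,v}(t),
\]
which, combined with the universal minimax inequality, forces all five quantities to coincide with $Y(t)$, proving \eqref{value}. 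Exponentiating the two sandwich bounds at $t=0$ and invoking $J(u,v)=e^{Y^{u,v}(0)}$ (the two-player analogue of Proposition \ref{u-bsde}, valid because $\F_0$ is trivial) delivers the saddle-point property $J(\widehat u,v)\le J(\widehat u,\widehat v)\le J(u,\widehat v)$.

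The main technical obstacle is that the saddle-point equalities in \eqref{sp-H} are prescribed only at the particular process $z=Z(t)$, so Proposition \ref{comparison} in its ``for all $(y,z)$'' form cannot be invoked directly. The workaround, as in Proposition \ref{e-optimality}, is precisely the exponential linearization above: once one passes to $(y,\varphi)$, the driver inequality is only needed along the solution path, and the comparison is then realized concretely via the explicit representation formula under the Girsanov-tilted measures $P^{u,\widehat v}$ and $P^{\widehat u,v}$, whose well-definedness follows from the strict positivity in (C1), the bound \eqref{lambda-norm-u-v}, and Corollary \ref{L-l-hat}.
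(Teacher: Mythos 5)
Your proof is correct, and it reorganizes the key comparison step differently from the paper. The paper introduces two families of intermediate best-response entropic BSDEs, $(\widehat Y^u,\widehat Z^u)$ with driver built from $\underset{v\in\V}{\mathrm{ess}\sup}\,H(t,x,u,v,\cdot)$ and $(\widetilde Y^v,\widetilde Z^v)$ with $\underset{u\in\U}{\mathrm{ess}\inf}\,H(t,x,u,v,\cdot)$; since these drivers dominate (resp.\ are dominated by) $\underline H=\overline H$ for \emph{every} $z$, Proposition \eqref{comparison} applies directly, and \eqref{sp-H}--\eqref{sp-g} are used only to identify $\widehat Y^{\widehat u}=Y=\widetilde Y^{\widehat v}=Y^{\widehat u,\widehat v}$ by uniqueness, together with the identification \eqref{*-*} of $\widehat Y^{\widehat u}$ and $\widetilde Y^{\widehat v}$ with $\underset{v\in\V}{\mathrm{ess}\sup}\,Y^{\widehat u,v}$ and $\underset{u\in\U}{\mathrm{ess}\inf}\,Y^{u,\widehat v}$. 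You instead bypass the intermediate processes and prove the sandwich $Y^{\widehat u,v}\le Y\le Y^{u,\widehat v}$ head-on by the linearization and change-of-measure computation of Proposition \eqref{e-optimality} with $\ep=0$, which needs the driver inequality only along the solution path $(y(t^-),\varphi(t))$ --- exactly what \eqref{sp-H} supplies after multiplication by $y(t^-)>0$, as in \eqref{H*-F}. The cost is a repetition of the integration-by-parts/Girsanov computation (and the integrability checks under $P^{u,\widehat v}$ and $P^{\widehat u,v}$); the gain is an explicit justification of why a comparison is legitimate when the minimax identities are prescribed only at $z=Z(t)$, a point the paper's route absorbs into the assertion \eqref{*-*} ``by uniqueness,'' which itself tacitly rests on an $\ep$-optimality argument of the kind you carry out. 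Your identification $Y=Y^{\widehat u,\widehat v}$ via the two-sided chain at $z=Z(t)$ and the final minimax bookkeeping coincide with the paper's. One cosmetic caveat you inherit from the statement itself: Proposition \eqref{game-comparison} is formulated under the full Isaacs condition (equality for all $z$), whereas the theorem assumes it only at $z=Z(t)$; the equality $\underline Y=\overline Y$ then follows from uniqueness of the entropic BSDE rather than from that proposition as literally stated.
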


\begin{proof} Let $(u,v)\in\U\times \V$ and $(\widehat Y^u,\widehat Z^u)$ and $(\widetilde Y^v,\widetilde Z^v)$ be the solution of the following entropic BSDE 
\begin{equation*}\label{u-v-yz-bsde}\left\{\begin{array}{ll}
-\widehat Y^{u}(t)=\{\underset{v\in \V}{\mathrm{ess}\sup}\,H(t,x,u,v,\widehat Z^{u}(t)) +\langle \tau(\widehat Z^{u}(t)),1\rangle_g\}dt-\widehat Z^{u}(t)dM(t),\\
\widehat Y^{u}(T)=\underset{v\in \V}{\mathrm{ess}\sup}\,h(x(T),P^{u,v}\circ x^{-1}(T)),
\end{array}
\right.
\end{equation*} 
\begin{equation*}\label{u-v-yz-bsde}\left\{\begin{array}{ll}
-\widetilde Y^{v}(t)=\{\underset{u\in \U}{\mathrm{ess}\inf}\, H(t,x,u,v,\widetilde Z^{v}(t))+\langle \tau(\widetilde Z^{v}(t)),1\rangle_g \} dt-\widetilde Z^{v}(t)dM(t),\\
\widetilde Y^{v}(T)=\underset{u\in \U}{\mathrm{ess}\inf}\,h(x(T),P^{u,v}\circ x^{-1}(T)).
\end{array}
\right.
\end{equation*} 
By uniqueness of the solutions of the BSDEs, we have
\begin{equation}\label{*-*}
\widehat Y^{u^*}(t)=\underset{v\in \V}{\mathrm{ess}\sup}\, Y^{u^*,v}(t),\quad \widetilde Y^{v^*}(t)=\underset{u\in \U}{\mathrm{ess}\inf}\, Y^{u,v^*}(t),
\end{equation}
and, by comparison, we have
$$
 \widehat Y^u(t)\ge Y(t) \ge \underset{v\in \V}{\mathrm{ess}\sup}\, \widetilde Y^v(t).
$$
Therefore,
$$
\underset{u\in \U}{\mathrm{ess}\inf}\, \widehat Y^u(t)\ge Y(t)\ge \underset{v\in \V}{\mathrm{ess}\sup}\, \widehat Y^v(t).
$$
But, by \eqref{sp-H}, \eqref{sp-g} and  the uniqueness of the solutions of these entropic  BSDEs, we have
$\widehat Y^{\widehat{u}}=Y=\widetilde Y^{\widehat{v}}$. Therefore, 
$$
\widehat{Y}^{\widehat{u}}(t)=\underset{u\in \U}{\mathrm{ess}\inf}\, \widehat Y^u(t)=Y(t)=\widetilde Y^{\widehat{v}}(t)=\underset{v\in \V}{\mathrm{ess}\sup}\,\widetilde Y^v(t)=Y^{\widehat{u},\widehat{v}}(t).
$$
Using \eqref{*-*}, we obtain
 $$
Y(t)=Y^{\widehat{u},\widehat{v}}(t)=\underset{v\in\V}{\mathrm{ess}\sup}\,Y^{\widehat{u}, v}(t)=\underset{u\in\U}{\mathrm{ess}\inf}\,Y^{u, \widehat{v}}(t).
$$
Therefore,
$$
Y^{u, \widehat{v}}(t)\le Y^{\widehat{u}, \widehat{v}}(t)\le Y^{\widehat{u}, v}(t).
$$
Thus, $Y^{\widehat{u},\widehat{v}}(t)$ is the value of the game and $(\widehat{u},\widehat{v})$ is a saddle-point.
\end{proof}

\subsection*{Concluding remarks}
\begin{enumerate}\label{selection}
\item In the control problem, if the marginal law $P^u\circ x_s^{-1}$ of $x_s$ under $P^u$ is a function of $u(s)$ only and does not depend on the whole path of $u$ over $[0,s]$, it suffices to take the minimum (provided continuity assumptions w.r.t. the control such as \eqref{W-uv-2} and \eqref{W-uv-3}) of  $H$ and $h$  over the compact set of controls $U$, instead of taking the essential infimum over $\U$. By the measurable selection theorem (see e.g. \cite{Benes}), an optimal control over $[0,T]$ can be obtained by pasting the minima of $H$ and $h$.  The same remark holds the the zero-sum game problem. 

\item It is possible to characterize the optimal controls $\hat u$ and the equilibrium points $(\hat u,\hat v)$ in terms of a stochastic maximum principle. This approach will be discussed in a future work.

\item The uniform boundedness assumptions imposed on  the functions $f$ and $h$
can be substantially weakened by using subtle arguments on existence and uniqueness of
solutions of  BSDEs which are by now well known in the BSDEs literature.

\end{enumerate}


\section{Appendix }
\begin{proof}[{\bf Proof of Proposition \eqref{L-u-mart}}]  The proof of the uniform integrability of $L^u$  is inspired by the proof of Proposition (A.1) in \cite{EH}.  As mentioned above, it suffices to prove that $E[L^u_T]=1$. For $n\ge 0$, let $\l^n$ be the predictable intensity matrix given by $\l^n_{ij}(t):=\l^u_{ij}(t)\mathbf{1}_{\{\omega,\,\,|x(\omega)|_t\le n\}}$ and let $L^n$ be the associated Dol{\`e}ans-Dade exponential and $P^n$ the positive measure defined by $dP^n=L^n_T dP$.  Noting that, for $i, j\in I, \,i\neq j$, $|i-j|\ge 1$, by (B3) and \eqref{x-square-g}, we have
$$
\underset{i\neq j}\sum\,\l^u_{ij}(t,x,P^u\circ x^{-1}_t,u(t))\le C_T(1+|x|_t+\|\xi\|_2).
$$
Thus, for every $n\ge 1$, $\l^n_{ij}(t)\le C_T(1+n+\|\xi\|_2)$, i.e. $\l^n_{ij}$ is bounded. In view of \cite{bremaud}, Theorem T11, $L^n$ is a $P$-martingale. In particular, $E[L^n_T]=1$ and $P^n$ is a probability measure. By \eqref{x-square-g}, $|x|_T<\infty,\, P$-a.s. Therefore, on the set $\{\omega,\,\,|x(\omega)|_T\le n_0\}$, for all $n\ge n_0$, $L^n_T(\omega)=L_T^u(\omega)$. This in turn yields that 
$L^n_T\to L_T^u,\,\,P$-a.s., as $n\to +\infty$. Now, if $(L^n_T)_{n\ge 1}$ is uniformly integrable, the $P$-a.s. convergence implies $L^1(P)$-convergence of $L^n_T$ to $L_T^u$, yielding  $E[L^u_T]=1$. It remains to show that $(L^n_T)_{n\ge 1}$ is uniformly integrable: 
$$
\lim_{a\to \infty}\sup_{n\ge 1}\int_{\{L^n_T>a\}} L^n_T\,dP=0.
$$
For $m\ge 1$, set $\t_m=\inf\{t\le T,\,\, |x|_t\ge m\}$ if the set is nonempty and $\t_m=T+1$ if it is empty. Denoting by $E^n$ the expectation w.r.t. $P^n$, we have
\begin{equation}\label{tau-m}\begin{array}{lll}
\int_{\{\t_m\le T\}} L^n_T\,dP=P^n(\t_m\le T)=P^n(|x|_T\ge m)\\ \qquad\qquad\qquad\quad \le E^n[|x|_T]/m\le C/m,
\end{array}
\end{equation}
where, by \eqref{x-square-g}, $C$ does not depend on $n$.  

\ms\no Let $\e>0$. Choose $m_0\ge 1$ such that $C/m_0<\e$. We have, for all $n\ge m_0$, $L^n_{T\wedge \t_{m_0}}=L^{m_0}_{T\wedge \t_{m_0}}$. This entails that
\begin{equation*}
\sup_{n\ge 1}\int_{\{L^n_{T\wedge \t_{m_0}}>a\}} L^n_{T\wedge \t_{m_0}}\,dP= \max_{n\le m_0}\int_{\{L^n_{T\wedge \t_{m_0}}>a\}} L^n_{T\wedge \t_{m_0}}\,dP \to 0,\,\, a\to  \infty.
\end{equation*}
So there exists $a_0>0$ such that whenever $a>a_0$, 
\begin{equation}\label{L-m}
\max_{n\le m_0}\int_{\{L^n_{T\wedge \t_{m_0}}>a\}} L^n_{T\wedge \t_{m_0}}\,dP< \e.
\end{equation}
We have
$$
\begin{array}{lll}
\underset{n\ge 1}\sup \int_{\{L^n_T>a\}} L^n_T\,dP\le\underset{n\ge 1}\sup\int_{\{L^n_T>a,\, \t_{m_0}\le T\}} L^n_T\,dP+\underset{n\ge 1}\sup\int_{\{L^n_T>a,\, \t_{m_0}> T\}} L^n_T\,dP \\ \qquad\qquad\qquad\qquad \le \underset{n\ge 1}\sup\int_{\{\t_{m_0}\le T\}} L^n_T\,dP+\underset{n\ge 1}\sup\int_{\{L^n_{T\wedge \t_{m_0}}>a,\, \t_{m_0}> T\}} L^n_{T\wedge \t_{m_0}}\,dP \\ \qquad\qquad\qquad\qquad \le \underset{n\ge 1}\sup\int_{\{\t_{m_0}\le T\}} L^n_T\,dP+\underset{n\le m_0 }\max\int_{\{L^n_{T\wedge \t_{m_0}}>a\}} L^n_{T\wedge \t_{m_0}}\,dP\\
\qquad\qquad\qquad\qquad\le C/m_0+\e < 2\e,
\end{array}
$$
 in view of \eqref{tau-m} and \eqref{L-m}. This finishes the proof since $\e$ is arbitrary.
 \end{proof}

\begin{proof}[{\bf Proof of Proposition \eqref{ess-inf}}]

For $n\ge 0$ let $z_n\in \mathbb{Q}^{I\times I}$, the $I\times I$-matrix with rational entries. Then, since $(t,\omega)\mapsto H(t,\omega,z_n,u)$ is $\mathbb{L}$-measurable, its essential infimum w.r.t. $u\in\U$ is well defined i.e. there exists a $\mathbb{L}$-measurable r.v. $H^n$ such that
\begin{equation}\label{H-n}
H^n(t,x, z_n)=\underset{u\in \U}{\mathrm{ess}\inf\,} H(t,x,z_n,u),\quad dP \times dt\mbox{-a.s.}
\end{equation}
Moreover, there exists a set $\mathcal{J}_n$ of $\U$ such that $(t,\omega)\mapsto \underset{u\in \mathcal{J}_n}{\inf\,} H(t,\omega,z_n,u)$ is $\mathbb{L}$-measurable and 
$$
H^n(t,x,z_n)=\underset{u\in \mathcal{J}_n}{\inf\,} H(t,x,z_n,u), \quad dP \times dt\mbox{-a.s.}
$$
Next,  set $N=\bigcup_{n\ge 0} N_n$, where
$$
N_n:=\{(t,\omega):\,\,H^n(t,\omega)\neq\underset{u\in \mathcal{J}_n}{\inf\,} H(t,\omega,z_n,u)\}.
$$
Then, $dP\otimes dt(N)=0$. \\ 

\noindent We define $H^*$ as follows: For $(t,\omega)\in N^c$  (the complement of $N$),
\begin{equation}\label{ess-inf-J}
H^*(t,x,z)=\left\{\begin{array}{ll}
\underset{u\in \mathcal{J}_n}{\inf\,} H(t,x,z_n,u) & \text{if }\,\, z=z_n\in\mathbb{Q}^{I\times I}, \\ \underset{z_n\to z}{\lim\,\,} \inf_{u\in \mathcal{J}_n} H(t,x,z_n,u) & \text{otherwise }.
\end{array}
\right.
\end{equation}
The last limit exists due to the fact that, for $n\neq m$, we have
$$
\begin{array}{ll}
|\underset{u\in \mathcal{J}_n}{\inf\,} H(t,x,z_n,u) -\underset{u\in \mathcal{J}_m}{\inf\,} H(t,x,z_m,u)|
=|H^*(t,x,z_n)-H^*(t,x,z_m)| \\ \quad \le \underset{u\in \U}{\mathrm{ess}\sup\,}\left|H(t,x,P^u\circ x^{-1}_t,z_n,u)-H(t,x,P^u\circ x^{-1}_t,z_m^,u)\right |\\ \qquad\qquad\le C(1+|x|_t+ \sup_{u\in U}\|P^u\|_2)\|e^{z_n}-e^{z_m}\|_g(t).
\end{array}
$$

\noindent  We now  show that, for every $z\in\R^{I\times I}$,
\begin{equation}\label{H-*}
H^*(t,x,z)=\underset{u\in \U}{\mathrm{ess}\inf\,} H(t,x,z,u),\quad dP \times dt\mbox{-a.s.}
\end{equation}
If $z\in\mathbb{Q}^{I\times I}$, the equality follows from the definitions \eqref{H-n} and \eqref{ess-inf-J}. Assume $z\notin\mathbb{Q}^{I\times I}$ and let $z_n\in\mathbb{Q}^{I\times I}$ such that $z_n\to z$. Further, let $\varphi(t,x_.)$ be a progressively measurable process such that $\varphi(t,x_.)\le H(t,x_.,z,u)$ for all $u\in\U$. Thus, for every $\e>0$ there exists $n_0\ge 0$ such 
$$
\begin{array}{ll}
\varphi(t,x)\le H(t,x,z_n,u)+\e,\quad n\ge n_0, \,\, u\in\U.
\end{array}
$$
Therefore, $\varphi(t,x_.)\le H^*(t,x_.,z_n)+\e,\,\,n\ge n_0$. Letting $n\to \infty$, we obtain
$\varphi(t,x_.)\le H^*(t,x_.,z)+\e$. Sending $\e$ to $0$, we finally get $\varphi(t,x_.)\le H^*(t,x_.,z)$, i.e.
$$
\underset{u\in \U}{\mathrm{ess}\inf\,} H(t,x,z,u)\le H^*(t,x,z),\quad dP \times dt\mbox{-a.s.}
$$
On the other hand, in view of \eqref{ess-inf-J} and the linearity of $H$ in $z$, we have $H^*(t,x,z)\le H(t,x,z,u),\,\, u\in\U$. Thus,
\begin{equation*}
H^*(t,x,z)\le \underset{u\in \U}{\mathrm{ess}\inf\,}H(t,x,z,u).
\end{equation*}
This finishes the proof of \eqref{H-*}.
\end{proof}

\begin{bibdiv}
\begin{biblist}

\bib{Benes}{article}{
   author={Bene{\v{s}}, V. E.},
   title={Existence of optimal stochastic control laws},
   journal={SIAM J. Control},
   volume={9},
   date={1971},
   pages={446--472},
}

\bib{BGL}{article}{
   author={Bobkov, S.},
   author={Gentil, I.},
   author={Ledoux, M.},
   title={Hypercontractivity of Hamilton-Jacobi equations.},
   journal={J. Math. Pures Appl.},
   volume={80},
   number={7},
   date={2003},
   pages={669--696},
}
\bib{bremaud}{book}{
    author={Br\`emaud, Pierre},
   title={Point Processes and Queues: Martingale Dynamics},
  year={1981},
  publisher={Springer-Verlag, Berlin},
}

 \bib{Chen}{book}{
  author={Chen, Mu-Fa},  
  title={From Markov chains to non-equilibrium particle systems},
  year={2004},
  publisher={World Scientific}

}	
\bib{Cohen2012}{article}{
   author={Cohen, S.N.},
   author={Elliott, R.J.},
   title={Existence, Uniqueness and Comparisons for BSDEs in General Spaces},
   journal={Annals of Probability},
   volume={40(5)},
   date={2012},
   pages={ 2264--2297},
 }
 \bib{Cohen2015}{book}{
 author={Cohen, S.N.},
   author={Elliott, R.J.},
  title={Stochastic calculus and applications},
  year={2015},
  publisher={Birkh{\"a}user}

   }   	
\bib{choutri1}{article}{
   author={Choutri, Salah Eddine},
   author={Djehiche, Boualem},
   author={Tembine, Hamidou}
   title={Optimal control and zero-sum games for Markov chains of mean-field type},
   journal={ Preprint: arXiv:1606.04244 [math.OC]},
   date={2016},
}   

\bib{DZ}{article}{
  title={Law of large numbers and central limit theorem for unbounded jump mean-field models},
  author={Dawson, Donald}
  author={Zheng, Xiaogu},
  journal={Advances in Applied Mathematics},
  volume={12},
  number={3},
  pages={293--326},
  year={1991},
  publisher={Elsevier}
}   	
\bib{DH}{article}{
   author={Djehiche, Boualem},
   author={Hamad\`ene, Said },
   title={Optimal control and zero-sum stochastic differential game problems of mean-field type},
   journal={ Preprint: arXiv:1603.06071 [math.PR]},
   date={2016},
 }
 \bib{DK}{article}{
 title={The rate function for some measure-valued jump processes},
  author={Djehiche, Boualem},
  author={ Kaj, Ingemar},
  journal={The Annals of Probability},
  pages={1414--1438},
  year={1995},
  publisher={JSTOR}
}
   \bib{DS}{article}{
   title={Large deviations for hierarchical systems of interacting jump processes},
  author={Djehiche, Boualem},
  author={Schied, Alexander}
  journal={Journal of Theoretical Probability},
  volume={11},
  number={1},
  pages={1--24},
  year={1998},
  publisher={Springer}
  }
\bib{Ekeland}{article}{
author={Ekeland, Ivar}, title={ On the variational principle},
journal={J. Math. Anal. Appl.},
   volume={47},
   date={1974},
   pages={324--353},

}
\bib{EH}{article}{
   author={El Karoui, Nicole},
   author={Hamad\`ene, Said},
   title={BSDEs and risk-sensitive control, zero-sum and nonzero-sum game problems of stochastic functional differential equations},
   journal={ Stochastic Processes and their Application},
   volume={107},
   date={2003},
   pages={145-169},
   
   }	
\bib{EPQ}{article}{
   author={El Karoui, Nicole},
   author={Peng, Shige},
   author={Quenez, Marie-Claire}
   title={Backward stochastic differential equations in finance},
   journal={ Mathematical Finance},
   volume={7(1)},
   date={1997},
   pages={1-71},
	
	}
\bib{Kolo}{book}{
  title={Nonlinear Markov processes and kinetic equations},
  author={Kolokoltsov, Vassili N},
  volume={182},
  year={2010},
  publisher={Cambridge University Press}
}

\bib{Elliott}{article}{
   author={Elliott, R. J.},
   author={Kohlmann, M.},
   title={The variational principle and stochastic optimal control},
   journal={Stochastics},
   volume={3},
   date={1980},
   pages={229--241},
  }
\bib{EK}{book}{
    author={Ethier, Stewart N.},
    author={ Kurtz, Thomas G.},
   title={Markov processes: characterization and convergence.},
  volume={282},
  year={2009},
  publisher={John Wiley \& Sons},
}
\bib{Fe}{article}{
title={Large deviations for empirical process of mean-field interacting particle system with unbounded jumps},
  author={Feng, Shui},
  journal={The Annals of Probability},
  pages={2122--2151},
  year={1994},
  publisher={JSTOR}
}
\bib{FZ}{article}{
  title={Solutions of a class of nonlinear master equations},
  author={Feng, Shui},
  author={Zheng, Xiaogu},
  journal={Stochastic processes and their applications},
  volume={43},
  number={1},
  pages={65--84},
  year={1992},
  publisher={Elsevier}
}
\bib{Ham-Lepl95}{article}{
   author={Hamad{\`e}ne, S.},
   author={Lepeltier, J. P.},
   title={Backward equations, stochastic control and zero-sum stochastic
   differential games},
   journal={Stochastics Stochastics Rep.},
   volume={54},
   date={1995},
   number={3-4},
   pages={221--231},
}
\bib{Jacobson}{article}{
  title={Optimal stochastic linear systems with exponential performance criteria and their relation to deterministic differential games},
  author={Jacobson, David},
  journal={IEEE transactions on automatic control},
  volume={18},
  number={2},
  pages={124--131},
  year={1973},
  publisher={IEEE}
  
  }

\bib{OV}{article}{
   author={Otto, F.},
   author={Villani, C.},
   title={ Generalization of an inequality by Talagrand, and links with the loga-
rithmic Sobolev inequality},
   journal={J. Funct. Anal.},
   volume={173},
   date={2000},
   pages={361--400},

}

\bib{royer}{article}{
  author={Royer, Manuela},
  title={Backward stochastic differential equations with jumps and related non-linear expectations},
  journal={Stochastic processes and their applications},
  volume={116},
  number={10},
  pages={1358--1376},
  year={2006},
  publisher={Elsevier}
  }

\bib{Leo1}{article}{
title={Some epidemic systems are long range interacting particle systems},
  author={L{\'e}onard, Christian},
  journal={ Stochastic processes in epidemic systems (eds. J.P. Gabriel et al.), Lecture Notes in Biomathematics},
   volume={86},
  year={1990},
  publisher={Springer}
  }
\bib{Leo2}{article}{
  title={Large deviations for long range interacting particle systems with jumps},
  author={L{\'e}onard, Christian},
  journal={Annales de l'IHP Probabilit{\'e}s et statistiques},
  volume={31},
  number={2},
  pages={289--323},
  year={1995}
  
}
  \bib{NP}{book}{ 
 author={Nicolis, Gr{\'e}goire},
author={Prigogine, Ilya},
   title={Self organization in non-equilibrium systems},
  year={1977},
  publisher={Wiley-Interscience, New York},	
  }
  
\bib{Oel}{article}{
  title={A martingale approach to the law of large numbers for weakly interacting stochastic processes},
  author={Oelschl{\"a}ger, Karl},
  journal={The Annals of Probability},
  pages={458--479},
  year={1984},
  publisher={JSTOR}
}

\bib{pardoux}{article}{
   author={Pardoux, E.},
   author={Peng, S.},
   title={Adapted Solution of a Backward Stochastic Differential Equation},
   journal={ Systems and  Control Letters},
   volume={14},
   date={1990},
   pages={55-61},

}
\bib{RW}{book}{
    author={Rogers, L Chris G},
    author={Williams, David},
        title={Diffusions, Markov Processes and Martingales-Volume 2: It\^o Calculus.},
  year={2000},
  publisher={Cambridge University Press},

}	
\bib{Sch}{article}{
title={Chemical reaction models for non-equilibrium phase transitions},
  author={Schl{\"o}gl, Friedrich},
  journal={Zeitschrift f{\"u}r Physik},
  volume={253},
  number={2},
  pages={147--161},
  year={1972},
  publisher={Springer}
  
  }

\end{biblist}
\end{bibdiv}

\end{document}